\def\ps@pprintTitle{%
  \let\@oddhead\@empty
  \let\@evenhead\@empty
  \let\@oddfoot\@empty
  \let\@evenfoot\@empty
}
  \def\corref#1{}%
  \def\cortext#1#2{}%
  \def\fnref#1{}%
  \def\tnoteref#1{}%
  \def\cnotenum#1{}%
  \def\cnotenumb#1{}%
\long\def\tmpcomment#1\endtmpcomment{#1}
\DeclareMathOperator{\sign}{sgn}
\newtheorem{theorem}{Theorem}
\newtheorem{proposition}{Proposition}
\newtheorem{lemma}{Lemma}
\theoremstyle{definition}
\newtheorem{definition}{Definition}
\theoremstyle{remark}
\newtheorem{remark}{Remark}
\newtheoremstyle{exampleStyle}
  {\topsep}   
  {\topsep}   
  {\normalfont}  
  {}          
  {\bfseries} 
  {.}         
  {1em}       
  {}          
\theoremstyle{exampleStyle}
\newtheorem{example}{Example}[section]
\journal{Computer-Aided Design, Elsevier}
\newcommand{\bluenew}[1]{#1}
\newcommand{\lastfix}[1]{#1}
\begin{document}

\begin{frontmatter}



\title{Quasi-Symmetric Nets: A Constructive Approach to the Equimodular Elliptic Type of Kokotsakis Polyhedra\\[-0.35cm]}






\author[label1]{A. Nurmatov}  
\ead{abdukhomid.nurmatov@kaust.edu.sa}

\author[label1]{M. Skopenkov}  
\ead{mikhail.skopenkov@gmail.com}

\author[label1]{F. Rist}  
\ead{florian.rist@kaust.edu.sa}

\author[label1]{J. Klein}  
\ead{jonathan.klein@kaust.edu.sa}

\author[label1]{D. L. Michels
}  
\ead{dominik.michels@kaust.edu.sa}


\affiliation[label1]{organization={KAUST Computational Sciences Group},
            addressline={Campus Building 1}, 
            city={Thuwal},
            postcode={23955-6900}, 
            country={KSA\\[-1.25cm]}}


\begin{abstract}
\bluenew{A Kokotsakis polyhedron is a polyhedral mesh in three-dimensional
Euclidean space formed by a central $n$-gonal face (the base), $n$ quadrilateral faces each sharing one edge with the base, and $n$ triangular faces inserted between every two adjacent quadrilaterals; it is called flexible if it admits a continuous deformation that preserves the rigidity of every face.} This work investigates flexible Kokotsakis polyhedra with a quadrangular base \bluenew{($n=4$)} of \emph{equimodular elliptic} type, filling a significant gap in the literature by providing the first \emph{explicit constructions} of this type together with an \emph{explicit algebraic characterization} in terms of flat and dihedral angles. A straightforwardly constructible class of polyhedra -- called \emph{quasi-symmetric nets} (\emph{QS-nets}) -- is introduced, characterized by a symmetry relation among flat angles. It is shown that every elliptic QS-net has equimodular elliptic type and is flexible in real three-dimensional Euclidean space (rather than only in complex configuration spaces), except for a few exceptional choices of dihedral angles, and that its flexion admits a closed-form parameterization. Examples are constructed that are non-self-intersecting and belong \emph{exclusively} to the equimodular elliptic type. To support applications in computational geometry, a numerical pipeline is developed that searches for candidate solutions, verifies them using the explicit algebraic characterization, and constructs and visualizes the resulting polyhedra; numerical validations achieve high precision. Taken together, these results provide constructive criteria, algorithms, and validated examples for the equimodular elliptic type, enabling the design of a broad range of flexible Kokotsakis mechanisms.\\[-0.4cm]
%
%
%
%
\end{abstract}



\begin{keyword}
Equimodular elliptic class \sep elliptic function 
\sep explicit construction
\sep flexible net \sep Kokotsakis polyhedron \sep quasi-symmetric net.
\end{keyword}
\end{frontmatter}


\section{Introduction and Prior Work}
\label{sec:intro}
The increasing interest in flexible and deployable structures stems from their broad utility across fields like robotics, solar cells, meta-materials, art, and architecture. In architecture, these structures simplify construction, enable innovative designs, and enhance building functionality. Notable examples include Santiago Calatrava Valls’ projects such as the Florida Polytechnic University, the UAE Pavilion at Expo Dubai, and the Quadracci Pavilion in Milwaukee. Norman Foster’s Bund Finance Center also showcases such principles. While architects typically rely on simple mechanisms, we explore a wider range of mechanisms with potential for architectural design. Unlike traditional kinematic frame structures (\nocite{escrig1993,calatrava1981,li2020}see \cite{escrig1993}--\cite{li2020}), our flexible polyhedral mechanisms enable full surface coverage, making them suitable for diverse applications. Such deployable structures, particularly those that transition into flat or curved shapes, are gaining attention for their potential to optimize construction. Recent work includes rigid-foldable origami (\nocite{demaine-book,evans2015,evans2016,ciang-2019-cf,song-2017,Tac09a,Tac10a,tachi-2010,tachi-2011-onedof,tachi-2013-composite}see \cite{demaine-book}--\cite{tachi-2013-composite}) and programmable meta-materials (\nocite{Callens2018,dieleman2020,Dudte2016,auxetic-2022,Konakovic:2016:BDC:2897824.2925944,konakovic2018,silverberg2014}see \cite{Callens2018}--\cite{silverberg2014}), which often involve mechanisms with multiple degrees of freedom.

The study of flexible polyhedra -- polyhedral structures capable of continuous isometric deformations while their faces remain rigid -- has long been an active area of research at the interface of geometry, kinematics, and mechanics. Interest in such mechanisms is driven both by their intrinsic mathematical appeal and by their applications in architectural geometry, deployable structures, and origami-inspired design. Since Cauchy's classical rigidity theorem~\cite{Cauchy1813} established that convex polyhedra are rigid, research has focused on identifying and characterizing non-convex configurations that admit flexibility. Bricard's seminal classification of flexible octahedra~\cite{Bricard1897} not only revealed the existence of such extraordinary mechanisms but also introduced algebraic methods that remain central to the field.

Within this context, \emph{Kokotsakis polyhedra} form one of the most fundamental families of flexible mechanisms, named after Antonios Kokotsakis, who first studied them in the 1930s~\cite{kokotsakis33}. A Kokotsakis polyhedron is a polyhedral surface in $\mathbb{R}^3$ consisting of one $n$-gon (the base), $n$ quadrilaterals attached to each side of the $n$-gon, and $n$ triangles placed between every two consecutive quadrilaterals. \bluenew{See Figure~\ref{fig:pic000ab}}. While generically rigid, certain configurations of Kokotsakis polyhedra admit continuous flexion. Kokotsakis derived necessary and sufficient conditions for infinitesimal flexibility and described several flexible subclasses.


\begin{figure}[!t]
    \centering
    \includegraphics[width=0.4\textwidth]{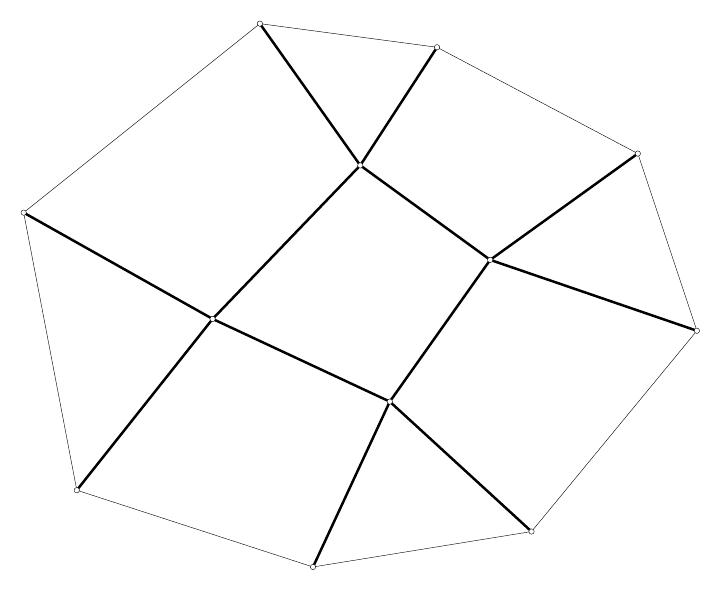}
    \hfill 
    \includegraphics[width=0.46\textwidth]{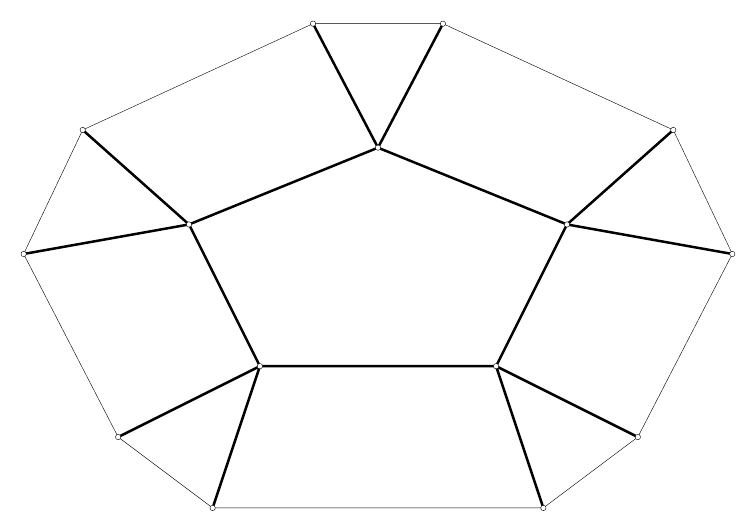}
    \caption{\bluenew{Examples of Kokotsakis polyhedra with (left) a quadrangular base ($n=4$) and (right) a pentagonal base ($n=5$).}}
    \label{fig:pic000ab}
\end{figure}

The quadrangular case ($n=4$) has been the subject of extensive investigation, starting with early contributions by Sauer and Graf~\cite{sauer-graf} and later by Stachel and collaborators (\nocite{stachel2010,nawratil-2011,nawratil-2012}see \cite{stachel2010}--\cite{nawratil-2012}), who employed resultant-based algebraic methods to classify several subclasses. Infinitesimal flexibility was addressed by Bobenko, Hoffmann, Schief \cite{Schief2008}, and Karpenkov \cite{karpenkov-2010}.
The culmination of this line of work is the comprehensive classification by Izmestiev~\cite{izmestiev-2017}, who established eight principal classes of flexible Kokotsakis polyhedra and showed that every flexible quadrangular Kokotsakis polyhedron belongs to one of them (possibly after switching boundary strips): orthodiagonal, isogonal, equimodular, conjugate-modular, linear compound, linearly conjugate, chimera, and trivial class.

Since the classification of flexible polyhedra is complicated~\cite{LiuEtAl2024}, it is natural to search for polyhedra that are approximately flexible with high tolerance, which is enough for any practical applications. Beautiful examples were constructed by Gor’kavyi and Milka~\cite{Milka}. A general approach was developed in a recent series of papers~\nocite{quadmech-2024, pirahmad2024, pirahmad2025, yorov2025}\cite{quadmech-2024}--\cite{yorov2025}, in which approximately flexible polyhedra were constructed by optimizing their analogs in isotropic geometry.

The detailed investigation of Izmestiev's classes is ongoing. Each class in Izmestiev's framework is subdivided into subclasses, and several open questions remain unresolved. First, it is not guaranteed that every subclass contains realizable examples (flexible Kokotsakis polyhedra). Second, even when examples exist, flexibility is often established only in~$\mathbb{C}^3$, not necessarily in~$\mathbb{R}^3$. Third, realizable examples in $\mathbb{R}^3$ may self-intersect, 
raising the question of if 
each subclass admits non-self-intersecting flexible polyhedra. Finally, since some classes intersect with each other, an important question is if 
there exist examples belonging exclusively to a given class. Addressing these questions begins with constructing explicit examples within each class.

Progress has been made in several directions. The orthodiagonal involutive type, first studied by Sauer and Graf~\cite{sauer-graf} as the \emph{T-surfaces}, or \emph{T-nets}, and the orthodiagonal antiinvolutive type, analyzed by Erofeev and Ivanov~\cite{ErofeevIvanov2020}, provide explicit examples and parametrizations within the orthodiagonal class, confirming conjectures posed by Stachel. Recent work has extended this class to Kokotsakis meshes with skew (non-planar) faces, see Aikyn et al.~\cite{AikynEtAl2024a} and Liu et al.~\cite{LiuEtAl2024}. Flexible polyhedra of the isogonal class were also identified in~\cite{sauer-graf} as \emph{discrete Voss surfaces}, or \emph{V-nets}. These advances have yielded examples for the first two classes (orthodiagonal and isogonal) in Izmestiev's classification. \bluenew{In addition, other variations, such as the equimodular conic type and linear compound types, have been actively explored in recent literature; for instance, through the classification of pluripotent tiling patterns \cite{dieleman2020}, the kinematic analysis of quadrilateral creased papers \cite{he2020rigid, he2026infinitely}, and the compatibility of rigid-foldable polygons \cite{hu2026lorentz}.} However, until now, no explicit examples have been known for 
the equimodular \bluenew{elliptic} type.

This paper focuses on the \emph{equimodular elliptic type}, a subclass of the equimodular class identified by Izmestiev~\cite{izmestiev-2017}.
This subclass is defined in terms of Jacobi elliptic functions and believed to have the maximal number 
of degrees of freedom. 
Our work contributes new constructive and analytical results, thereby advancing the understanding and practical realization of this class of flexible mechanisms and addressing the open problems described above. For the first time, we present an explicit construction of such polyhedra, thereby demonstrating that the equimodular elliptic class is non-empty.
Namely, we introduce an 
easily constructible class of \emph{quasi-symmetric nets}, or \emph{QS-nets}, defined by equality and complementarity (to $\pi$) of certain flat angles as shown in \bluenew{Figure~\ref{fig:pic000}}.
In Theorem~\ref{main_th:th1}, we show that under a simple additional condition (ellipticity), all QS-nets belong to the equimodular elliptic class, are flexible in $\mathbb{R}^3$ besides a few exceptional values of dihedral angles, 
and admit a simple closed-form expression for the flexion.


\begin{figure}[!t]
        \centering
        \includegraphics[width=0.6\textwidth]{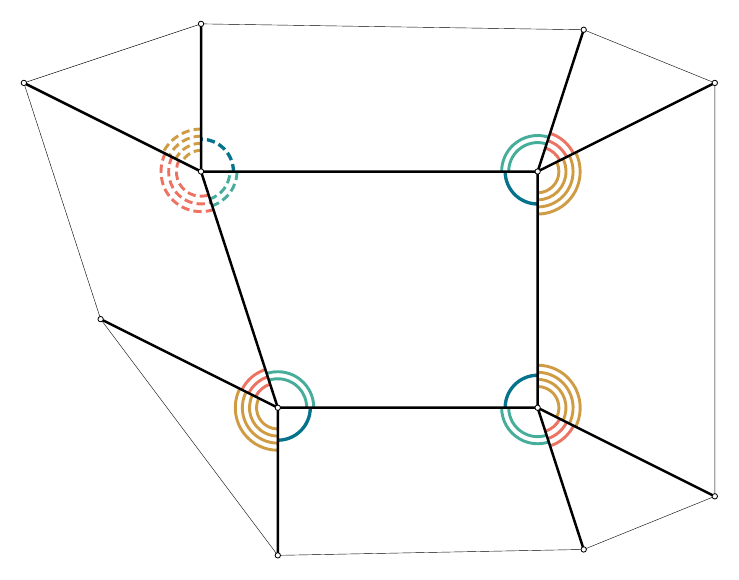}
        \caption{A QS-net is a particular Kokotsakis polyhedron with quadrangular base. The flat angles of the same \bluenew{solid arc color (or arc count) are equal; dashed arcs represent supplementary angles of their solid counterparts.} Such a net is \protect\lastfix{flexible for almost any choice of such flat angles.}}
         \label{fig:pic000}
\end{figure}

Furthermore, we introduce a construction with the dihedral rather than flat angles as design parameters, allowing for better control of the visual
appearance of 
polyhedra. 
In Proposition~\ref{main_th:prop1}, we derive an algebraic characterization 
of equimodular elliptic type in terms of flat and dihedral angles.

Beyond existence, we 
give examples that are flexible in $\mathbb{R}^3$, are non-self-intersecting, and belong exclusively to the equimodular elliptic type, even after switching boundary strips. We further use Proposition~\ref{main_th:prop1} to obtain numerical examples with tolerance $10^{-12}$ or better, sufficient for applications in real-world mechanism design. To complement the theoretical developments, we introduce computational algorithms that (i) search for and verify candidate solutions to the existence system with desired tolerance, (ii) construct and visualize the resulting polyhedra with interactive flexion, and (iii) guide the physical construction of models.

In summary, the contributions of this work are:
\begin{itemize}
    \item the construction of the first explicit closed-form examples of flexible (in $\mathbb{R}^3$), non-self-intersecting polyhedra that belong exclusively to equimodular elliptic type;
    \item 
    an explicit algebraic characterization 
    of equimodular elliptic type; 
    \item the development of numerical algorithms for 
    search, verification, and visualization of such polyhedra;
    \item the construction of the numerical examples of polyhedra of equimodular elliptic type with desired tolerance.
\end{itemize}

The paper is organized as follows. Section~\ref{sec:notation} introduces the necessary notation
and 
states the main results,  Theorem~\ref{main_th:th1} and Proposition~\ref{main_th:prop1}. Section~\ref{sec:prexcrit} contains the proofs of these results. 
Section~\ref{sec:ex} presents the numerical search algorithm and reports closed-form and numerical examples. We have produced a video capturing the flexions; see the supplementary material (\texttt{QS-Nets.mp4})~\cite{Nurmatov2025Repo}.
Section~\ref{sec:disc} discusses implications for further research on flexible polyhedra and potential CAD applications. 
A brief conclusion in Section~\ref{sec:concl} closes the paper.

\section{Notation and Main Results}
\label{sec:notation}

\bluenew{Before diving into the rigorous mathematical framework, we provide an informal roadmap of our main results and elaborate on the motivation for introducing them. Constructing examples of the equimodular elliptic type directly from its formal definition is non-trivial, whether approached numerically or explicitly. Consequently, our objective was not merely to find a single isolated example, but rather to develop a systematic framework that offers greater design versatility -- specifically, the ability to prescribe the dihedral angles and the flat angles of the base face. This goal directly motivated the algebraic characterization formulated in Proposition~\ref{main_th:prop1}.}

\bluenew{The central challenge in designing flexible Kokotsakis polyhedra lies in satisfying a dense system of algebraic compatibility conditions. As we formalize later in Proposition~\ref{main_th:prop1}, characterizing a general flexible mesh of the equimodular elliptic type requires solving a highly non-linear system of algebraic equations. 
Although these general 
equations can be satisfied numerically, obtaining explicit, closed-form solutions for valid flat angles remains notoriously difficult. Therefore, a practical strategy is to seek specific geometric configurations that inherently enforce these conditions.} 

\bluenew{The concept of Quasi-Symmetric nets (QS-nets) was discovered directly through this algebraic challenge. \lastfix{Specifically, by calculating some numerical solutions to the system of Proposition~\ref{main_th:prop1}, we observed highly structured symmetries among the resulting flat angles. Imposing these symmetries forces complex cross-terms in the compatibility equations to naturally cancel out. We used these observed symmetries to construct a general family of examples, introduced in Theorem~\ref{main_th:th1} as the QS-net.}
In short, Theorem~\ref{main_th:th1} states that by focusing just on one vertex of the base face, fixing its corresponding base angle to exactly $90^\circ$, and freely choosing the three other independent flat angles around this vertex, the entire 16-angle polyhedron is immediately determined by these three flat angles via the symmetry and complementarity relations shown in Figure~\ref{fig:pic000}, yielding an explicitly flexible Kokotsakis polyhedron.}

\bluenew{To build concrete intuition for this mechanism before navigating the technical proofs, Figure~\ref{dfig:pic001} visualizes the continuous folding motion of a QS-net (the specific parameters for this mesh are detailed later in Example~\ref{ex:ex1}). The remainder of this section, along with Section 3, establishes the rigorous definitions and algebraic proofs required to validate these claims.}

\begin{figure}[htbp!]
        \centering
        \includegraphics[width=0.6\textwidth]{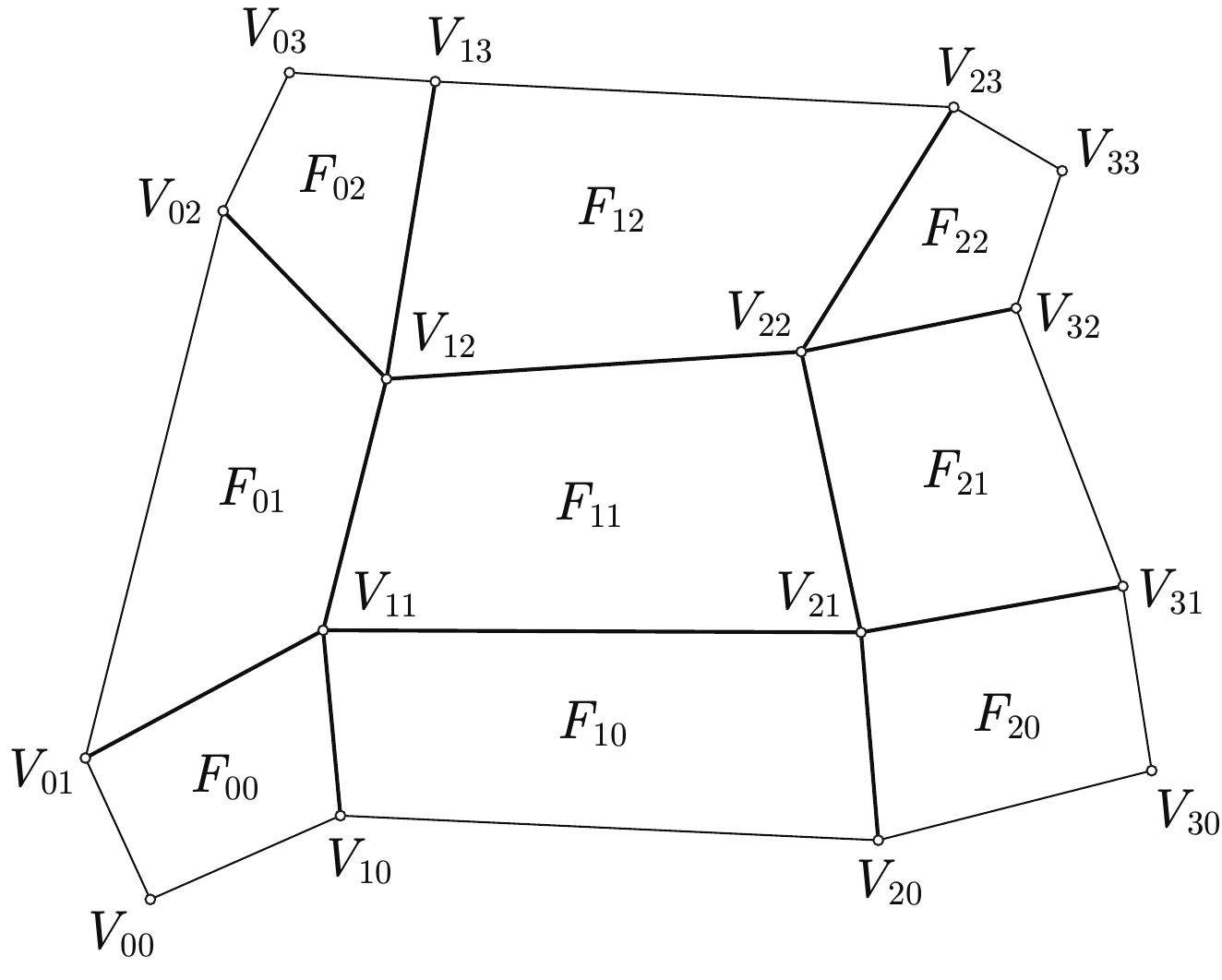}
    \caption{Illustration of a $3 \times 3$ net.}
    \label{fig:pic001}
\end{figure}

Let us introduce the main notions, notation, and state the main results.

An $m \times n$ \emph{net} (\emph{with convex faces}) is a collection of $(m+1)(n+1)$ points $V_{ij} \in \mathbb{R}^3$, indexed by integers $0 \leq i \leq m$ and $0 \leq j \leq n$ such that, for all $0 \leq i < m$ and $0 \leq j < n$, the points $V_{ij}, V_{i+1,j}, V_{i+1,j+1}, V_{i,j+1}$ form the consecutive vertices of a convex quadrilateral, denoted $F_{ij}$. See Figure~\ref{fig:pic001}. (See \cite{Mor20} for an elementary introduction to indexed/labeled polyhedra.)

An $m \times n$ net $V_{ij}$ is \emph{flexible} (in $\mathbb{R}^3$) if belongs to a continuous family (called \emph{flexion}) of $m \times n$ nets $V_{ij}(t)$, where 
\lastfix{$t \in [0, \epsilon)$} 
for some $\epsilon > 0$, such that all corresponding quadrilaterals $F_{ij}(t)$ are congruent to $F_{ij}(0)$ for all \lastfix{$t \in [0, \epsilon)$}, 
$V_{ij}(0)$ is the original net $V_{ij}$, and the nets $V_{ij}(t)$ are pairwise non-congruent.

In what follows, we consider only $3\times 3$ nets and call them simply \emph{polyhedra}. We omit corner points $V_{00}$, $V_{30}$, $V_{03}$, and $V_{33}$, 
irrelevant for the flexibility.

We denote the vertices $A_2:=V_{11}$, $B_2:=V_{10}$, $C_2:=V_{01}$, etc. as shown in Figure~\ref{fig:pic002}. Denote also $A_0:=A_4$ and $A_5:=A_1$. By \emph{flat angles} of the $3\times 3$ net we mean $\alpha_i = \angle A_{i-(-1)^i}A_iB_i$, $\beta_i = \angle B_iA_iC_i$, $\gamma_i = \angle C_iA_iA_{5-i}$, $\delta_i = \angle A_{i-1}A_iA_{i+1}$, where $i=1, \ldots, 4$. By definition, $\alpha_i$, $\beta_i$, $\gamma_i$, $\delta_i \in (0, \pi)$.


\begin{figure}[htbp!]
        \centering
        \includegraphics[width=0.6\textwidth]{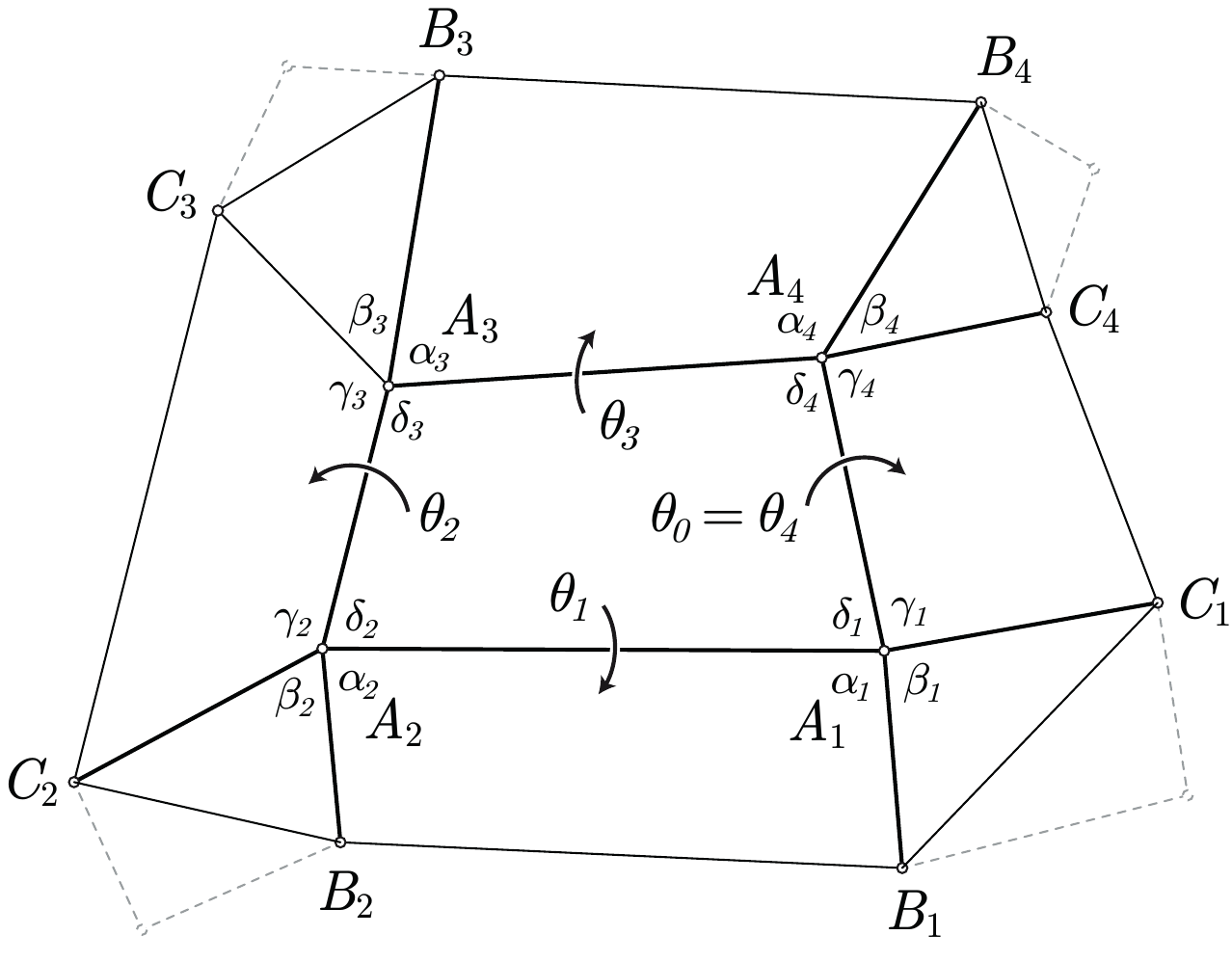}
    \caption{Notation for the vertices, flat and dihedral angles of a $3 \times 3$ net without corners.} 
   \label{fig:pic002}
\end{figure}



Introduce oriented normals $\vv{n_0} :=\vv{A_2A_1}\times\vv{A_2A_3}$, $\vv{n_1} :=\vv{A_2B_2}\times\vv{A_2A_1}$, $\vv{n_2} :=\vv{A_2A_3}\times\vv{A_2C_2}$, $\vv{n_3} :=\vv{A_3A_4}\times\vv{A_3B_3}$, $\vv{n_4} :=\vv{A_1C_1}\times\vv{A_1A_4}$. For each $i = 1, \ldots, 4$, the (\emph{oriented}) \emph{dihedral angle} of the $3\times 3$ net is defined as
\begin{align*}
 \theta_i:=
    \begin{cases}
     \angle \left(\vv{n_0}, \vv{n_i}\right) \in (0, \pi), &\text{if }  \det\left(\vv{n_i},\vv{n_0},\vv{A_{i+1}A_i}\right) > 0,\\
     -\angle \left(\vv{n_0}, \vv{n_i}\right) \in [-\pi, 0], &\text{if }  \det\left(\vv{n_i},\vv{n_0},\vv{A_{i+1}A_i}\right)\leq 0.
    \end{cases}
\end{align*}
Although the theoretical framework presented is valid for dihedral angles in the full range $[-\pi, \pi)$, 
we consider the case where $\theta_i \in (0, \pi)$, unless otherwise explicitly indicated. Furthermore, we define $\theta_0 := \theta_4$
(see Figure~\ref{fig:pic002}).

\begin{definition}\label{def:qs}
A $3\times 3$ net is \emph{quasi-symmetric}, or a \emph{QS-net}, if its flat angles satisfy the relations (see Figure~\ref{fig:pic000})
\begin{equation*}
\begin{aligned}
    &\alpha_1 = \alpha_4 = \delta_2 = \pi - \delta_3,
    &\qquad &\beta_1 = \beta_4 = \gamma_2 = \pi -  \gamma_3,
    \\
    &\gamma_1 = \gamma_4 = \beta_2 = \pi -  \beta_3,
    &\qquad &\delta_1 = \delta_4 = \alpha_2 = \pi - \alpha_3.
\end{aligned}
    \label{eq:QS} \tag{1}
\end{equation*}
\end{definition}

A polyhedron has \emph{elliptic type} if
\begin{gather*}
\alpha_i \pm \beta_i \pm \gamma_i \pm \delta_i \neq 0 \, (\operatorname{mod}{2\pi}) \label{eq:N.0} \tag{2}
\end{gather*}
for all choices of the signs \(\pm\) and each $i=1, \ldots, 4$. In what follows, we consider polyhedra of elliptic type only, unless otherwise explicitly indicated, and call them \emph{elliptic polyhedra}. We introduce the notation:
\begin{align*}\label{eq:N.1}\tag{3}
\left.
\begin{aligned}
&\sigma_i := \frac{\alpha_i + \beta_i + \gamma_i + \delta_i}{2},
&&\overline{\alpha}_i := \sigma_i - \alpha_i,
&&
\overline{\beta}_i := \sigma_i - \beta_i,
\\
&\varepsilon_i:= \sign{(\sin{\sigma_i})},
&&\overline{\gamma}_i := \sigma_i - \gamma_i,
&&
\overline{\delta}_i := \sigma_i - \delta_i.
\end{aligned}
\right.
\end{align*}
Next, we define
\begin{align*}\label{eq:N.2}\tag{4}
    \left.
    \begin{aligned}
    &a_i := \frac{\sin \alpha_i}{\sin \overline{\alpha}_i},
&&
b_i := \frac{\sin \beta_i}{\sin \overline{\beta}_i},
&&
c_i := \frac{\sin \gamma_i}{\sin \overline{\gamma}_i},
&&
d_i := \frac{\sin \delta_i}{\sin \overline{\delta}_i},
\\
&M_i := a_i \, b_i \, c_i \, d_i,
&&
r_i := a_i d_i,
&&
s_i := c_i d_i,
&&
f_i := a_i c_i,
\\
&u_i := 1 - M_i,
&&
x_i := \frac{1}{r_i - 1},
&&
y_i := \frac{1}{s_i - 1},
&&
z_i := \frac{1}{f_i - 1}.
    \end{aligned}
    \right.
\end{align*}

A polyhedron has \emph{equimodular elliptic type} if it has elliptic type and the following three conditions hold:
\begin{itemize}
    \item[(1)] The vertices have equal \emph{moduli}, that is,
    \begin{gather*}
        M_1 = M_2 = M_3 = M_4 =: M. \label{eq:N.3} \tag{5}
    \end{gather*}
    \item[(2)] The \emph{amplitudes} at common vertices match, that is,
    \begin{gather*}
        r_1 = r_2,
        \quad
        s_1 = s_4,
        \quad
        r_3 = r_4,
        \quad
        s_2 = s_3.
         \label{eq:N.4} \tag{6}
    \end{gather*}
    \item[(3)] The sum of \emph{phase shifts} is a \emph{period}, that is, for some $e_1$, $e_2$, $e_3 \in \left\{+1, -1\right\}$ we have
    \begin{align*}\label{eq:N.5} \tag{7}
        &t_1 + e_1t_2 + e_2t_3 + e_3t_4 \in \Lambda.
    \end{align*}
\end{itemize}

    In the latter equation, we use the following notation. First, we introduce the \emph{elliptic modulus}
\begin{gather*}
        k:=
    \begin{cases}
        \sqrt{1-M}, & \text{if } M < 1, \\
        \sqrt{1-M^{-1}}, & \text{if } M > 1.
    \end{cases}
    \label{eq:N.7} \tag{8}
    \end{gather*}
    The \emph{basis quater periods} 
    are
\begin{align*}\label{eq:N.6}\tag{9}
\left.
\begin{aligned}
    &K:= K(k) = \int_{0}^{1} \frac{dx}{\sqrt{(1-x^2)(1-k^2x^2)}}, \\
    &K':= K(k') = \int_{0}^{1} \frac{dx}{\sqrt{(1-x^2)(1-k'^2x^2)}},
\end{aligned}
\right.
\end{align*}
where \(k' = \sqrt{1 - k^2}\).
Then we introduce the lattice
    \begin{align*}
        &\Lambda :=
    \begin{cases}
        \{4Km + 2\mathbf{i}K'n: \ m, n \in \mathbb{Z}\}, & \text{if } M < 1, \\
        \{4Km + (2K + 2\mathbf{i}K')n: \ m, n \in \mathbb{Z}\}, & \text{if } M > 1,
    \end{cases}
    \end{align*}
where $\mathbf{i}$ (in bold font) is the imaginary unit.


    The \emph{amplitudes} \(p_i\) and \(q_i\) are given by
\begin{align*}\label{eq:N.8}\tag{10}
    p_i:= \sqrt{r_i - 1} \in \mathbb{R}_{>0} \cup \mathbf{i} \mathbb{R}_{>0},
\quad
q_i:= \sqrt{s_i - 1} \in \mathbb{R}_{>0} \cup \mathbf{i} \mathbb{R}_{>0}.
\end{align*}

    Finally, the \emph{phase shift} $t_i$ is given by the following formula (up to slight ambiguity to be resolved in a moment):
\begin{gather*}
        \operatorname{dn}{t_i} :=\operatorname{dn}{(t_i, k)} =
    \begin{cases}
        \sqrt{f_i}, & \text{if } M < 1, \\
        \dfrac{1}{\sqrt{f_i}}, & \text{if } M > 1.
    \end{cases}
    \label{eq:N.9} \tag{11}
    \end{gather*}
Here, the value of the delta amplitude \(\operatorname{dn}{t_i}\) determines the value $t_i$ up to the real half-period \(2K\). This ambiguity is resolved by Table~\ref{tab:tpqsigma}, which shows intervals where $t_i$ lies, depending on the values of $p_iq_i$ and $\sigma_i$.
\begin{table}[htbp!]
\centering
\begin{tabular}{c|c|c|c}
& \( p_iq_i \in \mathbb{R}_{>0} \) & \( p_iq_i \in \mathbf{i} \mathbb{R}_{>0} \) & \( p_iq_i \in \mathbb{R}_{<0} \) \\
\hline
\( \sigma_i < \pi \) & \( (0, \mathbf{i}K') \) & \( (K, K + \mathbf{i}K') \) & \( (2K, 2K + \mathbf{i}K') \) \\
\( \sigma_i > \pi \) & \( (2K, 2K + \mathbf{i}K') \) & \( (3K, 3K + \mathbf{i}K') \) & \( (0, \mathbf{i}K') \)
\end{tabular}
\caption{Phase shift intervals for different \( p_iq_i \) and $\sigma_i$.}
\label{tab:tpqsigma}
\end{table}

Note that there is a minor typographical error in the original definition of the equimodular elliptic type by Izmestiev~\cite[\S3.3.1]{izmestiev-2017}, specifically in the condition equating certain amplitudes; this was first noted by He~\cite{HePhD2021}. Throughout this paper, we adopt the corrected conditions~\eqref{eq:N.4}; the correction is purely notational and does not alter the classification framework.



We are ready to state the main theorem.

\begin{theorem}\label{main_th:th1}
Any elliptic QS-net has equimodular elliptic type and its flat angles satisfy
\begin{equation*}\label{eq:M.0.a}\tag{12a}
\delta_1 = {\pi}/{2}\qquad\text{and}\qquad
\alpha_1, \beta_1, \gamma_1,\overline{\alpha}_1, \overline{\beta}_1, \overline{\gamma}_1, \overline{\delta}_1\in(0, \pi).
\end{equation*}
Conversely, any $\alpha_i$, $\beta_i$, $\gamma_i$, $\delta_i$ satisfying~\eqref{eq:QS}, \eqref{eq:N.0} and~\eqref{eq:M.0.a} are
flat angles of some flexible elliptic QS-net with a flexion given by
\begin{align*}\label{eq:M.0.b}\tag{12b}
\left.
\begin{aligned}
    &\cot{\frac{\theta_1(t)}{2}} =
\frac{-t \sin\beta_1 \pm \sqrt{D(t)}}{\sin\overline{\delta}_1 \sin(\overline{\alpha}_1 - \beta_1) + t^2\sin\overline{\alpha}_1 \sin(\overline{\delta}_1 - \beta_1) },
 \\
    &\cot{\frac{\theta_2(t)}{2}}  = t, \\
    &\cot{\frac{\theta_3(t)}{2}}  =
\frac{t \sin\beta_1 \pm \sqrt{D(t)}}{\sin\overline{\delta}_1 \sin(\overline{\alpha}_1 - \beta_1) + t^2\sin\overline{\alpha}_1 \sin(\overline{\delta}_1 - \beta_1) },
 \\
    &\cot{\frac{\theta_4(t)}{2}}  =\pm
\frac{\sqrt{D(t)}}{ \sin\overline{\gamma}_1\sin\overline{\delta}_1 +  t^2\sin(\overline{\gamma}_1 - \beta_1)  \sin(\overline{\delta}_1 - \beta_1)},
\end{aligned}
\right.
\end{align*}
where $t$ varies in some interval, the signs in $\pm$ agree, and
\begin{multline*}
D(t):=\left(\sin\sigma_1 \sin(\overline{\alpha}_1 - \beta_1) + t^2\sin\overline{\alpha}_1 \sin\overline{\beta}_1\right) \times\\ \left(t^2\sin(\overline{\gamma}_1 - \beta_1) \sin(\overline{\delta}_1 - \beta_1) + \sin\overline{\gamma}_1 \sin\overline{\delta}_1 \right).
\end{multline*}
\end{theorem}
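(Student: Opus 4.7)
The plan is to prove the two implications separately, each hinging on the algebraic simplifications produced by the QS relations~\eqref{eq:QS}.

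For the forward implication, I would start with an elliptic QS-net and observe that the 4-tuples of flat angles at the four vertices are permutations, or $\pi$-complements of permutations, of $(\alpha_1, \beta_1, \gamma_1, \delta_1)$: explicitly, $(\alpha_2, \beta_2, \gamma_2, \delta_2) = (\delta_1, \gamma_1, \beta_1, \alpha_1)$, $(\alpha_3, \beta_3, \gamma_3, \delta_3) = (\pi - \delta_1, \pi - \gamma_1, \pi - \beta_1, \pi - \alpha_1)$, and $(\alpha_4, \beta_4, \gamma_4, \delta_4) = (\alpha_1, \beta_1, \gamma_1, \delta_1)$. From this $\sigma_1 = \sigma_2 = \sigma_4$ and $\sigma_3 = 2\pi - \sigma_1$, and the ratios in~\eqref{eq:N.2} collapse pairwise (for instance $a_2 = d_1$, $d_2 = a_1$), giving~\eqref{eq:N.3} and~\eqref{eq:N.4} by direct substitution. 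Because additionally $f_1 = f_4$ and $f_2 = f_3$, the delta-amplitude equation~\eqref{eq:N.9} forces $t_1 \equiv \pm t_4$ and $t_2 \equiv \pm t_3$ modulo $\Lambda$; tracking the signs of $p_i q_i$ and the positions of $\sigma_i$ under the QS symmetries through Table~\ref{tab:tpqsigma} then pins down~\eqref{eq:N.5} for a specific choice of $e_1, e_2, e_3$. The identity $\delta_1 = \pi/2$ follows from the planarity of the convex central face $A_1 A_2 A_3 A_4$: its interior angles $\delta_1, \delta_2, \delta_3, \delta_4$ sum to $2\pi$, whereas~\eqref{eq:QS} forces $\delta_2 + \delta_3 = \pi$ and $\delta_1 = \delta_4$. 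The bar-angle range claims reduce to interval arithmetic using $\delta_1 = \pi/2$, $\alpha_1, \beta_1, \gamma_1 \in (0, \pi)$, and the non-vanishing conditions~\eqref{eq:N.0}.

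For the converse, the plan is to construct a flexible QS-net from the given flat angles by parameterising the dihedral angles. At each interior vertex $A_i$, the spherical four-bar linkage formed by the four faces meeting at $A_i$ induces an algebraic relation between the cotangent-half-angles of the two dihedral angles of the central strip incident to $A_i$, with coefficients that are explicit trigonometric polynomials in the flat angles at $A_i$; this is the standard starting point for Izmestiev-type analysis. Setting $t := \cot(\theta_2/2)$ as a free parameter and substituting the QS relations, each of the four vertex equations reduces to one that is quadratic in the remaining $\cot(\theta_j/2)$, and solving them produces exactly~\eqref{eq:M.0.b} with discriminant $D(t)$. The $\pm$ encodes the two branches of each quadratic, and the requirement that the signs agree reflects consistency of branch choices around the central $4$-cycle. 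Non-negativity of $D(t)$ on an open $t$-interval, together with the bar-angle range conditions already established, then shows the flexion is real and one-dimensional, except in the degenerate cases where $D(t)$ has a double root or a denominator in~\eqref{eq:M.0.b} vanishes identically---these constitute the announced ``few exceptional choices of dihedral angles''.

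The step I expect to be the main obstacle is verifying that the formulas~\eqref{eq:M.0.b} satisfy all four vertex four-bar relations \emph{simultaneously}, rather than only the three used to derive them; i.e.\ that the local vertex solutions glue into a global flexion rather than a mere triple of compatible quadratics. The plan to handle this is to leverage the forward direction proved just above: since the flat angles satisfy~\eqref{eq:QS}, the candidate net is already of equimodular elliptic type, so Izmestiev's theorem~\cite{izmestiev-2017} guarantees a genuine $1$-parameter flexion. Uniqueness of the flexion branches, combined with the fact that~\eqref{eq:M.0.b} already satisfies three of the four vertex relations by construction, then identifies the explicit parameterisation with Izmestiev's flexion and completes the proof.
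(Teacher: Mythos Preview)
Your forward-direction argument for the period condition~\eqref{eq:N.5} has a genuine gap. From $f_1=f_4$ and $f_2=f_3$ together with Table~\ref{tab:tpqsigma} you can indeed extract $t_1=t_4$ and $t_3=t_2+2K$ (since $\sigma_2$ and $\sigma_3$ lie in different rows but the same column). But plugging these into $t_1+e_1t_2+e_2t_3+e_3t_4$ gives $t_1(1+e_3)+t_2(e_1+e_2)+2Ke_2$, and no choice of $e_j\in\{\pm1\}$ lands this in $\Lambda$ generically: for $M<1$ the real generator of $\Lambda$ is $4K$, not $2K$. What is missing is a \emph{cross} relation between $t_1$ and $t_2$. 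The paper supplies it via the QS identity $f_1f_2=a_1c_1\cdot d_1b_1=M$ and the half-period formula $\operatorname{dn}(t-K)=k'/\operatorname{dn}(t)$, which for $M<1$ gives $\operatorname{dn}(t_2-K)=\sqrt{M/f_2}=\sqrt{f_1}=\operatorname{dn}(t_1)$, hence $t_2-t_1=K$; only then does $t_1-t_2-t_3+t_4$ collapse to a multiple of $4K$. (Separately, the bar-angle ranges in~\eqref{eq:M.0.a} do not follow from interval arithmetic alone: with $\delta_1=\pi/2$ and $\alpha_1,\beta_1,\gamma_1\in(0,\pi)$ one can have $\overline\alpha_1<0$; the positivity is a geometric fact about realized spherical four-bars, which the paper imports from \cite[Lemmas~2.1, 4.1]{izmestiev-2017}.)

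Your strategy for the converse---invoking Izmestiev's theorem to certify that the three-vertex solution~\eqref{eq:M.0.b} satisfies the fourth Bricard relation---does not work, because Izmestiev's classification runs the other way: it asserts that every \emph{flexible} Kokotsakis polyhedron falls into one of the listed classes, not that membership in a class implies flexibility. The paper itself exhibits (Example~\ref{ex:ex1a}, Proposition~\ref{ex:prop1a}) a polyhedron of equimodular elliptic type that is \emph{rigid}, so the implication you want is simply false. The paper closes the loop differently: it factors $D(t)$ via Lemma~\ref{plan_pr:lem1} to exhibit an explicit interval on which $D(t)>0$, and then verifies \emph{all four} Bricard equations for~\eqref{eq:M.0.b} directly (by computer algebra, referring to the supplementary notebook). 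Your derivation of~\eqref{eq:M.0.b} from three vertex relations is a fine start, but the fourth relation must be checked by hand, not by appeal to the classification.
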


We show that there exist two elliptic QS-nets with the same flat angles, one of which is flexible and the other one is not (see Proposition~\ref{ex:prop1a}).

Furthermore, we establish an algebraic characterization of polyhedra of elliptic equimodular type
with prescribed flat and dihedral angles of the central face. We aim at a system of algebraic equations, a minimal number of inequalities, and a few ``discrete'' conditions ensuring realizability in~$\mathbb{R}^3$. The result is most concisely stated in terms of the parameters
$u$, $x_i$, $y_i$, $z_i$,
leading to an efficient construction of polyhedra of elliptic equimodular type.

\begin{proposition}\label{main_th:prop1}
Assume that $\alpha_i$, $\beta_i$, $\gamma_i$, $\delta_i$, $\theta_i \in(0, \pi)$ satisfy condition \eqref{eq:N.0} for each $i=1,\ldots, 4$. Then a polyhedron with flat angles $\alpha_i$, $\beta_i$, $\gamma_i$, $\delta_i$ and dihedral angles $\theta_i$ exists and has equimodular elliptic type, if and only if
\begin{empheq}[left=\empheqlbrace]{align}
    &\delta_1 + \delta_2 + \delta_3 + \delta_4 = 2\pi,\label{eq:M.1.0} \tag{13a}\\
    &u_1 = u_2 = u_3 = u_4 =: u < 1, \label{eq:M.1.a} \tag{13b}\\
    &x_1 = x_2, \label{eq:M.1.b} \tag{13c} \\
    &x_3 = x_4, \label{eq:M.1.c} \tag{13d}\\
    &y_1 = y_4, \label{eq:M.1.d} \tag{13e}\\
    &y_2 = y_3, \label{eq:M.1.e}  \tag{13f} \\
    &\cos{\delta_i} =  \varepsilon_i\frac{1 - y_iz_iu - x_iz_iu + x_iy_iu}{2\sqrt{x_iy_iu(1+z_i)(1+uz_i)}}, \label{eq:M.1.f} \tag{13g} \\
    &\sin{\theta_i} \sin{\theta_{i-1}} = \varepsilon_i\frac{A_{i1} + A_{i2}y_iz_iu + A_{i3}x_iz_iu + A_{i4}x_iy_iu}{2\sqrt{x_iy_iu(1+z_i)(1+uz_i)}}, \label{eq:M.1.g} \tag{13h}
\end{empheq}
for each $i=1,\ldots, 4$, and there exist $e_1$, $e_2$, $e_3 \in \{+1, -1\}$ such that condition \eqref{eq:N.5} holds, where $\varepsilon_i$, $x_i$, $y_i$, $z_i$, $u_i$ are introduced in \eqref{eq:N.1}, \eqref{eq:N.2}, and
{
\begin{align*}
   A_{ij} := 4\cos^2{\left(\frac{\theta_{i}}{2} + \frac{\pi}{4}ij(j-1) + \frac{\pi j}{2} \right)} \cos^2{\left(\frac{\theta_{i-1}}{2} + \frac{\pi}{4}(i-1)j(j-1) + \frac{\pi j}{2}\right)},
   \label{eq:M.2}\tag{13i}
\end{align*}
}for $i, j =1, \ldots, 4$.
\end{proposition}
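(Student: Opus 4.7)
The plan is to translate the definition of equimodular elliptic type (conditions \eqref{eq:N.3}--\eqref{eq:N.5}) into the algebraic system in the variables $u$, $x_i$, $y_i$, $z_i$, and then to supplement it with the geometric realizability conditions \eqref{eq:M.1.0}, \eqref{eq:M.1.f}, \eqref{eq:M.1.g}. I would treat the forward direction (polyhedron exists $\Rightarrow$ algebraic conditions hold) and its converse in parallel, since most of the substitutions are bijective on the relevant domains.

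First, I would recast \eqref{eq:N.3} as \eqref{eq:M.1.a} via the identity $u_i = 1 - M_i$; the strict inequality $u < 1$ encodes $M > 0$, which is automatic once the quantities $a_i, b_i, c_i, d_i$ are real and nonzero under the ellipticity assumption \eqref{eq:N.0}. The substitutions $x_i = 1/(r_i - 1)$, $y_i = 1/(s_i - 1)$ translate the amplitude matchings \eqref{eq:N.4} verbatim into \eqref{eq:M.1.b}--\eqref{eq:M.1.e}. The closure relation \eqref{eq:M.1.0} is the geometric requirement that the four central faces close up in $\mathbb{R}^3$, i.e.\ that the flat angles $\delta_1,\ldots,\delta_4$ at the central vertex sum to $2\pi$. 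The period condition \eqref{eq:N.5} passes through unchanged, with the elliptic modulus $k$ determined by $u$ through \eqref{eq:N.7}.

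The heart of the proof is the derivation of \eqref{eq:M.1.f} and \eqref{eq:M.1.g}. For \eqref{eq:M.1.f}, I would apply the spherical cosine law at each vertex $A_i$, where the flat angles $\alpha_i, \beta_i, \gamma_i, \delta_i$ form a spherical quadrilateral on the unit sphere centered at $A_i$. Using the half-angle identities encoded in $a_i, b_i, c_i, d_i$, one rewrites $\cos\delta_i$ as a rational function of $r_i, s_i, f_i, M_i$, and hence of $x_i, y_i, z_i, u$; the sign $\varepsilon_i = \sign(\sin\sigma_i)$ fixes the branch of the square root in the denominator. For \eqref{eq:M.1.g}, I would combine the spherical sine rule with the two dihedral-angle relations at the edges $A_iA_{i\pm 1}$ (along which $\theta_i$ and $\theta_{i-1}$ are measured) to express the product $\sin\theta_i \sin\theta_{i-1}$ in terms of $\cos\theta_i,\cos\theta_{i-1}$ and the flat angles, then simplify using the same half-angle substitutions. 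The somewhat elaborate shape of $A_{ij}$ in \eqref{eq:M.2} comes from this expansion: the four terms $A_{i1},\ldots,A_{i4}$ pair up the four combinations of $\cos^2$-type factors obtained from $\theta_i, \theta_{i-1}$ at the four shifted arguments $\pm 0, \pm \pi/2$ modulo a quarter period. I expect this bookkeeping to be the main obstacle, since one must match the resulting quadratic form in $y_iz_iu, x_iz_iu, x_iy_iu$ with the spherical-geometry expansion on the nose, branch by branch according to the intervals in Table~\ref{tab:tpqsigma}.

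For the converse direction, given $u$, $x_i$, $y_i$, $z_i$, $\delta_i$, $\theta_i$ satisfying the system, I would reconstruct $a_i, b_i, c_i, d_i$ (and hence $\alpha_i, \beta_i, \gamma_i$) from $r_i, s_i, f_i, M_i$ via \eqref{eq:N.2}, using \eqref{eq:N.0} to pin down the flat angles uniquely in $(0,\pi)$. Then \eqref{eq:M.1.f} is precisely the consistency condition that the reconstructed spherical quadrilateral at each vertex $A_i$ closes, while \eqref{eq:M.1.g} reproduces the prescribed dihedral angles at the central edges. The closure \eqref{eq:M.1.0} glues the four vertices around the central face, and \eqref{eq:M.1.b}--\eqref{eq:M.1.e} ensure the two boundary strips match along their shared edges so the polyhedron assembles in $\mathbb{R}^3$; the period condition \eqref{eq:N.5} then ensures that the resulting polyhedron lies in the equimodular elliptic class rather than merely satisfying the pointwise equimodular relations.
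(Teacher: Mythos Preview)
Your translation of \eqref{eq:N.3}--\eqref{eq:N.4} into \eqref{eq:M.1.a}--\eqref{eq:M.1.e}, and your handling of \eqref{eq:N.5} and \eqref{eq:M.1.0}, are correct. But the treatment of \eqref{eq:M.1.f} and \eqref{eq:M.1.g}, and especially your framing of the converse, contain two genuine misconceptions.

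First, you cast \eqref{eq:M.1.f} as a spherical-quadrilateral closure condition and, in the converse, as the consistency condition that lets you ``reconstruct $a_i, b_i, c_i, d_i$ (and hence $\alpha_i, \beta_i, \gamma_i$)'' from $u, x_i, y_i, z_i$. But in the hypothesis of Proposition~\ref{main_th:prop1} \emph{all} flat angles $\alpha_i,\beta_i,\gamma_i,\delta_i$ are given, and $x_i,y_i,z_i,u_i$ are then \emph{defined} from them via \eqref{eq:N.1}--\eqref{eq:N.2}. There is nothing to reconstruct: equation \eqref{eq:M.1.f} is an \emph{identity} in the flat angles (this is exactly Lemma~\ref{plan_pr:prop1}), and the paper's proof explicitly notes that \eqref{eq:M.1.f} is ``automatic'' once $u_i<1$. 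It is included in the system only because the proposition is later used numerically with $\delta_i, \theta_i$ as inputs and $u, x_i, y_i, z_i$ as unknowns---but that is not the logical content being proved here.

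Second, in your converse you assert that \eqref{eq:M.1.b}--\eqref{eq:M.1.e} ``ensure the two boundary strips match along their shared edges so the polyhedron assembles in $\mathbb{R}^3$.'' This is incorrect: those four equations are nothing but the amplitude matchings \eqref{eq:N.4} and play no role whatsoever in existence. Existence of a polyhedron with prescribed flat and dihedral angles is governed exactly by \eqref{eq:M.1.0}, $M_i>0$ (i.e.\ $u<1$), and \eqref{eq:M.1.g}; this is isolated as Lemma~\ref{plan_pr:prop2}. The key observation there, which your ``spherical sine rule'' sketch does not reach, is that \eqref{eq:M.1.g} is precisely \emph{Bricard's equation}: if $\widetilde\beta_i$ denotes the angle at $A_i$ forced by the dihedral data $\theta_i,\theta_{i-1}$ together with $\alpha_i,\gamma_i,\delta_i$ (computed via a direct dot product of edge directions), then $\widetilde\beta_i=\beta_i$ if and only if \eqref{eq:M.1.g} holds. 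Without this equivalence the converse cannot get off the ground, since your plan only attempts to assemble the polyhedron after imposing the equimodular relations, whereas in fact existence stands or falls on \eqref{eq:M.1.0}, $u<1$, and \eqref{eq:M.1.g} alone, independently of \eqref{eq:M.1.b}--\eqref{eq:M.1.e}.
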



We see that this indeed is a system of algebraic equations in $u$, $x_i$, $y_i$, and $z_i$, and just one inequality $u<1$, if $\theta_i$ and $\delta_i$ are prescribed.

All equations in the system have a simple geometric meaning.
Equation \eqref{eq:M.1.0} is just the condition on the sum of 
angles of the planar quadrilateral $A_1A_2A_3A_4$. 
Equations \eqref{eq:M.1.a}--\eqref{eq:M.1.e} are just conditions \eqref{eq:N.3} and \eqref{eq:N.4}.
The set of four equations \eqref{eq:M.1.f} is a relation between the angles $\delta_i$ and other parameters of the net introduced in \eqref{eq:N.1} and \eqref{eq:N.2}. The set of four equations \eqref{eq:M.1.g} is the relation between the flat and dihedral angles, which is equivalent to Bricard's equations \cite[Eq.~(19)]{izmestiev-2017} for 
$u<1$.

\section{Proofs}
\label{sec:prexcrit}

We first prove Theorem~\ref{main_th:th1} and then Proposition~\ref{main_th:prop1}. The former requires three simple Lemmas~\ref{plan_pr:lem1}--\ref{plan_pr:lem2.6}, while the latter requires Lemmas~\ref{plan_pr:lem2.5}--\ref{plan_pr:prop2}. Henceforth, we use the notation provided in \eqref{eq:N.1}--\eqref{eq:N.9}. 

\begin{lemma}\label{plan_pr:lem1}
If $a_i, b_i, c_i, d_i$ are well-defined (that is, $\sin \overline{\alpha}_i$, $\sin \overline{\beta}_i$, $\sin \overline{\gamma}_i$, $\sin \overline{\delta}_i\neq 0$; see \eqref{eq:N.2}) then
{
\scriptsize
    \begin{align*}
    \left.
    \begin{aligned}
    &1 - a_ib_i = \frac{\sin{\sigma}_i\sin{(\overline{\alpha}_i - \beta_i)}}{\sin\overline{\alpha}_i\sin\overline{\beta}_i},
    \qquad
    1 - b_ic_i =  \frac{\sin{\sigma}_i\sin{(\overline{\gamma}_i - \beta_i)}}{\sin\overline{\beta}_i\sin\overline{\gamma}_i},
    \qquad
    1 - b_id_i = \frac{\sin{\sigma}_i\sin{(\overline{\delta}_i - \beta_i)}}{\sin\overline{\beta}_i\sin\overline{\delta}_i},
    \\
    &c_id_i - 1 = \frac{\sin{\sigma}_i\sin{(\overline{\alpha}_i - \beta_i)}}{\sin\overline{\gamma}_i\sin\overline{\delta}_i},
    \qquad
    a_id_i - 1 = \frac{\sin{\sigma}_i\sin{(\overline{\gamma}_i - \beta_i)}}{\sin\overline{\alpha}_i\sin\overline{\delta}_i},
    \qquad
    a_ic_i - 1 = \frac{\sin{\sigma}_i\sin{(\overline{\delta}_i - \beta_i)}}{\sin\overline{\alpha}_i\sin\overline{\gamma}_i},
    \\
    &1 - M_i =  \frac{\sin{\sigma_i} \sin(\overline{\alpha}_i - {\beta}_i)\sin(\overline{\gamma}_i - {\beta}_i)\sin(\overline{\delta}_i -\beta_i)}{\sin \overline{\alpha}_i\sin \overline{\beta}_i\sin \overline{\gamma}_i\sin \overline{\delta}_i}.
    \end{aligned}
    \right.
    \end{align*}
}
\end{lemma}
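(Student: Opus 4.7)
All seven identities flow from a single \emph{two-angle master identity}
\[
\sin\overline{x}\sin\overline{y} - \sin x\sin y \;=\; \sin\sigma_i \, \sin(\sigma_i - x - y),
\]
valid for any pair $x,y\in\{\alpha_i,\beta_i,\gamma_i,\delta_i\}$, where $\overline{x}=\sigma_i-x$. I would first establish this by a direct product-to-sum calculation. Applying $\sin A\sin B = \tfrac{1}{2}[\cos(A-B)-\cos(A+B)]$ on each side and using $\overline{x}-\overline{y} = -(x-y)$, the two $\cos(A-B)$ contributions cancel, leaving $\tfrac{1}{2}\bigl[\cos(x+y) - \cos(2\sigma_i - x - y)\bigr]$, which equals $\sin\sigma_i\sin(\sigma_i - x - y)$ by sum-to-product.

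The first three identities in the lemma then follow by specializing $(x,y)$ to $(\alpha_i,\beta_i)$, $(\beta_i,\gamma_i)$, $(\beta_i,\delta_i)$, dividing by $\sin\overline{x}\sin\overline{y}$, and rewriting $\sigma_i - x - y$ as $\overline{x} - y$. The next three (those with the minus sign on the left) come from the complementary pairs $(\gamma_i,\delta_i)$, $(\alpha_i,\delta_i)$, $(\alpha_i,\gamma_i)$; here the critical observation is that $\sigma_i - \gamma_i - \delta_i = -(\overline{\alpha}_i - \beta_i)$, and analogously for the other two, so the sign on the right-hand side of the master identity flips and converts $1 - c_id_i$ into $c_id_i - 1$, and likewise for the remaining two.

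For the last identity I would factor the cross difference as
\begin{align*}
\sin\overline{\alpha}_i\sin\overline{\beta}_i\sin\overline{\gamma}_i\sin\overline{\delta}_i &- \sin\alpha_i\sin\beta_i\sin\gamma_i\sin\delta_i \\
&= \bigl(\sin\overline{\alpha}_i\sin\overline{\beta}_i - \sin\alpha_i\sin\beta_i\bigr)\sin\overline{\gamma}_i\sin\overline{\delta}_i \\
&\quad+ \sin\alpha_i\sin\beta_i\bigl(\sin\overline{\gamma}_i\sin\overline{\delta}_i - \sin\gamma_i\sin\delta_i\bigr),
\end{align*}
apply the master identity to each parenthesis, and use $\sin(\sigma_i - \gamma_i - \delta_i) = -\sin(\overline{\alpha}_i - \beta_i)$ to pull a common factor $\sin\sigma_i\sin(\overline{\alpha}_i - \beta_i)$ out front. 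What remains in brackets is $\sin\overline{\gamma}_i\sin\overline{\delta}_i - \sin\alpha_i\sin\beta_i$; a short product-to-sum computation parallel to Step~1 but for the mixed pair shows that this bracket equals $\sin(\overline{\gamma}_i - \beta_i)\sin(\overline{\delta}_i - \beta_i)$ (expand both products, reduce to $\tfrac{1}{2}[\cos(\delta_i-\gamma_i)-\cos(\alpha_i-\beta_i)]$, and evaluate the two half-sums as $\overline{\gamma}_i - \beta_i$ and $\pm(\overline{\delta}_i - \beta_i)$ via $2\sigma_i = \alpha_i+\beta_i+\gamma_i+\delta_i$). Dividing by $\sin\overline{\alpha}_i\sin\overline{\beta}_i\sin\overline{\gamma}_i\sin\overline{\delta}_i$ gives the claim. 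The only real obstacle is bookkeeping: tracking the many equivalent forms of $\sigma_i - x - y$ and their signs so that the factorizations telescope cleanly.
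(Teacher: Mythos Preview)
Your argument is correct. The master identity $\sin\overline{x}\sin\overline{y}-\sin x\sin y=\sin\sigma_i\sin(\sigma_i-x-y)$ indeed follows from product-to-sum exactly as you describe, and your sign bookkeeping for the complementary pairs (using $\sigma_i-\gamma_i-\delta_i=-(\overline{\alpha}_i-\beta_i)$, etc.) is right. The mixed bracket $\sin\overline{\gamma}_i\sin\overline{\delta}_i-\sin\alpha_i\sin\beta_i=\sin(\overline{\gamma}_i-\beta_i)\sin(\overline{\delta}_i-\beta_i)$ also checks out, since $\overline{\gamma}_i+\overline{\delta}_i=\alpha_i+\beta_i$ makes the $\cos(A+B)$ terms cancel and leaves $\tfrac{1}{2}[\cos(\delta_i-\gamma_i)-\cos(\alpha_i-\beta_i)]$ on both sides.

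Your route differs from the paper's: the paper does not give a hand proof at all but delegates the verification of all seven identities to a Mathematica notebook in the supplementary material. What you gain is a self-contained, human-readable argument that explains \emph{why} the six pairwise identities come in complementary pairs with matching right-hand sides and opposite signs on the left (they are instances of one formula evaluated at $(x,y)$ and at the complementary pair, related by $\sigma_i-x-y\mapsto -(\sigma_i-x-y)$), and why $1-M_i$ factors as it does. What the paper's approach buys is brevity and mechanical certainty without any sign-tracking; for a lemma this computational, that is a defensible trade-off, but your derivation is more illuminating.
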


\begin{proof}
The equalities are verified in Section~1 of the supplementary material (\texttt{Criterion/helper.nb})~\cite{Nurmatov2025Repo}.
\end{proof}

\begin{lemma}\label{plan_pr:lem2}
If $\alpha_i$, $\beta_i$, $\gamma_i$, $\delta_i \in (0, \pi)$ satisfy~\eqref{eq:N.0} then $a_i$, $b_i$, $c_i$, $d_i$ are well-defined and
\[
 a_ib_i, \ a_i c_i, \ a_i d_i, \ b_i c_i, \ b_i d_i, \ c_i d_i, \ M_i \neq 0, 1.
\]
\end{lemma}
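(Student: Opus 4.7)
The plan is to reduce every condition in the lemma to the non-vanishing of $\sin\!\bigl(\tfrac{1}{2}(\pm\alpha_i\pm\beta_i\pm\gamma_i\pm\delta_i)\bigr)$ for an appropriate choice of signs, which is exactly what hypothesis \eqref{eq:N.0} forbids.

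First I would establish well-definedness of $a_i,b_i,c_i,d_i$. From \eqref{eq:N.1} one has $2\overline{\alpha}_i=-\alpha_i+\beta_i+\gamma_i+\delta_i$, and symmetric expressions for $\overline{\beta}_i,\overline{\gamma}_i,\overline{\delta}_i$; each is half of one of the signed sums listed in \eqref{eq:N.0}, so its sine is nonzero. Since $\alpha_i,\beta_i,\gamma_i,\delta_i\in(0,\pi)$, the numerators $\sin\alpha_i,\ldots,\sin\delta_i$ in \eqref{eq:N.2} are strictly positive, hence $a_i,b_i,c_i,d_i\neq 0$, and every pairwise product as well as $M_i$ is nonzero.

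Next I would exclude the value $1$ for the six pairwise products by invoking the corresponding identities of Lemma \ref{plan_pr:lem1}. Each has the form
\[
1-a_ib_i=\frac{\sin\sigma_i\,\sin(\overline{\alpha}_i-\beta_i)}{\sin\overline{\alpha}_i\,\sin\overline{\beta}_i},
\]
and analogously for the other five pairs. The denominators are nonzero by the previous step. In each numerator, $\sin\sigma_i$ corresponds to the signed sum $\alpha_i+\beta_i+\gamma_i+\delta_i$, and each factor of the form $\sin(\overline{\alpha}_i-\beta_i)$, $\sin(\overline{\gamma}_i-\beta_i)$, or $\sin(\overline{\delta}_i-\beta_i)$ corresponds to a signed sum with exactly two minus signs, via the elementary identity $2(\overline{\alpha}_i-\beta_i)=-\alpha_i-\beta_i+\gamma_i+\delta_i$ and its symmetric variants. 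All such sums are forbidden by \eqref{eq:N.0}, so every numerator is nonzero. The case $M_i\neq 1$ then follows from the last identity of Lemma \ref{plan_pr:lem1}, whose numerator is the product of $\sin\sigma_i$ with three such factors, each already known to be nonzero. No step presents any genuine obstacle; the only bookkeeping required is to match each $\sin$-factor in Lemma \ref{plan_pr:lem1} with the corresponding inequivalent signed sum in \eqref{eq:N.0}, which is immediate once the half-signed-sum expressions are expanded.
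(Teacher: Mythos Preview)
Your proof is correct and follows essentially the same approach as the paper: both arguments first use \eqref{eq:N.0} to show $\sin\overline{\alpha}_i,\ldots,\sin\overline{\delta}_i\neq 0$ (well-definedness), then use $\alpha_i,\ldots,\delta_i\in(0,\pi)$ for nonvanishing of the products, and finally invoke the identities of Lemma~\ref{plan_pr:lem1} together with \eqref{eq:N.0} to exclude the value~$1$. Your write-up is slightly more explicit in matching each sine factor to its corresponding signed sum, but there is no substantive difference.
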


\begin{proof}
Condition~\eqref{eq:N.0} implies $\sin \overline{\alpha}_i$, $\sin \overline{\beta}_i$, $\sin \overline{\gamma}_i$, $\sin \overline{\delta}_i\neq 0$. Thus $a_i$, $b_i$, $c_i$, $d_i$ are well-defined. Then the right sides of equations in Lemma~\ref{plan_pr:lem1} do not vanish because $\sigma_i$, $\overline{\alpha}_i - {\beta}_i$, $\overline{\gamma}_i - {\beta}_i$, $\overline{\delta}_i -\beta_i \not\in \pi\mathbb{Z}$ by~\eqref{eq:N.0}. Since $\alpha_i$, $\beta_i$, $\gamma_i$, $\delta_i \in (0, \pi)$ it follows that $a_i b_i$, $a_i c_i$, $a_i d_i$, $b_i c_i$, $b_i d_i$, $c_i d_i$, $M_i \neq 0$, see \eqref{eq:N.2}.
%
\end{proof}

\begin{remark}\label{plan_pr:lem2:rem1}
    Conversely, one can show that condition \eqref{eq:N.0} holds if and only if $a_i$, $b_i$, $c_i$, $d_i$ are well-defined and $M_i \neq 1$.
\end{remark}

\begin{lemma}\label{plan_pr:lem2.6}
Replacement of numbers $\alpha_i$, $\beta_i$, $\gamma_i$, $\delta_i$ with their complements to~$\pi$ for all $i=1,\ldots, 4$ preserves 
$M_i$, $r_i$, $s_i$, $f_i$ and conditions~\eqref{eq:N.0}, \eqref{eq:N.3}--\eqref{eq:N.5}.
\end{lemma}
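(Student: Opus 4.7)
The plan is to exploit the identity $\sin(\pi-x)=\sin x$ to show that the quantities in~\eqref{eq:N.1}--\eqref{eq:N.2} are essentially invariant under the substitution, then to handle condition~\eqref{eq:N.5} by tracking how the phase shifts $t_i$ transform through Table~\ref{tab:tpqsigma}.

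First, I would compute the effect of the substitution $\alpha_i \mapsto \pi-\alpha_i$ (and likewise for $\beta_i,\gamma_i,\delta_i$) on the auxiliary quantities of~\eqref{eq:N.1}. Summing gives $\sigma_i' = 2\pi - \sigma_i$, and from $\overline{\alpha}_i = \sigma_i - \alpha_i$ one reads off $\overline{\alpha}_i' = \pi - \overline{\alpha}_i$, and analogously for $\overline{\beta}_i',\overline{\gamma}_i',\overline{\delta}_i'$. Hence the sines of all original and barred angles are unchanged, so by~\eqref{eq:N.2} each of $a_i,b_i,c_i,d_i$, and consequently $M_i,r_i,s_i,f_i$, is preserved. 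This preservation directly gives conditions~\eqref{eq:N.3} and~\eqref{eq:N.4}. For the ellipticity condition~\eqref{eq:N.0}, I note that for any signs $\epsilon_\beta,\epsilon_\gamma,\epsilon_\delta \in \{\pm 1\}$,
\[
(\pi-\alpha_i) + \epsilon_\beta(\pi-\beta_i) + \epsilon_\gamma(\pi-\gamma_i) + \epsilon_\delta(\pi-\delta_i) = (1+\epsilon_\beta+\epsilon_\gamma+\epsilon_\delta)\pi - (\alpha_i + \epsilon_\beta\beta_i + \epsilon_\gamma\gamma_i + \epsilon_\delta\delta_i),
\]
and since $1+\epsilon_\beta+\epsilon_\gamma+\epsilon_\delta$ is always even, the $\pi$-term vanishes modulo $2\pi$, so~\eqref{eq:N.0} is preserved.

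The main obstacle is preserving condition~\eqref{eq:N.5}. Since $M$ is preserved, so are $k,K,K'$, and $\Lambda$. Preservation of $f_i$ yields $\operatorname{dn} t_i' = \operatorname{dn} t_i$, and preservation of $r_i,s_i$ keeps $p_iq_i$ in the same column of Table~\ref{tab:tpqsigma}, while the relation $\sigma_i' = 2\pi-\sigma_i$ flips the row. A direct column-by-column inspection then shows that the interval containing $t_i'$ is obtained from that containing $t_i$ by a translation of $\pm 2K$, so $t_i' \equiv t_i + 2K \pmod{4K\mathbb{Z}}$. Consequently, for any $e_1,e_2,e_3\in\{\pm 1\}$,
\[
t_1' + e_1 t_2' + e_2 t_3' + e_3 t_4' \equiv t_1 + e_1 t_2 + e_2 t_3 + e_3 t_4 + 2K(1 + e_1 + e_2 + e_3) \pmod{4K\mathbb{Z}},
\]
and the extra term lies in $4K\mathbb{Z}\subseteq\Lambda$ (valid both for $M<1$ and for $M>1$) because $1+e_1+e_2+e_3$ is even. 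Hence~\eqref{eq:N.5} carries over with the same signs~$e_i$, completing the plan.
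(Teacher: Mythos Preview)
Your proof is correct and follows essentially the same approach as the paper's: both compute $\sigma_i' = 2\pi - \sigma_i$ and $\overline{\alpha}_i' = \pi - \overline{\alpha}_i$ (etc.) to get invariance of $a_i,b_i,c_i,d_i$ and hence of $M_i,r_i,s_i,f_i$, then handle~\eqref{eq:N.5} by observing from Table~\ref{tab:tpqsigma} and the $2K$-periodicity of $\operatorname{dn}$ that $t_i' - t_i = \pm 2K$, so the total shift is $2K$ times an even integer and lies in $4K\mathbb{Z}\subseteq\Lambda$. Your write-up is slightly more explicit in a couple of places (the parity of $1+\epsilon_\beta+\epsilon_\gamma+\epsilon_\delta$ for~\eqref{eq:N.0}, and the inclusion $4K\mathbb{Z}\subseteq\Lambda$ for both $M\lessgtr 1$), but the argument is the same.
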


\begin{proof}
    First, for new angles $\alpha_i^\star := \pi-\alpha_i$, $\beta_i^\star := \pi -\beta_i$, $\gamma_i^\star := \pi - \gamma_i$, $\delta_i^\star := \pi - \delta_i$, we have $\alpha_i^\star \pm \beta_i^\star \pm \gamma_i^\star \pm \delta_i^\star  = -\alpha_i \mp \beta_i \mp \gamma_i \mp \delta_i
    \, (\operatorname{mod}{2\pi})$. Thus, condition
    \eqref{eq:N.0} is preserved. 

    Second, since $\sigma_i^\star = 2\pi - \sigma_i$, $\overline{\alpha}_i^\star = \pi - \overline{\alpha}_i$, $\overline{\beta}_i^\star = \pi - \overline{\beta}_i$, $\overline{\gamma}_i^\star = \pi - \overline{\gamma}_i$, and $\overline{\delta}_i^\star = \pi - \overline{\delta}_i$, by \eqref{eq:N.2} it follows that new $r_i^\star = \frac{\sin \alpha_i^\star \sin \delta_i^\star}{\sin \overline{\alpha}_i^\star \sin \overline{\delta}_i^\star} =  \frac{\sin \alpha_i \sin \delta_i}{\sin \overline{\alpha}_i \sin \overline{\delta}_i} = r_i$. 
    Similarly, 
    $s_i^\star=s_i$, $f_i^\star=f_i$, $M_i^\star=M_i$ and hence 
    \eqref{eq:N.3} and \eqref{eq:N.4} are preserved. 

    Third, since $\sigma_i^\star = 2\pi - \sigma_i$, $r_i^\star = r_i$, $s_i^\star = s_i$, $f_i^\star = f_i$, and $M_i^\star = M_i$, it follows that 
    $t_i^\star - t_i = \pm 2K$ 
    for all $i=1,\ldots, 4$ by \eqref{eq:N.9}, Table~\ref{tab:tpqsigma}, and the $2K$-periodicity of the delta amplitude $\mathrm{dn}(t)$. Since $\pm 2K \pm 2K\pm2K \pm 2K = 0 \, (\operatorname{mod}{4K})$ for any choice of signs in $\pm$, it follows that 
    \eqref{eq:N.5} is 
    preserved.
\end{proof}

\begin{remark}\label{rem:half-symmetry}
    Lemma~\ref{plan_pr:lem2.6} remains true (with the same proof) if the
    replacement is performed 
    for $i=2$ and $i=4$ only.
\end{remark}

\begin{proof}[Proof of Theorem~\ref{main_th:th1}]
    First, any QS-net satisfies~\eqref{eq:M.0.a}. Indeed, by~\eqref{eq:QS} we must have $\delta_1={\pi}/{2}$ because $\delta_2+\delta_3 = \pi$, $\delta_1=\delta_4$, and $\delta_1 + \delta_2 + \delta_3 + \delta_4 = 2\pi$. By Lemmas~2.1 and 4.1 from \cite{izmestiev-2017}, we have $\alpha_1, \beta_1, \gamma_1, \overline{\alpha}_1, \overline{\beta}_1, \overline{\gamma}_1, \overline{\delta}_1\in(0, \pi)$.
    Let us prove that any elliptic QS-net has equimodular elliptic type.
    Since the net is elliptic, it follows that \eqref{eq:N.0} is satisfied for all $i=1, \ldots, 4$.

    Directly from \eqref{eq:N.2}, we get $M_1 = M_2 = M_3 = M_4=:M$, $r_1 = r_2 = r_3 = r_4=:r$, $s_1 = s_4$, and $s_2= s_3$. 
    Thus, conditions \eqref{eq:N.3} and \eqref{eq:N.4} are satisfied.

    Let us verify condition \eqref{eq:N.5}. Observe that $\sigma_1 = \sigma_2 = \sigma_4$ and $\sigma_3 = 2\pi - \sigma_1$.
    Since $\overline{\alpha}_1$, $\overline{\beta}_1$, $\overline{\gamma}_1$, $\overline{\delta}_1\in(0, \pi)$, it follows that their permutations $\overline{\alpha}_2$, $\overline{\beta}_2$, $\overline{\gamma}_2$, $\overline{\delta}_2\in(0, \pi)$ and $\overline{\alpha}_4$, $\overline{\beta}_4$, $\overline{\gamma}_4$, $\overline{\delta}_4\in(0, \pi)$. Since $\alpha_3 = \pi -\delta_1$ and $\sigma_3 = 2\pi -\sigma_1$, it follows that $\overline{\alpha}_3 = \pi - \overline{\delta}_1$ and hence $\overline{\alpha}_3 \in (0, \pi)$. Similarly, $\overline{\beta}_3$, $\overline{\gamma}_3$, $\overline{\delta}_3\in(0, \pi)$.

    Thus, $r_i$, $s_i$, $f_i$, $M_i\neq 1$ and are strictly positive for each $i=1, \ldots, 4$ by 
    Lemma~\ref{plan_pr:lem2} and \eqref{eq:N.2}. 
    By Lemma~\ref{plan_pr:lem1} and \eqref{eq:QS}, we have
    $$
    (s_1 - 1)(s_2 - 1)
    =\tfrac{\sin{\sigma}_1\sin{(\overline{\alpha}_1 - \beta_1)}}{\sin\overline{\gamma}_1\sin\overline{\delta}_1}\cdot
    \tfrac{\sin{\sigma}_2\sin{(\overline{\alpha}_2 - \beta_2)}}{\sin\overline{\gamma}_2\sin\overline{\delta}_2}
    =-\tfrac{\sin^2{\sigma}_1\sin^2(\overline{\alpha}_1 - \beta_1)}
    {\sin \overline{\alpha}_1\sin\overline{\beta}_1 \sin\overline{\gamma}_1\sin\overline{\delta}_1}
    <0.
    $$

    Without loss of generality, we may assume that $s_1 > 1>s_2$ and $\sigma_1<\pi$. Indeed, the case $\sigma_1>\pi$ is reduced to $\sigma_1<\pi$ using Lemma~\ref{plan_pr:lem2.6}, and the case $s_1 < 1<s_2$ is reduced to $s_1 > 1>s_2$ using a cyclic permutation of indices and Remark~\ref{rem:half-symmetry}.

    Now, by \eqref{eq:N.9}, we have $t_1 = t_4$ because $f_1 = f_4 = \frac{\sin\alpha_1\sin\gamma_1}{\sin \overline{\alpha}_1 \sin \overline{\gamma}_1} > 0$ and the delta amplitude $\operatorname{dn}(t)$ is injective on $(mK, mK + \mathbf{i}K')$ for each $m=0,\ldots, 3$. Since $f_2 = \frac{\sin\beta_1}{\sin \overline{\beta}_1 \sin \overline{\delta}_1}$ and $M = f_1f_2$, it follows that $$
    \operatorname{dn}{\left(t_2 - K\right)}
    = \frac{k'}{\operatorname{dn}{t_2}} =
    \begin{cases}
        \sqrt{{M}/{f_2}} = \sqrt{f_1} = \operatorname{dn}{t_1},
        &\text{for $M < 1$},\\
        \sqrt{{f_2}/{M}} = 1/\sqrt{{f_1}} =\operatorname{dn}{t_1},
        &\text{for $M > 1$}.
    \end{cases}
    $$
    Thus $\operatorname{dn}{\left(t_2 - K\right)}  =  \operatorname{dn}{t_1}$ and hence $t_2 - K = t_1$ because, by Table~\ref{tab:tpqsigma}, they both belong to either $(0, \mathbf{i}K')$ (when $r>1$) or $(K, K + \mathbf{i}K')$ (when $r<1$). Similarly, $\operatorname{dn}{\left(t_3 + K\right)}  =  \operatorname{dn}{t_1}$, hence
    $t_3 -3 K = t_1$ (when $r>1$) and $t_3 + K = t_1$ (when $r<1$).  Hence $t_1 - t_2 - t_3 + t_4 = 0\, (\operatorname{mod}{4K})$, and \eqref{eq:N.5} holds.

    Conversely, assume that $\alpha_i$, $\beta_i$, $\gamma_i$, $\delta_i$, where $i=1,\ldots, 4$, satisfy~\eqref{eq:M.0.a}, \eqref{eq:N.0}, and \eqref{eq:QS}. Let us show that $\alpha_i$, $\beta_i$, $\gamma_i$, $\delta_i$ are flat angles of a flexible elliptic QS-net.

    First, by~\eqref{eq:M.0.a} and \eqref{eq:QS}, we have $\delta_1+\delta_2+\delta_3+\delta_4=2\pi$ and $M:=M_1>0$. By~\eqref{eq:N.0}, the denominators in~\eqref{eq:M.0.b} have finitely many roots.



    Observe that there is always a non-empty interval for $t$ such that $D(t)>0$ in \eqref{eq:M.0.b}. Indeed, since $M=s_1s_2$, it follows by Lemma~\ref{plan_pr:lem1} that
    \begin{align*}
         D(t)=
         \sin\overline{\alpha}_1 \sin\overline{\beta}_1 \sin\overline{\gamma}_1 \sin\overline{\delta}_1\left(1 - s_2 + t^2\right)\left(\frac{1-M}{1-s_2}t^2 + 1\right).
    \end{align*}
    This expression is positive, for example, for $t\in \mathbb{R}$ (when $s_2<1$ and $M<1$); for $t \in \left(\sqrt{s_2-1}; \sqrt{\frac{s_2-1}{1-M}}\right)$ (when $s_2>1$ and $M<1$); for $t \in \left(\sqrt{s_2-1}, +\infty\right)$ (when $s_2>1$ and $M>1$); and for $t \in \left(0, \sqrt{\frac{1-s_2}{M-1}}\right)$ (when $s_2<1$ and $M>1$).

    For each $t$ in those intervals, expressions~\eqref{eq:M.0.b} satisfy Bricard's equations \cite[Eq.~(20)]{izmestiev-2017}; for an automated verification, see Section~0 of the supplimentary material (\texttt{Criterion/helper.nb})~\cite{Nurmatov2025Repo}. Then
    there exists a polyhedron with the flat angles $\alpha_i$, $\beta_i$, $\gamma_i$, $\delta_i$ and cotangents of dihedral half-angles~\eqref{eq:M.0.b} (the sufficiency of Bricard's equations and the condition $\delta_1+\delta_2+\delta_3+\delta_4=2\pi$ is
    well-known and reproved in Lemma~\ref{plan_pr:prop2} below in an equivalent form). In particular,
    any such polyhedron is flexible in $\mathbb{R}^3$ and~\eqref{eq:M.0.b} is a flexion. 
    It is an elliptic QS-net by \eqref{eq:N.0} and \eqref{eq:QS}.
\end{proof}



For the proof of Proposition~\ref{main_th:prop1}, we need several inequalities on the parameters $M_i, r_i, s_i, f_i$, also useful for the implementation of our algorithm.

\begin{lemma}\label{plan_pr:lem2.5}
     If $\alpha_i$, $\beta_i$, $\gamma_i$, $\delta_i \in (0, \pi)$ satisfy condition \eqref{eq:N.0} and $M_i > 0$, then $\overline{\alpha}_i$, $\overline{\beta}_i$, $\overline{\gamma}_i$, $\overline{\delta}_i\in(0, \pi)$ and
     {
     \scriptsize
     \begin{align*}
     &r_i, \, s_i, \, f_i, \, (r_i-1)(r_i - M_i), \, (s_i- 1)(s_i - M_i), \, (f_i - 1)(f_i-M_i), \, (r_i-1)(s_i-1)(f_i-1)(1-M_i) >0.
     \end{align*}
     }
\end{lemma}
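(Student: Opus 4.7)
The plan is to first establish that $\overline{\alpha}_i, \overline{\beta}_i, \overline{\gamma}_i, \overline{\delta}_i \in (0, \pi)$, and then to read off the remaining inequalities directly from Lemma~\ref{plan_pr:lem1} by factoring and recognizing squared factors. Since $\alpha_i, \beta_i, \gamma_i, \delta_i \in (0, \pi)$, each $\overline{\alpha}_i = \tfrac{1}{2}(-\alpha_i + \beta_i + \gamma_i + \delta_i)$ lies in $(-\tfrac{\pi}{2}, \tfrac{3\pi}{2})$, and similarly for the others, so $\sin \overline{x}_i < 0$ can only occur when $\overline{x}_i \in (-\tfrac{\pi}{2}, 0) \cup (\pi, \tfrac{3\pi}{2})$. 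I would then run a short case analysis showing that at most one of the four $\overline{\cdot}_i$ can produce a negative sine: the conjunction $\overline{\alpha}_i < 0$ and $\overline{\beta}_i < 0$ adds to $\gamma_i + \delta_i < 0$; the conjunction $\overline{\alpha}_i > \pi$ and $\overline{\beta}_i > \pi$ adds to $\gamma_i + \delta_i > 2\pi$; and the mixed case $\overline{\alpha}_i < 0$, $\overline{\beta}_i > \pi$ adds to $\alpha_i > \pi + \beta_i$. Each contradicts $\alpha_i, \beta_i, \gamma_i, \delta_i \in (0, \pi)$. Since the numerator of $M_i = \frac{\sin\alpha_i \sin\beta_i \sin\gamma_i \sin\delta_i}{\sin\overline{\alpha}_i \sin\overline{\beta}_i \sin\overline{\gamma}_i \sin\overline{\delta}_i}$ is positive, the hypothesis $M_i > 0$ forces an \emph{even} number of negative factors in the denominator. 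Combined with the ``at most one negative'' bound, this leaves zero negatives, and condition~\eqref{eq:N.0} then excludes $\overline{x}_i \in \pi\mathbb{Z}$, giving $\overline{\alpha}_i, \overline{\beta}_i, \overline{\gamma}_i, \overline{\delta}_i \in (0, \pi)$.

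Given this, $r_i, s_i, f_i > 0$ is immediate from~\eqref{eq:N.2}, since the numerator and denominator of each are products of positive sines. For the inequality $(r_i - 1)(r_i - M_i) > 0$, I would factor
\[
(r_i - 1)(r_i - M_i) = r_i\,(a_i d_i - 1)(1 - b_i c_i)
\]
and multiply the matching pair of identities from Lemma~\ref{plan_pr:lem1} to obtain
\[
(a_i d_i - 1)(1 - b_i c_i) = \frac{\sin^2 \sigma_i \, \sin^2(\overline{\gamma}_i - \beta_i)}{\sin\overline{\alpha}_i \sin\overline{\beta}_i \sin\overline{\gamma}_i \sin\overline{\delta}_i}.
\]
The denominator is positive by the first step, the factors $\sin\sigma_i$ and $\sin(\overline{\gamma}_i - \beta_i)$ are nonzero by~\eqref{eq:N.0}, and $r_i > 0$, so the product is strictly positive. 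The inequalities $(s_i - 1)(s_i - M_i) > 0$ and $(f_i - 1)(f_i - M_i) > 0$ follow by the same computation, using the analogous factorizations $s_i - M_i = s_i(1 - a_i b_i)$ and $f_i - M_i = f_i(1 - b_i d_i)$ together with the corresponding pairs of identities from Lemma~\ref{plan_pr:lem1}.

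For the final inequality, I would multiply the four identities of Lemma~\ref{plan_pr:lem1} for $r_i - 1$, $s_i - 1$, $f_i - 1$, and $1 - M_i$. The resulting numerator is $\sin^4 \sigma_i \, \sin^2(\overline{\alpha}_i - \beta_i) \sin^2(\overline{\gamma}_i - \beta_i) \sin^2(\overline{\delta}_i - \beta_i)$, a product of squared sines all nonzero by~\eqref{eq:N.0}, while the denominator works out to $\sin^3\overline{\alpha}_i \sin\overline{\beta}_i \sin^3\overline{\gamma}_i \sin^3\overline{\delta}_i$, positive by the first step. Hence $(r_i - 1)(s_i - 1)(f_i - 1)(1 - M_i) > 0$. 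The hard part of the proof is the first step: verifying that $M_i > 0$ together with $\alpha_i, \beta_i, \gamma_i, \delta_i \in (0, \pi)$ really is enough to confine all four $\overline{\cdot}_i$ to $(0, \pi)$, since individually each $\overline{x}_i$ is only \emph{a priori} constrained to $(-\tfrac{\pi}{2}, \tfrac{3\pi}{2})$. Once that is settled, everything else reduces to a mechanical rewriting via Lemma~\ref{plan_pr:lem1}.
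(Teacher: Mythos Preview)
Your proof is correct. The main structural difference from the paper is the order in which you establish the two blocks of conclusions, and the argument you use for $\overline{\alpha}_i,\overline{\beta}_i,\overline{\gamma}_i,\overline{\delta}_i\in(0,\pi)$.

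The paper proceeds in the opposite order. It first rewrites the denominator $\sin\overline{\alpha}_i\sin\overline{\beta}_i\sin\overline{\gamma}_i\sin\overline{\delta}_i$ as $(\sin\alpha_i\sin\beta_i\sin\gamma_i\sin\delta_i)/M_i$, which is positive by hypothesis, and thereby obtains $(r_i-1)(r_i-M_i)>0$ (and its analogues) \emph{before} knowing anything about the $\overline{\cdot}_i$. From $r_i,s_i,f_i,M_i>0$ it then deduces that $a_i,b_i,c_i,d_i$ share a common sign, hence the four $\sin\overline{\cdot}_i$ share a common sign, and a short interval argument rules out the all-negative case. Your route is more elementary: you bound each $\overline{\cdot}_i$ in $(-\tfrac{\pi}{2},\tfrac{3\pi}{2})$, show by the pairwise sum/difference identities $\overline{x}_i+\overline{y}_i=(\text{sum of the other two angles})$ and $\overline{x}_i-\overline{y}_i=y_i-x_i$ that no two of the $\sin\overline{\cdot}_i$ can be simultaneously negative, and then invoke the parity constraint from $M_i>0$. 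This avoids the paper's denominator substitution but requires the small combinatorial case check. Once $\overline{\cdot}_i\in(0,\pi)$ is settled, your derivations of $(r_i-1)(r_i-M_i)>0$ and of $(r_i-1)(s_i-1)(f_i-1)(1-M_i)>0$ via Lemma~\ref{plan_pr:lem1} coincide exactly with the paper's.
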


\begin{proof}
    By Lemma~\ref{plan_pr:lem2}, all the parameters in~\eqref{eq:N.2} are well-defined and we have $r_i$, $s_i$, $f_i$, $M_i\neq 0, 1$. Using \eqref{eq:N.2} and Lemma~\ref{plan_pr:lem1}, we find the product $(r_i - 1)(1 - M_i/r_i) = M_i\sin^2{\sigma}_i\sin^2{(\overline{\gamma}_i - \beta_i)} / (\sin{\alpha_i}\sin{\beta_i}\sin{\gamma_i}\sin{\delta_i})$. Hence, using \eqref{eq:N.0} we get $M_ir_i(r_i-1)(r_i - M_i)>0$. Thus, since $M_i>0$, we get $r_i \in \left(0, \min{\{1; M_i\}}\right) \cup \left(\max{\{1; M_i\}}, +\infty\right)$. Similarly, $s_i$, $f_i \in \left(0, \min{\{1; M_i\}}\right) \cup \left(\max{\{1; M_i\}}, +\infty\right)$.

    Since $r_i$, $s_i$, $f_i$, $M_i > 0$, it follows that $\sign{a_i} = \sign{b_i} = \sign{c_i} =\sign{d_i}$. Then by 
    \eqref{eq:N.2}, we have either $\sin{\overline{\alpha}_i}$, $\sin{\overline{\beta}_i}$, $\sin{\overline{\gamma}_i}$, $\sin{\overline{\delta}_i} > 0$ or $\sin{\overline{\alpha}_i}$, $\sin{\overline{\beta}_i}$, $\sin{\overline{\gamma}_i}$, $\sin{\overline{\delta}_i} < 0$. Let us show that the latter is impossible. Indeed, otherwise $\overline{\alpha}_i$, $\overline{\beta}_i$, $\overline{\gamma}_i$, $\overline{\delta}_i \notin (0, \pi)$. Since $0 < \sigma_i < 2\pi$, it follows that $\overline{\alpha}_i$, $\overline{\beta}_i$, $\overline{\gamma}_i$, $\overline{\delta}_i \in (-\pi, 0)\cup (\pi, 2\pi)$. If a pair of angles, say, $\overline{\alpha}_i$ and $\overline{\beta}_i$, belong to different intervals among $(-\pi, 0)$ and $(\pi, 2\pi)$ then $|\overline{\alpha}_i - \overline{\beta}_i| = |{\alpha}_i - {\beta}_i|>\pi$, contradicting ${\alpha}_i$, ${\beta}_i \in (0, \pi)$. If all $\overline{\alpha}_i$, $\overline{\beta}_i$, $\overline{\gamma}_i$, $\overline{\delta}_i$ belong to the same interval, then $-2\pi < \sigma_i < 0$ or $2\pi < \sigma_i < 4\pi$ which contradicts $0 < \sigma_i < 2\pi$. Thus $\sin{\overline{\alpha}_i}$, $\sin{\overline{\beta}_i}$, $\sin{\overline{\gamma}_i}$, $\sin{\overline{\delta}_i} > 0$, which implies $\overline{\alpha}_i$, $\overline{\beta}_i$, $\overline{\gamma}_i$, $\overline{\delta}_i \in (0, \pi)$.

    Finally, since $\overline{\alpha}_i$, $\overline{\beta}_i$, $\overline{\gamma}_i$, $\overline{\delta}_i\in(0, \pi)$, it follows by Lemmas~\ref{plan_pr:lem1}--\ref{plan_pr:lem2} that
    {
    \footnotesize
    \begin{align*}
       &(r_i-1)(s_i-1)(f_i-1)(1-M_i) = \frac{\sin^4{\sigma_i} \sin^2(\overline{\alpha}_i - {\beta}_i)\sin^2(\overline{\gamma}_i - {\beta}_i)\sin^2(\overline{\delta}_i -\beta_i)}{\sin^3{\overline{\alpha}_i}\sin{\overline{\beta}_i}\sin^3{\overline{\gamma}_i}\sin^3{\overline{\delta}_i}} > 0.
    \end{align*}
    }
\end{proof}

\begin{remark} If~\eqref{eq:N.3} and \eqref{eq:N.4} hold, then, multiplying the latter inequalities for all $i=1,\dots,4$, we get $(f_1 - 1)(f_2 - 1)(f_3 - 1)(f_4 - 1) > 0$.
\end{remark}

As a corollary, we deduce the existence of a polyhedron just from the inequalities $M_i > 0$ and the condition $\delta_1 + \delta_2 + \delta_3 + \delta_4 = 2\pi$.

\begin{lemma}\label{plan_pr:lem3}
    Assume that $\alpha_i$, $\beta_i$, $\gamma_i$, $\delta_i \in (0, \pi)$ satisfy condition \eqref{eq:N.0} for each $i=1,\ldots, 4$. Then a polyhedron with flat angles $\alpha_i$, $\beta_i$, $\gamma_i$, $\delta_i$ exists if and only if $\delta_1 + \delta_2 + \delta_3 + \delta_4 = 2\pi$ and $M_i > 0$ for each $i=1, \ldots, 4$.
\end{lemma}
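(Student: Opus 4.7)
The plan is to interpret the two conditions geometrically: $\delta_1+\delta_2+\delta_3+\delta_4=2\pi$ encodes planarity of the central face, while $M_i>0$ encodes realizability of the vertex star at $A_i$ as a closed spherical quadrilateral on the unit sphere. Both directions would then follow by combining these geometric observations with the algebraic identities already established in Lemmas~\ref{plan_pr:lem1}--\ref{plan_pr:lem2.5} and a closure argument for the dihedral angles around the central face.

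For necessity, I would argue as follows. Assume a polyhedron with the given flat angles exists. Its central face $A_1A_2A_3A_4$ is a planar convex quadrilateral, so its interior angles $\delta_i$ sum to $2\pi$. At each vertex $A_i$, the four unit tangent vectors along the edges emanating from $A_i$ form a closed spherical quadrilateral on the unit sphere of consecutive arc lengths $\alpha_i,\beta_i,\gamma_i,\delta_i$. Splitting this quadrilateral by a diagonal into two spherical triangles and applying the spherical triangle inequalities yields, after simplification, $\overline{\alpha}_i,\overline{\beta}_i,\overline{\gamma}_i,\overline{\delta}_i\in(0,\pi)$. Consequently $\sin\overline{\alpha}_i,\sin\overline{\beta}_i,\sin\overline{\gamma}_i,\sin\overline{\delta}_i>0$, so $a_i,b_i,c_i,d_i>0$ and $M_i=a_ib_ic_id_i>0$.

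For sufficiency, I would construct the polyhedron in stages. First, build a planar convex quadrilateral $A_1A_2A_3A_4$ with interior angles $\delta_i$, which is possible since $\delta_i\in(0,\pi)$ and $\sum_i\delta_i=2\pi$. Next, by Lemma~\ref{plan_pr:lem2.5}, the hypothesis $M_i>0$ delivers $\overline{\alpha}_i,\overline{\beta}_i,\overline{\gamma}_i,\overline{\delta}_i\in(0,\pi)$, which is precisely the spherical-polygon inequality ensuring that a spherical quadrilateral with sides $\alpha_i,\beta_i,\gamma_i,\delta_i$ exists on the unit sphere at $A_i$ and forms a $1$-parameter family. Then parameterize the vertex stars by the four dihedral angles $\theta_1,\ldots,\theta_4$ at the central-face edges; the condition that the corner triangle at $A_i$ has apex angle $\beta_i$ becomes Bricard's spherical equation coupling $\theta_{i-1}$ and $\theta_i$. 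Finally, exhibit a real simultaneous solution of these four Bricard equations and place the outer vertices $B_i,C_i$ along the determined rays at arbitrary positive distances.

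The hard part will be this last step: real solvability of the closure system. My approach would be to note that under $M_i>0$ each Bricard equation at $A_i$, written in the variables $\cot(\theta_{i-1}/2)$ and $\cot(\theta_i/2)$, is a real affine conic with nontrivial projection to each coordinate; composing the four equations around the central face then yields a real Möbius-type closure relation, whose real fixed points correspond to valid polyhedra. A real fixed point can be exhibited either by a degree/parity count on the resulting univariate polynomial or by a continuous deformation from an obviously realizable configuration (for example, a symmetric orthodiagonal polyhedron), using openness of realizability and compactness of the relevant parameter region to preserve a real solution throughout. Once consistent dihedral angles are obtained, planarity of all side quadrilaterals and corner triangles holds by construction, completing the polyhedron.
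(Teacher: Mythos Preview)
Your overall plan (planarity of the central face for $\sum_i\delta_i=2\pi$; spherical-quadrilateral realizability for $M_i>0$) matches the paper's, and your necessity argument is essentially what the paper does. The paper's proof, however, is much shorter on the sufficiency side: it does not attempt to solve the Bricard closure system at all. It simply observes that $M_i>0$ implies $\overline\alpha_i,\overline\beta_i,\overline\gamma_i,\overline\delta_i\in(0,\pi)$ via Lemma~\ref{plan_pr:lem2.5}, and then cites Lemmas~2.1 and~4.1 of Izmestiev~\cite{izmestiev-2017}, which together assert that the conditions $\alpha_i,\beta_i,\gamma_i,\delta_i,\overline\alpha_i,\overline\beta_i,\overline\gamma_i,\overline\delta_i\in(0,\pi)$ and $\sum_i\delta_i=2\pi$ are equivalent to the existence of a polyhedron with those flat angles. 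So the entire lemma is reduced to the algebraic equivalence $M_i>0\Longleftrightarrow\overline\alpha_i,\ldots,\overline\delta_i\in(0,\pi)$, one direction of which is immediate from~\eqref{eq:N.2} and the other is Lemma~\ref{plan_pr:lem2.5}.

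The genuine gap in your proposal is the ``hard part'' you yourself flag: real solvability of the four coupled Bricard equations. Neither of your two suggested routes works as stated. The composite relation obtained by eliminating $\theta_1,\theta_2,\theta_3$ is an even-degree polynomial in $\cot(\theta_4/2)$ (degree up to $16$), so a parity/degree count gives no real root for free; and on the circle, the induced multivalued map $\theta_4\mapsto\theta_4'$ can be fixed-point free (think of a rotation). The deformation argument is also incomplete: along a path in the parameter space of $(\alpha_i,\beta_i,\gamma_i,\delta_i)$, a real solution of the closure system can collide with another and disappear into the complex, so openness of realizability is exactly what is at issue and cannot be assumed. If you want a self-contained proof, you would need to reproduce the content of Izmestiev's Lemmas~2.1 and~4.1 (in particular, the statement that under the spherical-polygon inequalities a Kokotsakis polyhedron with those flat angles exists); otherwise, citing them as the paper does is the clean route.
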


\begin{proof}
    ($\Rightarrow$) If a polyhedron exists, then $\delta_1 + \delta_2 + \delta_3 + \delta_4 = 2\pi$ and $\overline{\alpha}_i$, $\overline{\beta}_i$, $\overline{\gamma}_i$, $\overline{\delta}_i \in (0, \pi)$ by Lemmas 2.1 and 4.1 from \cite{izmestiev-2017}. Thus $M_i > 0$ by 
    \eqref{eq:N.2}.

    ($\Leftarrow$) By Lemma~\ref{plan_pr:lem2.5} we have $\overline{\alpha}_i$, $\overline{\beta}_i$, $\overline{\gamma}_i$, $\overline{\delta}_i \in (0, \pi)$. Since 
    $\alpha_i, \beta_i, \gamma_i, \delta_i \in (0, \pi)$ as well and $\delta_1 + \delta_2 + \delta_3 + \delta_4 = 2\pi$, by Lemmas 2.1 and 4.1 from \cite{izmestiev-2017} there exists a polyhedron with flat angles $\alpha_i$, $\beta_i$, $\gamma_i$, $\delta_i$, where $i=1,\ldots, 4$.
\end{proof}

The flat angles are expressed in terms of 
$x_i,y_i,z_i,u$ as follows.

\begin{lemma}\label{plan_pr:prop1}
Assume that $\alpha_i$, $\beta_i$, $\gamma_i$, $\delta_i \in (0, \pi)$ satisfy condition~\eqref{eq:N.0} and $M_i > 0$ for each $i=1,\ldots, 4$. Then
{
\footnotesize
\begin{align*}
    &\cos{\alpha_i} =  \varepsilon_i\frac{1 - y_iz_iu_i + x_iz_iu_i - x_iy_iu_i}{2\sqrt{x_iz_iu_i(1+y_i)(1+u_iy_i)}},
    \qquad
    \cos{\gamma_i} =  \varepsilon_i\frac{1 + y_iz_iu_i - x_iz_iu_i - x_iy_iu_i}{2\sqrt{y_iz_iu_i(1+x_i)(1+u_ix_i)}},
    \\
    &\cos{\delta_i} =  \varepsilon_i\frac{1 - y_iz_iu_i - x_iz_iu_i + x_iy_iu_i}{2\sqrt{x_iy_iu_i(1+z_i)(1+u_iz_i)}},
    \qquad
    \cos{\sigma_i} = \frac{1 - u_i(x_iy_i + x_iz_i + y_iz_i + 2x_iy_iz_i)}{2\sqrt{x_iy_iz_iu_i^2(1+x_i)(1+y_i)(1+z_i)}},
    \\
    &\cos{\beta_i} =  \varepsilon_i\frac{u_i(1+x_i)(1+y_i)(1+z_i) + (1+u_ix_i)(1+u_iy_i)(1+u_iz_i) - u_ix_iy_iz_i(u_i - 1)^2}{2\sqrt{u_i(1+x_i)(1+y_i)(1+z_i)(1+u_ix_i)(1+u_iy_i)(1+u_iz_i)}}.
\end{align*}
}
\end{lemma}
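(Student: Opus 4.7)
The plan is to verify each of the five formulas by directly substituting the expressions from Lemma~\ref{plan_pr:lem1} into the right-hand sides and then reducing to standard trigonometric identities. A crucial preliminary is Lemma~\ref{plan_pr:lem2.5}, which under our hypotheses $M_i>0$ and \eqref{eq:N.0} guarantees $\overline{\alpha}_i,\overline{\beta}_i,\overline{\gamma}_i,\overline{\delta}_i\in(0,\pi)$; this ensures all sines of barred angles are strictly positive, which is exactly what makes the square roots in the statement extract cleanly and makes $\varepsilon_i=\operatorname{sgn}(\sin\sigma_i)$ agree with the correct branch.

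I would begin with $\cos\delta_i$ as representative. Using Lemma~\ref{plan_pr:lem1} and the definitions \eqref{eq:N.1}--\eqref{eq:N.2}, one checks that
\[
x_iy_iu_i = \frac{\sin\overline{\delta}_i\sin(\overline{\delta}_i-\beta_i)}{\sin\sigma_i\sin\overline{\beta}_i},\quad x_iz_iu_i = \frac{\sin\overline{\alpha}_i\sin(\overline{\alpha}_i-\beta_i)}{\sin\sigma_i\sin\overline{\beta}_i},\quad y_iz_iu_i = \frac{\sin\overline{\gamma}_i\sin(\overline{\gamma}_i-\beta_i)}{\sin\sigma_i\sin\overline{\beta}_i},
\]
and, using $1+z_i=f_iz_i$ together with $1+u_iz_i = (f_i-M_i)/(f_i-1)$ and $f_i-M_i = f_i(1-b_id_i)$, the denominator collapses to the perfect square $x_iy_iu_i(1+z_i)(1+u_iz_i) = \sin^2\alpha_i\sin^2\gamma_i/(\sin^2\sigma_i\sin^2\overline{\beta}_i)$. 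The formula then reduces to the trigonometric identity
\[
2\sin\alpha_i\sin\gamma_i\cos\delta_i \;=\; \sin\sigma_i\sin\overline{\beta}_i - \sin\overline{\alpha}_i\sin(\overline{\alpha}_i-\beta_i) - \sin\overline{\gamma}_i\sin(\overline{\gamma}_i-\beta_i) + \sin\overline{\delta}_i\sin(\overline{\delta}_i-\beta_i),
\]
which follows by expanding each product via $2\sin X\sin Y = \cos(X-Y)-\cos(X+Y)$: the four $\cos\beta_i$ contributions cancel, and the remaining cosines pair up into $2\sin\gamma_i(\sin(\alpha_i+\delta_i)+\sin(\alpha_i-\delta_i)) = 4\sin\alpha_i\sin\gamma_i\cos\delta_i$.

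The formulas for $\cos\alpha_i$ and $\cos\gamma_i$ follow by exactly the same calculation after the appropriate relabeling, since the roles of the triples $(\alpha_i,\gamma_i,\delta_i)$ and $(y_i,x_i,z_i)$ appearing as "special" variables are entirely symmetric in Lemma~\ref{plan_pr:lem1} (each of $x_i,y_i,z_i$ is associated with the pair of flat angles whose $a$'s enter its defining product). For $\cos\sigma_i$, the same strategy works: using $(1+x_i)(1+y_i)(1+z_i) = r_is_if_ix_iy_iz_i$ and $r_is_if_i=(a_ic_id_i)^2$, the denominator simplifies to a perfect square proportional to $(\sin\alpha_i\sin\gamma_i\sin\delta_i)^2$, and the numerator identity becomes a single trigonometric relation verifiable by product-to-sum expansion. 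For $\cos\beta_i$, which is the most intricate, the cleanest route is the decomposition $\cos\beta_i = \cos\sigma_i\cos\overline{\beta}_i + \sin\sigma_i\sin\overline{\beta}_i$: an analogous computation produces a formula for $\cos\overline{\beta}_i$ in the same style, and combining yields the three-factor numerator and denominator as stated.

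The main obstacle is sheer algebraic bulk, particularly for the $\cos\beta_i$ formula with its triple products. Following the pattern set by Lemma~\ref{plan_pr:lem1}, I expect the most reliable completion is automated symbolic verification in the accompanying \texttt{Criterion/helper.nb} notebook; the hand computation is conceptually routine (product-to-sum expansions and term collection) but error-prone, and one must also carefully track that the $\varepsilon_i = \operatorname{sgn}(\sin\sigma_i)$ factor correctly cancels the sign arising from $|\sin\sigma_i|$ when the positive square root is taken, uniformly over the admissible range of flat angles.
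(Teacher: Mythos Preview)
Your proposal is correct and follows essentially the same approach as the paper: invoke Lemma~\ref{plan_pr:lem2.5} to secure $\overline{\alpha}_i,\overline{\beta}_i,\overline{\gamma}_i,\overline{\delta}_i\in(0,\pi)$ (and Lemma~\ref{plan_pr:lem2} for well-definedness), then verify the five identities symbolically. The paper's own proof is in fact terser than yours---it defers the entire verification to Section~2 of \texttt{Criterion/helper.nb} without spelling out the reduction via Lemma~\ref{plan_pr:lem1} that you sketch---so your hand computation for $\cos\delta_i$ and the product-to-sum argument add genuine detail beyond what the paper records.
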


\begin{proof}
Since $a_i$, $b_i$, $c_i$, $d_i$ and $r_i$, $s_i$, $f_i$, $M_i$, $M_i/r_i$, $M_i/s_i$, $M_i/f_i \neq 0, 1$ are well-defined (by Lemma~\ref{plan_pr:lem2}) it follows that $x_i$, $y_i$, $z_i \neq -1, 0$ and $u_i\neq 0, 1$ are well-defined, and $u_ix_i$, $u_iy_i$, $u_iz_i \neq -1$. Next, since $M_i>0$ it follows by Lemma~\ref{plan_pr:lem2.5} that $\overline{\alpha}_i$, $\overline{\beta}_i$, $\overline{\gamma}_i$, $\overline{\delta}_i\in(0, \pi)$. Finally, under the assumptions $\alpha_i$, $\beta_i$, $\gamma_i$, $\delta_i$, $\overline{\alpha}_i$, $\overline{\beta}_i$, $\overline{\gamma}_i$, $\overline{\delta}_i\in(0, \pi)$, all the desired formulae are verified in Section~2 of the supplementary material (\texttt{Criterion/helper.nb})~\cite{Nurmatov2025Repo}.
\end{proof}

Finally, we need the following characterization of possible flat and dihedral angles of an elliptic polyhedron (equivalent to Bricard's equations).

\begin{lemma}\label{plan_pr:prop2}
Assume that $\alpha_i$, $\beta_i$, $\gamma_i$, $\delta_i$, $\theta_i \in (0, \pi)$ satisfy condition \eqref{eq:N.0} for each $i=1,\ldots, 4$. Then a polyhedron with flat angles $\alpha_i$, $\beta_i$, $\gamma_i$, $\delta_i$ and dihedral angles $\theta_i$ exists if and only if $\delta_1 + \delta_2 + \delta_3 + \delta_4 = 2\pi$, $M_i > 0$, and equation \eqref{eq:M.1.g} holds for each $i = 1, \ldots, 4$.
\end{lemma}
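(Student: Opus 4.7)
The plan is to deduce Lemma~\ref{plan_pr:prop2} from two ingredients: Lemma~\ref{plan_pr:lem3}, which handles existence of a polyhedron with prescribed flat angles alone, and a local vertex-by-vertex equivalence between Bricard's equation and equation \eqref{eq:M.1.g}. In the latter equation, the parameter $u$ should be understood locally at vertex $A_i$ as $u_i$, since the equimodular hypothesis $u_1 = \cdots = u_4$ is not assumed in this lemma.

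For the direction ($\Rightarrow$), I would argue as follows. If a polyhedron with the stated flat and dihedral angles exists, then in particular it has the stated flat angles, so by Lemma~\ref{plan_pr:lem3} we get $\delta_1 + \delta_2 + \delta_3 + \delta_4 = 2\pi$ and $M_i > 0$ for each $i$. The link of each interior vertex $A_i$ is a spherical quadrilateral whose spherical side lengths are the flat angles $\alpha_i,\beta_i,\gamma_i,\delta_i$ at $A_i$ and two of whose spherical interior angles are the dihedral angles $\theta_{i-1}$ and $\theta_i$ along the edges of the central face incident to $A_i$. These data satisfy Bricard's equation at $A_i$ as recalled in \cite[Eq.~(19)]{izmestiev-2017}. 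Thus the forward direction reduces to verifying that Bricard's equation at $A_i$ is algebraically equivalent to \eqref{eq:M.1.g}.

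For the direction ($\Leftarrow$), suppose the stated conditions hold. By Lemma~\ref{plan_pr:lem3} there exists a polyhedron with flat angles $\alpha_i,\beta_i,\gamma_i,\delta_i$; what remains is to show that its dihedral angles along the edges of the central face can be arranged to be the prescribed $\theta_i$. The admissible pairs $(\theta_{i-1},\theta_i)$ at vertex $A_i$ form the one-dimensional algebraic curve cut out by Bricard's equation, which by the local equivalence coincides with the curve cut out by \eqref{eq:M.1.g}. The standing hypothesis places each prescribed pair on this curve, so each vertex link can be independently realized with the prescribed dihedral angles. The four local realizations glue consistently along the central planar quadrilateral $A_1A_2A_3A_4$ (well-defined since $\sum \delta_i = 2\pi$) because each $\theta_i$ is by construction the common dihedral angle along the edge $A_iA_{i+1}$ shared by the links at $A_i$ and $A_{i+1}$.

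The main obstacle is the algebraic equivalence of Bricard's equation from \cite[Eq.~(19)]{izmestiev-2017} and \eqref{eq:M.1.g}. Substituting the closed-form expressions from Lemma~\ref{plan_pr:prop1}, which write $\cos\alpha_i,\cos\beta_i,\cos\gamma_i,\cos\delta_i,\cos\sigma_i$ as rational functions of $x_i,y_i,z_i,u_i$, and expanding the quantities $A_{ij}$ via half-angle identities such as $\cos^2(\theta/2 + \pi/4) = (1-\sin\theta)/2$ and $\cos^2(\theta/2 + \pi/2) = \sin^2(\theta/2)$, both sides of \eqref{eq:M.1.g} become rational expressions in the same variables. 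Their equality then amounts to a concrete polynomial identity, which I would verify symbolically, in the same computer-algebra style used for Lemmas~\ref{plan_pr:lem1} and \ref{plan_pr:prop1}.
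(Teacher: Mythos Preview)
Your proposal is correct and follows essentially the same approach as the paper: reduce to Lemma~\ref{plan_pr:lem3} for the flat-angle conditions $\sum\delta_i=2\pi$ and $M_i>0$, then establish vertex by vertex that \eqref{eq:M.1.g} (with $u$ read as $u_i$) is the realizability condition for the prescribed dihedral angles, with the core algebraic identity checked by computer algebra. The only difference is cosmetic: rather than citing Bricard's equation as a black box, the paper constructs the polyhedron directly with prescribed $\theta_i,\alpha_i,\gamma_i,\delta_i$, computes the resulting angle $\widetilde\beta_i$ via an explicit coordinate calculation of $\vec e_1\cdot\vec e_2$, and shows that \eqref{eq:M.1.g} is exactly the condition $\widetilde\beta_i=\beta_i$.
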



\begin{figure}[htbp!]
        \centering
        \includegraphics[width=0.6\textwidth]{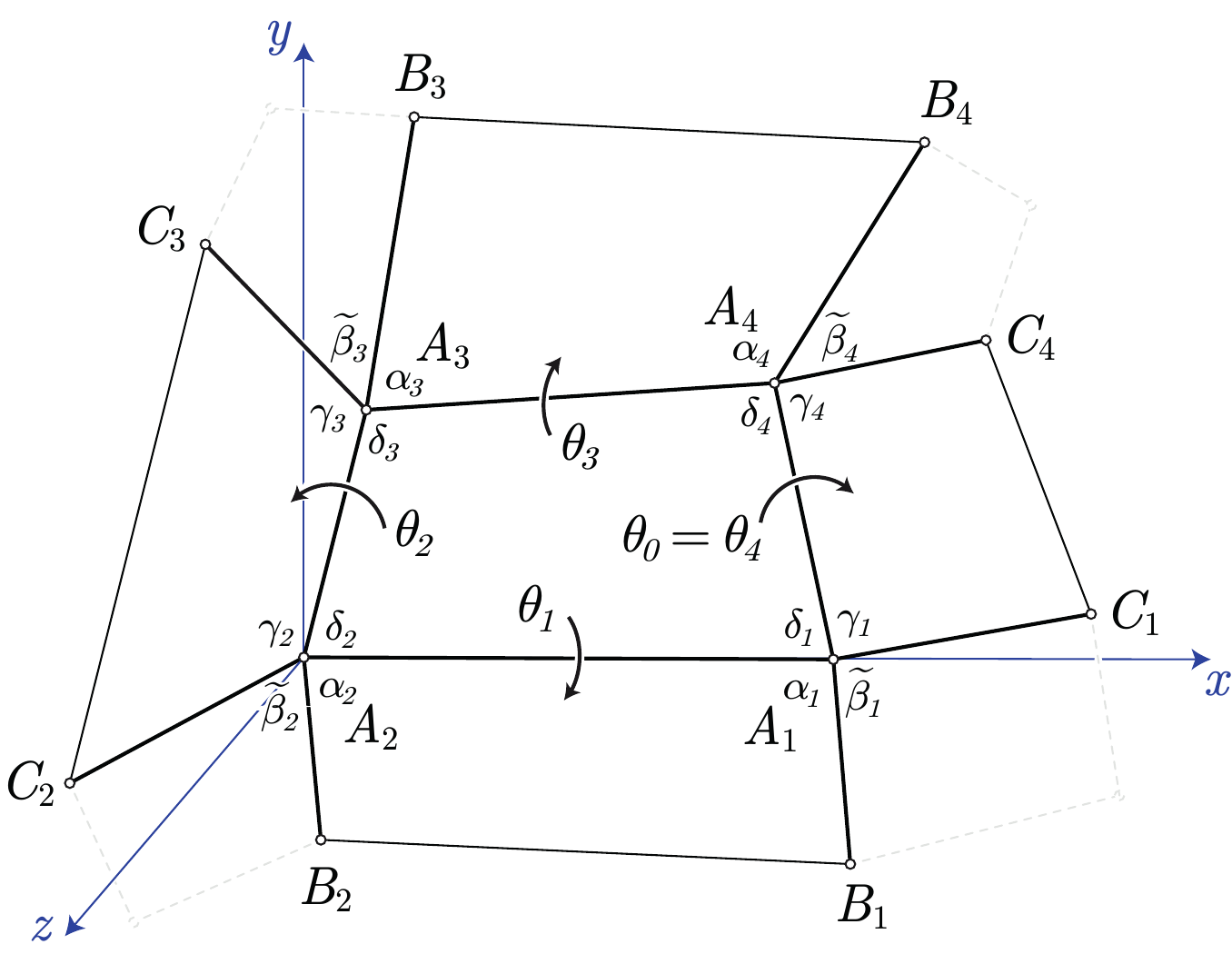}
  \caption{A polyhedron with some of the angles prescribed.}
  \label{fig:pic3}
\end{figure}

\begin{proof}
We may restrict ourselves to the values $\alpha_i,\beta_i,\gamma_i,\delta_i$ satisfying
$\delta_1 + \delta_2 + \delta_3 + \delta_4 = 2\pi$ and $M_i>0$ because by Lemma~\ref{plan_pr:lem3}, those conditions are necessary for the existence of a polyhedron. 

Clearly, there exists a polyhedron with the given dihedral angles $\theta_i$ and flat angles $\alpha_i,\gamma_i,\delta_i$ (excluding $\beta_i$) for $i=1,\dots,4$. Denote by $\widetilde\beta_i$, where $i=1,\dots,4$, the remaining flat angles of this polyhedron; see Figure~\ref{fig:pic3}. It suffices to prove that $\widetilde\beta_i=\beta_i$ if and only if~\eqref{eq:M.1.g} holds.

This follows from the following chain of equalities:
\begin{multline*}
\frac{
A_{i1} + A_{i2}y_i z_i u_i + A_{i3}x_i z_i u_i + A_{i4}x_i y_i u_i
}{
2\sqrt{x_i y_i u_i (1 + z_i)(1 + u_i z_i)}
}
= \\ =
\varepsilon_i
\left(
\frac{
\cos\beta_i
- \cos\gamma_i\bigl(\cos\alpha_i\cos\delta_i + \cos\xi_i\sin\alpha_i\sin\delta_i\bigr)
}{
\sin\alpha_i\sin\gamma_i
}
-
\right.
\\[-0.3em]
\left.
- \frac{
\cos\eta_i\sin\gamma_i\bigl(\cos\alpha_i\sin\delta_i - \cos\xi_i\sin\alpha_i\cos\delta_i\bigr)
}{
\sin\alpha_i\sin\gamma_i
}
\right)
=
\\[0.5em]
=
\varepsilon_i\sin\theta_i\sin\theta_{i-1}
+ \varepsilon_i
\frac{
\cos\beta_i - \cos\widetilde{\beta}_i
}{
\sin\alpha_i\sin\gamma_i
}.
\end{multline*}
Here $A_{ij}$ 
are defined in~\eqref{eq:M.2}, $\varepsilon_i = \sign{(\sin{\sigma_i})}\neq 0$, 
and we denote $\xi_{i} = \theta_{i}$, $\eta_i = \theta_{i-1}$ for $i=1, 3$ and $\xi_{i} = \theta_{i-1}$, $\eta_i = \theta_{i}$ for $i=2, 4$. By the assumption $M_i>0$ and Lemma~\ref{plan_pr:lem2.5}, we have
$\overline{\alpha}_i$, $\overline{\beta}_i$, $\overline{\gamma}_i$, $\overline{\delta}_i\in(0, \pi)$.
Under the assumptions $\alpha_i$, $\beta_i$, $\gamma_i$, $\delta_i$, $\overline{\alpha}_i$, $\overline{\beta}_i$, $\overline{\gamma}_i$, $\overline{\delta}_i\in(0, \pi)$, the first equality in the chain is checked in Section~3 of the supplementary material (\texttt{Criterion/helper.nb})~\cite{Nurmatov2025Repo}. To prove the second equality, assume that $i = 2$ without loss of generality. Introduce the Cartesian coordinate system with the origin at $A_2$ such that the face $A_2A_3A_4A_1$ lies in the half-plane $y\geq0$ of the $xy$-plane and the ray $A_2A_1$ is the $x$-axis as shown in Figure~\ref{fig:pic3}. The unit vectors $\vv{e_1}$ and $\vv{e_2}$ along ${A}_2{B}_2$ and ${A}_2{C}_2$ equal
$$
\vv{e_1} = \left(
\begin{matrix}
     \cos{\alpha_2} \\
     \sin{\alpha_2}\cos{\theta}_1 \\
     \sin{\alpha_2}\sin{\theta}_1
\end{matrix}
\right)
\qquad\text{and}\qquad
\vv{e_2} = \left(
\begin{matrix}
\cos{\gamma_2}\cos{\delta_2} + \sin{\gamma_2}\cos{\theta_2}\sin{\delta_2}\\ \cos{\gamma_2}\sin{\delta_2} - \sin{\gamma_2}\cos{\theta_2}\cos{\delta_2}\\ \sin{\gamma_2}\sin{\theta_2}
\end{matrix}
\right).
$$
Since $\vv{e_1} \cdot \vv{e_2} = \cos{\widetilde\beta_2}$, the last equality in the chain follows.
\end{proof}

\begin{proof}[Proof of Proposition~\ref{main_th:prop1}]
By Lemma~\ref{plan_pr:prop2}, a polyhedron with flat angles $\alpha_i$, $\beta_i$, $\gamma_i$, $\delta_i$ and dihedral angles $\theta_i$ exists if and only if conditions~\eqref{eq:M.1.0}, \eqref{eq:M.1.g}, and $u_i < 1$ hold for each $i=1,\ldots, 4$. The polyhedron has elliptic equimodular type if and only if conditions~\eqref{eq:M.1.a}--\eqref{eq:M.1.e} and \eqref{eq:N.5} hold for some $e_1,e_2,e_3\in\{\pm1\}$, because \eqref{eq:M.1.a}--\eqref{eq:M.1.e} are equivalent to \eqref{eq:N.3} and \eqref{eq:N.4}. For $u_i < 1$, equations~\eqref{eq:M.1.f} are automatic by Lemma~\ref{plan_pr:prop1}.
%
\end{proof} 

\section{Algorithms and Examples}
\label{sec:ex}


We now present several constructions of polyhedra of equimodular elliptic type, illustrated with examples. We start with closed-form examples, which are particular cases of Theorem~\ref{main_th:th1}, and establish their paradoxical properties. Then we construct an essentially different closed-form example by leveraging our findings, such as Proposition~\ref{main_th:prop1} and the lemmas in Section~\ref{sec:prexcrit}. This construction also demonstrates how the concept of quasi-symmetry was discovered. Next, we present a numerical search algorithm based on Proposition~\ref{main_th:prop1} and generate several numerical examples this way. \tmpcomment Finally, we present a small-scale prototype. \endtmpcomment



\subsection{Closed-form Examples}

\bluenew{Before presenting the specific configurations, we briefly outline the purpose of each example in this subsection to clarify the distinction between our specific and general results. Example~\ref{ex:ex1} demonstrates the direct application of Theorem~\ref{main_th:th1}, yielding an explicitly flexible QS-net. Example~\ref{ex:ex1a} presents a paradoxical case showing that there exist two QS-nets with the same flat angles, one of which is flexible while the other is not. Finally, Example~\ref{ex:ex2} illustrates the broader applicability of our general algebraic characterization from Proposition~\ref{main_th:prop1}; it provides a closed-form flexible polyhedron that belongs to the general equimodular elliptic type but does not fall into the QS-net subclass.}

The most direct way to construct polyhedra of equimodular elliptic type is to apply Theorem~\ref{main_th:th1}. Let us discuss a specially chosen particular case.

\begin{example}[See Figure~\ref{exfig:pic001}]\label{ex:ex1}
Consider the polyhedron with flat angles
    \begin{align*}
    &\alpha_1 = \alpha_4 = \delta_2 = 180^\circ - \delta_3 = 105^\circ,
    &&\beta_1 = \beta_4 = \gamma_2 = 180^\circ -  \gamma_3 = 15^\circ,
    \\
    &\gamma_1 = \gamma_4 = \beta_2 = 180^\circ -  \beta_3 = 120^\circ,
    &&\delta_1 = \delta_4 = \alpha_2 =  \alpha_3 =  90^\circ,
\end{align*}
and the cotangents of dihedral half-angles
{
\normalsize
\begin{align*}\label{eq:E.1}\tag{14}
\left.
\begin{aligned}
&\cot{\frac{\theta_1}{2}} = \frac{(\sqrt{2} - \sqrt{6})\, t \pm \sqrt{2 + 6 t^2 + 3 t^4}}{1 + \sqrt{3} + 3 t^2}, &
&\cot{\frac{\theta_2}{2}} = t,\\
&\cot{\frac{\theta_3}{2}}= \frac{(\sqrt{6}-\sqrt{2})\, t \pm \sqrt{2 + 6 t^2 + 3 t^4}}{1 + \sqrt{3} + 3 t^2}, &
&\cot{\frac{\theta_4}{2}} = \pm\frac{\sqrt{2 + 6 t^2 + 3 t^4}}{1 + \sqrt{3} + \sqrt{3} t^2},
\end{aligned}
    \right.
\end{align*}
}where $t$ is a parameter in the range $(-\infty, +\infty)$ and the signs in $\pm$ agree.
\end{example}


\begin{figure}[htbp!]
  \centering
  \begin{subfigure}[t]{0.475\textwidth}
    \centering
    \includegraphics[scale=0.1]{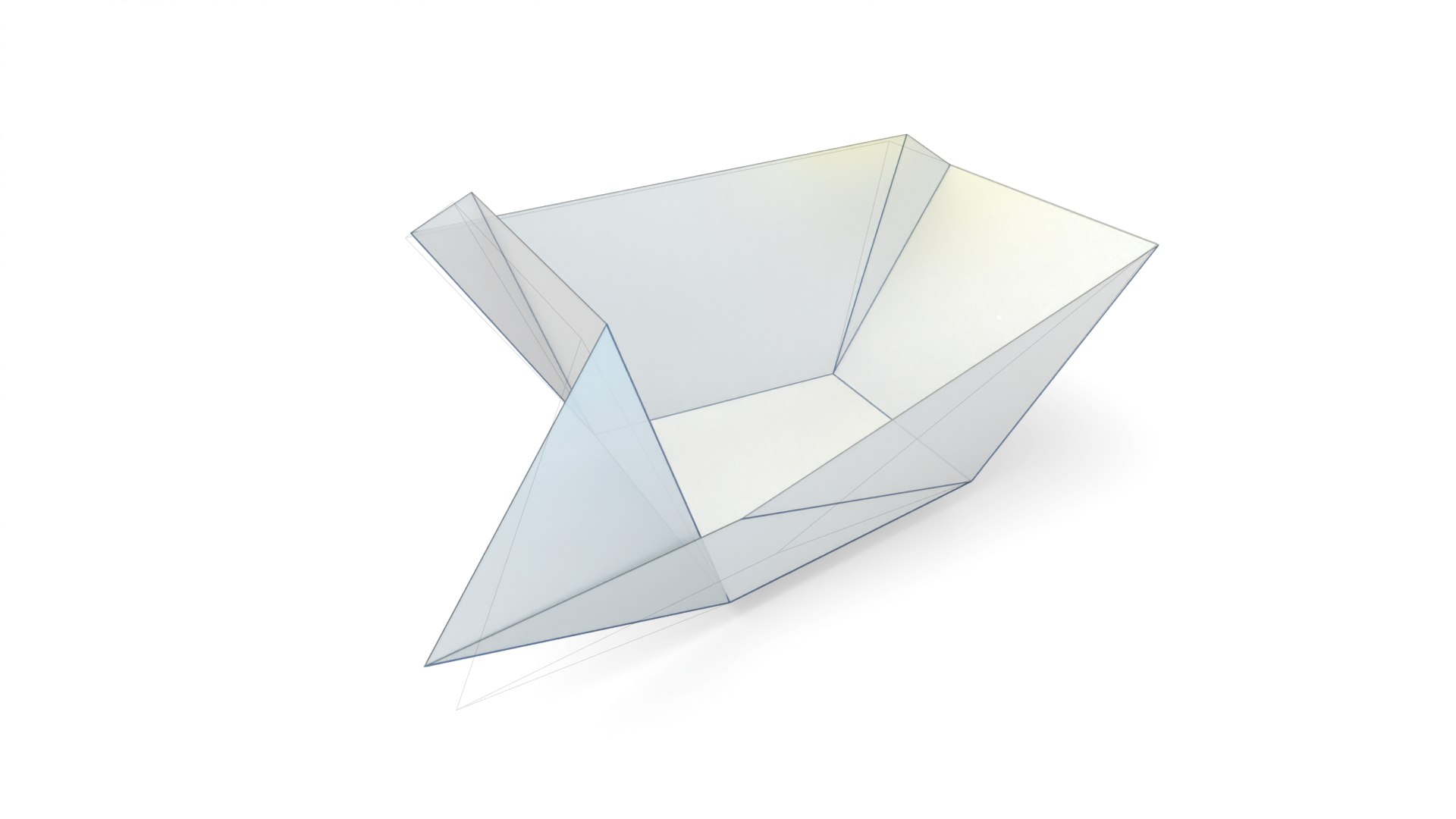}
    \caption{$t=0.5$}
    \label{exfig:pic001a}
  \end{subfigure}
  \begin{subfigure}[t]{0.475\textwidth}
    \centering
    \includegraphics[scale=0.1]{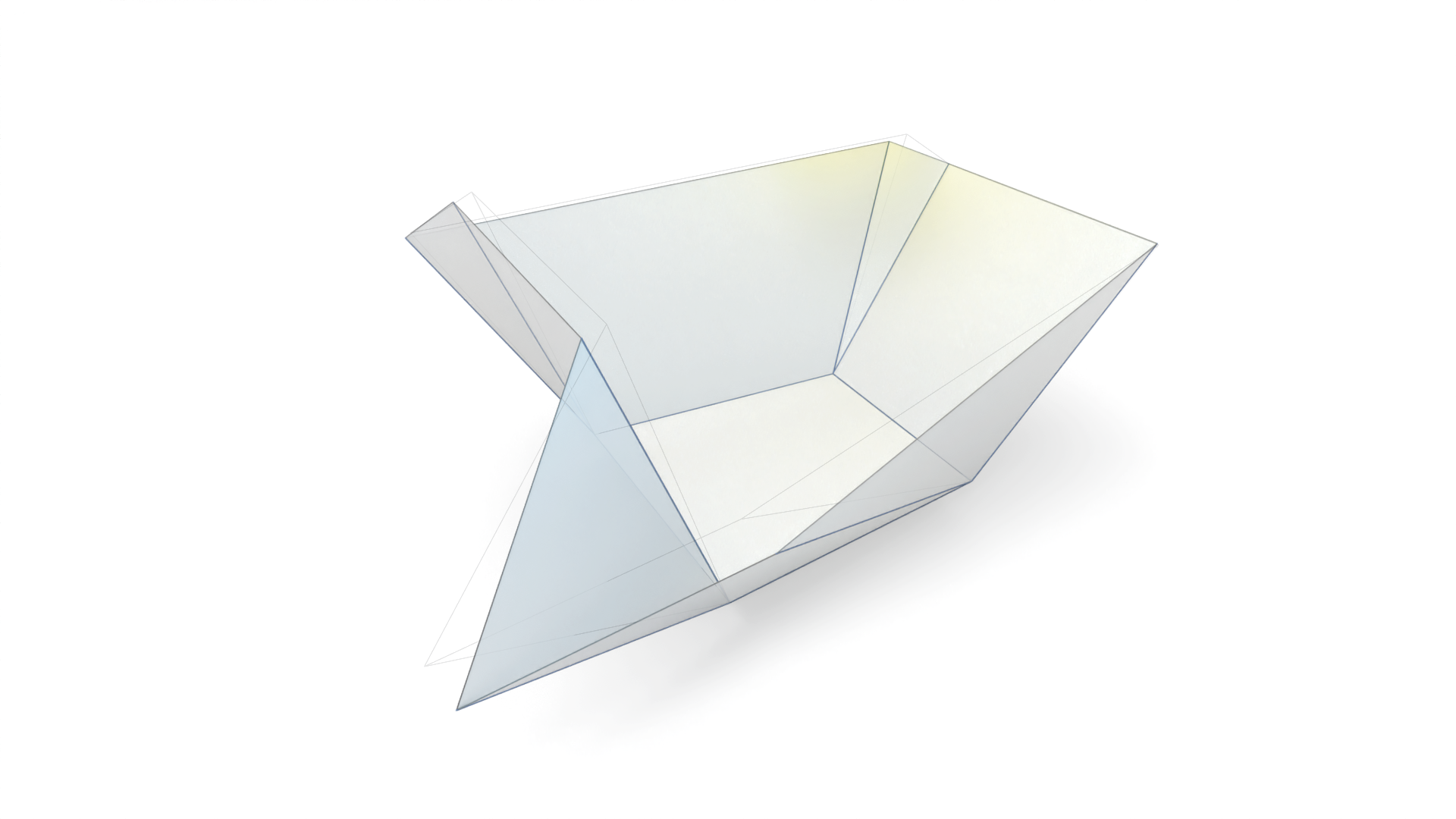}
    \caption{$t=1$}
    \label{exfig:pic001b}
  \end{subfigure}
  \caption{The QS-net in Example~\ref{ex:ex1} at different values of parameter $t$. The sign ``$+$'' is chosen in each $\pm$ in~\eqref{eq:E.1}.}
  \label{exfig:pic001}
\end{figure}


\begin{proposition}\label{ex:prop1}
    The polyhedron in Example~\ref{ex:ex1} belongs to equimodular elliptic type, does not belong to any other classes in Izmestiev's classification (even after switching the boundary strips), is flexible in $\mathbb{R}^3$, and a flexion is given by \eqref{eq:E.1}.
\end{proposition}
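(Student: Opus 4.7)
The first three assertions—equimodular elliptic type, flexibility in $\mathbb{R}^3$, and the closed-form flexion—follow from Theorem~\ref{main_th:th1} once its hypotheses are verified for the given flat angles. My plan is to check those hypotheses and then reduce the general flexion~\eqref{eq:M.0.b} to~\eqref{eq:E.1} by substitution. I would first read off the QS-net relations~\eqref{eq:QS} directly against Definition~\ref{def:qs}, then check ellipticity~\eqref{eq:N.0} by listing the eight signed sums of $\{\alpha_i,\beta_i,\gamma_i,\delta_i\}$ at each vertex; since every flat angle is an integer multiple of $15^\circ$, none of these sums is a multiple of $360^\circ$, and the sums at vertices $3,4$ reduce to those at vertices $2,1$ via~\eqref{eq:QS}. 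Condition~\eqref{eq:M.0.a} is immediate from $\delta_1=\pi/2$ together with $\sigma_1=165^\circ$, which yields $(\overline{\alpha}_1,\overline{\beta}_1,\overline{\gamma}_1,\overline{\delta}_1)=(60^\circ,150^\circ,45^\circ,75^\circ)\subset(0,\pi)$.

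Substituting the specific sines (for instance $\sin\sigma_1=\sin 15^\circ=(\sqrt6-\sqrt2)/4$, $\sin(\overline{\alpha}_1-\beta_1)=\sqrt2/2$, $\sin\overline{\gamma}_1\sin\overline{\delta}_1=(\sqrt3+1)/4$, $\sin(\overline{\gamma}_1-\beta_1)\sin(\overline{\delta}_1-\beta_1)=\sqrt3/4$) into~\eqref{eq:M.0.b} collapses the discriminant to $D(t)=(3t^4+6t^2+2)/16$ and produces a common factor of $1/4$ in the numerator and denominator of each cotangent, which cancels to give precisely~\eqref{eq:E.1}. Since $D(t)>0$ for every $t\in\mathbb{R}$ and both denominators $1+\sqrt3+3t^2$ and $1+\sqrt3+\sqrt3\,t^2$ are strictly positive, the parameter range is the whole real line, as asserted.

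The last assertion—exclusivity—is the substantive part. The plan is to invoke the explicit algebraic criteria that Izmestiev~\cite{izmestiev-2017} attaches to each of the other seven principal classes (orthodiagonal, isogonal, conjugate-modular, linear compound, linearly conjugate, chimera, trivial) and to verify, on the parameters $M_i,r_i,s_i,f_i,\sigma_i$ of the present polyhedron, that none of these criteria is satisfied. For the strip-switching clause, the same check must be repeated after the involution that swaps the two pairs of boundary strips; this relabels the flat angles and hence produces a second family of parameters to be tested against each of the seven competing characterizations.

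The main obstacle is exactly this last step: organising the algebraic conditions for all seven classes in both strip orientations. Conceptually nothing is deep—the parameters of the example lie in $\mathbb{Q}[\sqrt2,\sqrt3,\sqrt6]$, so each polynomial equality defining a competing class is either manifestly violated (for instance, the orthodiagonal class requires orthogonality of certain spherical diagonals, which fails by inspection) or reduces to an identity in radicals that can be dispatched symbolically. I would run this verification uniformly in the same computer-algebra environment already used for Lemmas~\ref{plan_pr:lem1} and~\ref{plan_pr:prop1} (the \texttt{Criterion/helper.nb} file of the supplementary material), treating the two strip orientations and the seven classes as fourteen independent polynomial-consistency checks.
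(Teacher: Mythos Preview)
Your treatment of the first three assertions is correct and matches the paper: verify~\eqref{eq:QS}, \eqref{eq:N.0}, \eqref{eq:M.0.a}, apply Theorem~\ref{main_th:th1}, and specialize~\eqref{eq:M.0.b} to obtain~\eqref{eq:E.1}. Your computation of $D(t)=(3t^4+6t^2+2)/16$ and of the denominators is accurate.

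The exclusivity argument, however, has two genuine gaps.

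First, the linear compound class is \emph{not} defined by a polynomial identity in $M_i,r_i,s_i,f_i,\sigma_i$. By definition (\cite[\S3.5]{izmestiev-2017}) it requires that some irreducible component of the \emph{complexified configuration space} satisfy $\cot(\theta_1/2)/\cot(\theta_3/2)=\mathrm{const}$ (or, after switching a strip, $\cot(\theta_1/2)\cdot\cot(\theta_3/2)=\mathrm{const}$). The paper rules this out by writing the four Bricard equations explicitly for this example and analysing their one-dimensional components: the only component of the first pair with constant $w_1/w_2$ has $w_1/w_2=-1$, while for the second pair the only such component has $w_1/w_2=+1$; these are incompatible, so no common component has constant ratio or product. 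Your plan that this ``reduces to an identity in radicals that can be dispatched symbolically'' does not address this component-by-component analysis of the solution variety and would not, as stated, succeed.

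Second, strip switching is not a single involution producing ``a second family of parameters'': there are four boundary strips, each switchable independently. The paper avoids a case explosion by tracking how each class's defining criterion transforms under any switch (for instance, $M_i$ is invariant, so conjugate-modular is excluded once and for all; switching turns the ratio condition $w_1/w_2=\mathrm{const}$ into the product condition $w_1w_2=\mathrm{const}$, and both are handled together). You should argue in the same invariant fashion rather than enumerate switched parameter families.

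One simplification you missed: the isogonal, linearly conjugate, and chimera classes all violate~\eqref{eq:N.0} for some $i$, so ellipticity of the example excludes them immediately---there is no need to inspect their algebraic criteria at all. This leaves only orthodiagonal, conjugate-modular, trivial, and linear compound to eliminate, of which only the last requires real work.
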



Hereafter, we refer to the classes introduced by Izmestiev \cite[Section~3]{izmestiev-2017}. We do not recall their definition here, because most of them are excluded immediately. \emph{Switching the right boundary strip} means replacement of $\beta_1$, $\beta_4$, $\gamma_1$, $\gamma_4$ with their complements to $180^\circ$ and changing $\theta_4$ to $\theta_4 - 180^\circ$, leaving the other flat and dihedral angles unchanged  \cite[Section 2.5]{izmestiev-2017}. Switching the left, top, and bottom boundary strips is defined analogously.

\begin{proof}[Proof of Proposition~\ref{ex:prop1}]
This proof is supplemented by a Mathematica notebook (\texttt{Example1/helper1.nb})~\cite{Nurmatov2025Repo}, which 
verifies every computation below.

First, 
we verify condition \eqref{eq:N.0} by considering all possible choices of signs in $\pm$. Next, observe that $\overline{\alpha}_1 = 60^\circ$, $\overline{\beta}_1 = 150^\circ$, $\overline{\gamma}_1 = 45^\circ$, $\overline{\delta}_1 = 75^\circ\in(0, 180^\circ)$. Thus by Theorem~\ref{main_th:th1} our example is 
realizable and flexible in $\mathbb{R}^3$, with a flexion given by \eqref{eq:E.1}, and it has equimodular elliptic type.


Let us prove that this example belongs exclusively to this 
type and does not fall into any of the other classes introduced by Izmestiev, 
even after switching the boundary strips.

All the classes except for the orthodiagonal, conjugate-modular, linear compound, and trivial one are ruled out because they do not satisfy~\eqref{eq:N.0} for some $i$.

Since $\cos{\alpha_1}\cos{\gamma_1}\neq 0=\cos{\beta_1}\cos{\delta_1}$, it follows that the orthodiagonal class is ruled out. Switching the right boundary strip acts as replacement of $\beta_1$, $\beta_4$, $\gamma_1$, $\gamma_4$ with their complements to $180^\circ$ and thus does not change this condition. Switching any number of the other boundary strips neither does.

Our example does \emph{not} belong to the conjugate-modular class, because $M_i=\sqrt{3}-1 < 1$ for all $i = 1, \ldots, 4$ so that $1/M_i+1/M_{i+1}\ne 1$. Switching the boundary strips does not transform our example to conjugate-modular class because it does not change $M_i$.

Our example does \emph{not} belong to the trivial class, because none of the dihedral angles remains fixed during flexion by~\eqref{eq:E.1}. Switching any number of the boundary strips does not transform our example to trivial class.

It remains to rule out the linear compound class. Recall that this class is defined by the condition that a component of the complexified configuration space of the polyhedron satisfies
$\cot{\frac{\theta_1}{2}}/\cot{\frac{\theta_3}{2}}=\mathrm{const}$ (or $\cot{\frac{\theta_1}{2}}\cot{\frac{\theta_3}{2}}=\mathrm{const}$ after switching the boundary strips) \cite[Section~3.5]{izmestiev-2017}. Here, the \emph{complexified configuration space} is the set of complex solutions of the Bricard equations \cite[Eq.~(20)]{izmestiev-2017} in the variables $w_1:=\cot{\frac{\theta_1}{2}}$, $t:=\cot{\frac{\theta_2}{2}}$, $w_2:=\cot{\frac{\theta_3}{2}}$, $z:=\cot{\frac{\theta_4}{2}}$ (see \texttt{Example1/helper1.nb} in~\cite{Nurmatov2025Repo}):
\begin{align*}
\left\{
\begin{aligned}
(1 + \sqrt{3} + 3t^2)w_1^2  + 2 \sqrt{2} (\sqrt{3} - 1) t w_1 - t^2  - \sqrt{3} + 1 &= 0,
\\
(1 + \sqrt{3} + 3t^2) w_2^2 - 2 \sqrt{2} (\sqrt{3} - 1) t w_2 - t^2  - \sqrt{3} +  1 &= 0, \\
( 2 \sqrt{3} + (3 + \sqrt{3}) z^2)w_2^2 - 2 (3 + \sqrt{3})zw_2  +  2 z^2 + \sqrt{3} -1 &= 0,
\\
( 2 \sqrt{3} + (3 + \sqrt{3}) z^2)w_1^2 - 2 (3 + \sqrt{3}) zw_1 + 2 z^2 + \sqrt{3} -1 &= 0.
\end{aligned}
\right.
\end{align*}
(Here, we have multiplied Bricard's equations by suitable constants. Also, more accurately, one should consider their bihomogenized versions, but this does not affect the following argument.)

Consider the first pair of equations. 
Solving them as quadratic equations in $w_1$ and $w_2$, 
we parametrize the solutions by the first three expressions in~\eqref{eq:E.1} in a neighborhood of any point $(w_1,t,w_2)$ such that $2+6t^2+3t^4\ne0$ and $1 + \sqrt{3} + 3t^2\ne 0$. Here, we choose any continuous branch of the square root in this neighborhood, and the signs in $\pm$ need \emph{not} agree. We conclude that one one-dimensional irreducible component of the solution set of the first pair of Bricard's equations satisfies $w_1/w_2=-1$, and no other one-dimensional 
component satisfies $w_1/w_2=\mathrm{const}$ nor $w_1w_2=\mathrm{const}$.

Analogously, one one-dimensional irreducible component of the solution set of the other pair of equations satisfies $w_1/w_2=+1$, and no other one-dimensional irreducible component satisfies $w_1/w_2=\mathrm{const}$ nor $w_1w_2=\mathrm{const}$. As a result, no one-dimensional irreducible component of the solution set of all four equations satisfies $w_1/w_2=\mathrm{const}$ nor $w_1w_2=\mathrm{const}$. So, our example does not belong to the linear compound class, even after switching the
boundary strips.
\end{proof}


Let us present another particular case of Theorem~\ref{main_th:th1}, which is paradoxical.

\begin{example}[See Figures~\ref{exfig:pic001.1}--\ref{exfig:pic001.2}]
\label{ex:ex1a}
Let one polyhedron have flat angles
    \begin{align*}
    &\alpha_1 = \alpha_4 = \delta_2 = 180^\circ - \delta_3 = 15^\circ,
    &&\beta_1 = \beta_4 = \gamma_2 = 180^\circ -  \gamma_3 = 60^\circ,
    \\
    &\gamma_1 = \gamma_4 = \beta_2 = 180^\circ -  \beta_3 = 75^\circ,
    &&\delta_1 = \delta_4 = \alpha_2 =  \alpha_3 =  90^\circ,
\end{align*}
and the cotangents of dihedral half-angles
{\small
\begin{align*}\label{eq:E.1.a}\tag{15a}
\left.
\begin{aligned}
&\cot{\frac{\theta_1}{2}} = \frac{\pm\sqrt{2}\sqrt{\sqrt{3}t^4+6t^2+2\sqrt{3}} - 2\sqrt{6} t}{2-\left(\sqrt{3}+1\right) t^2}, &
&\cot{\frac{\theta_2}{2}} =  t,\\
&\cot{\frac{\theta_3}{2}}= \frac{\pm\sqrt{2}\sqrt{\sqrt{3}t^4+6t^2+2\sqrt{3}} + 2\sqrt{6} t}{2-\left(\sqrt{3}+1\right) t^2}, &
&\cot{\frac{\theta_4}{2}} =  \pm\frac{\sqrt{2}\sqrt{\sqrt{3}t^4+6 t^2+2\sqrt{3}}}{\left(\sqrt{3}-1\right) t^2+2},
\end{aligned}
\right.
\end{align*}
}
where $t\ne -\sqrt{\sqrt{3}-1},\sqrt{\sqrt{3}-1}$ is a parameter
and the signs in $\pm$ agree.

Let the other polyhedron have the same flat angles and the cotangents of dihedral half-angles
\begin{align*}\label{eq:E.1.b}\tag{15b}
\left.
\begin{aligned}
&\cot{\frac{\theta_1}{2}} = \cot{\frac{\theta_3}{2}} =0, &
&\cot{\frac{\theta_2}{2}} = \sqrt{\frac{2}{3}}\cot{\frac{\theta_4}{2}}=\sqrt{1 + \sqrt{3}}.
\end{aligned}
    \right.
\end{align*}
\end{example}


\begin{figure}[htbp]
  \centering
  \begin{subfigure}[t]{0.475\textwidth}
    \centering
    \includegraphics[scale=0.1]{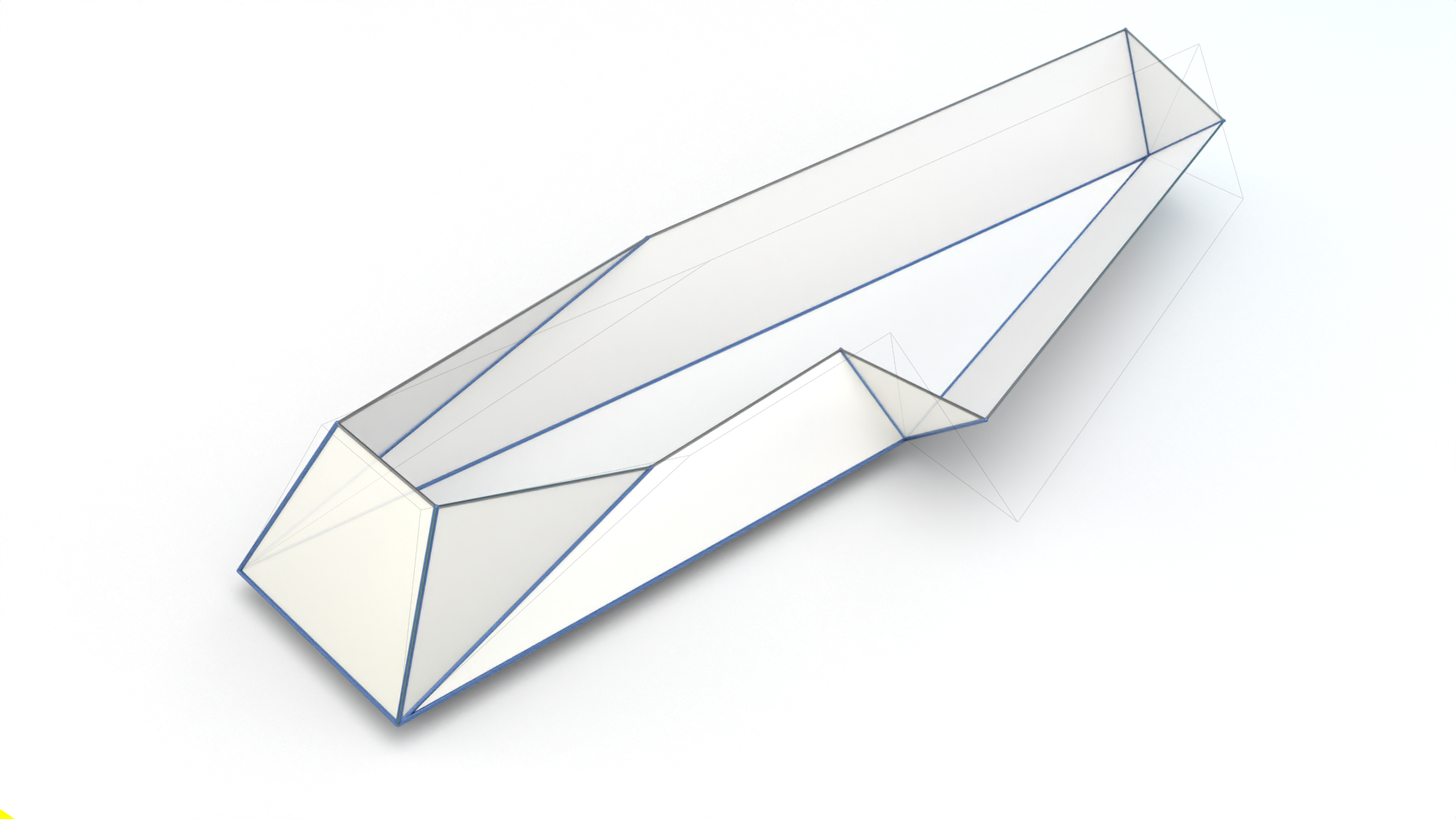}
    \caption{$t=0.1$}
    \label{exfig:pic001.1a}
  \end{subfigure}
  \begin{subfigure}[t]{0.475\textwidth}
    \centering
     \includegraphics[scale=0.1]{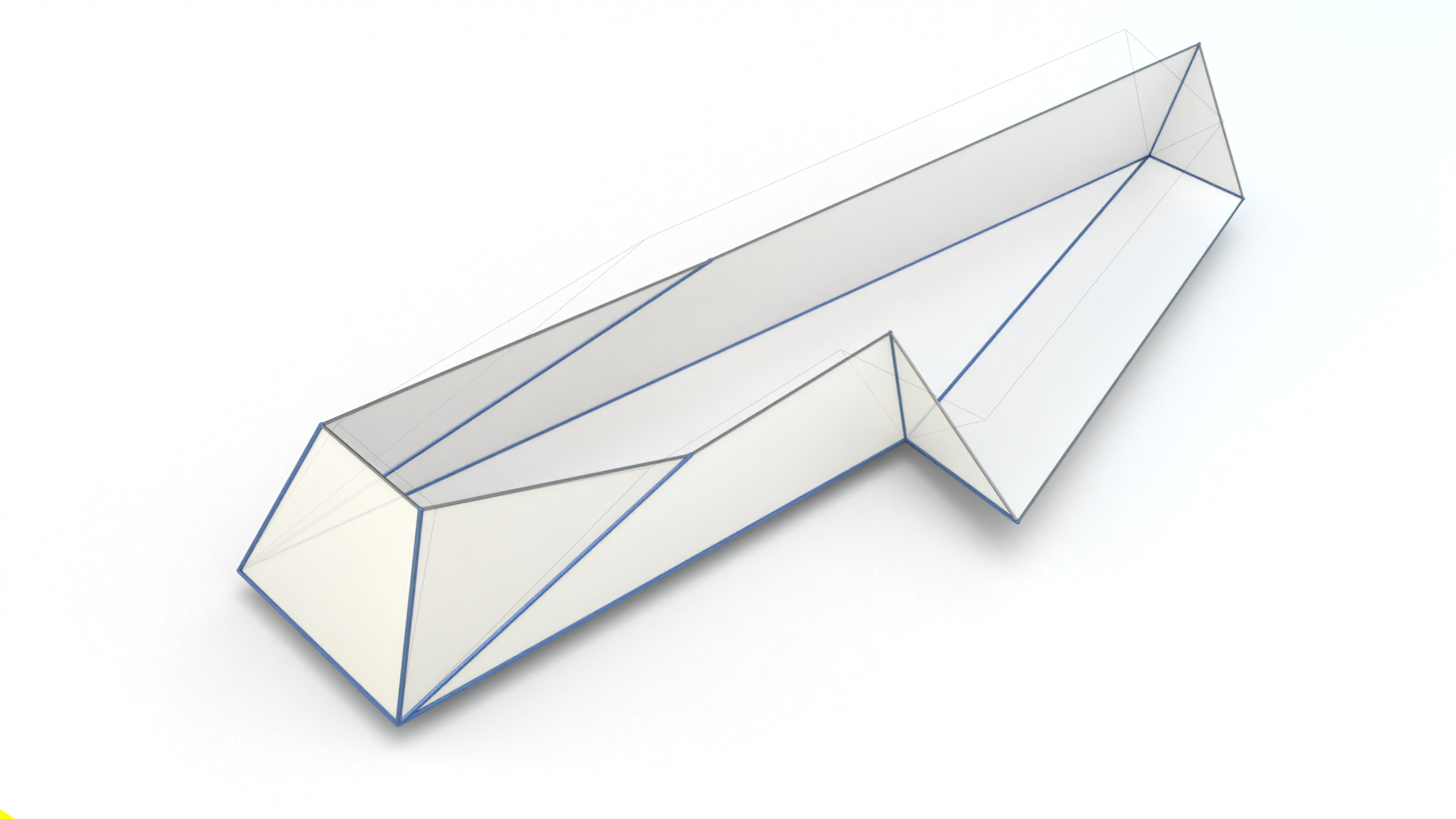}
    \caption{$t=0.5$}
    \label{exfig:pic001.1b}
  \end{subfigure}
  \caption{The first polyhedron 
  in Example~\ref{ex:ex1a} at different values of parameter $t$. The sign ``$+$'' is chosen in each $\pm$ in~\eqref{eq:E.1.a}.}
  \label{exfig:pic001.1}
\end{figure}


\begin{figure}[htbp]
    \centering
    \includegraphics[scale=0.175]{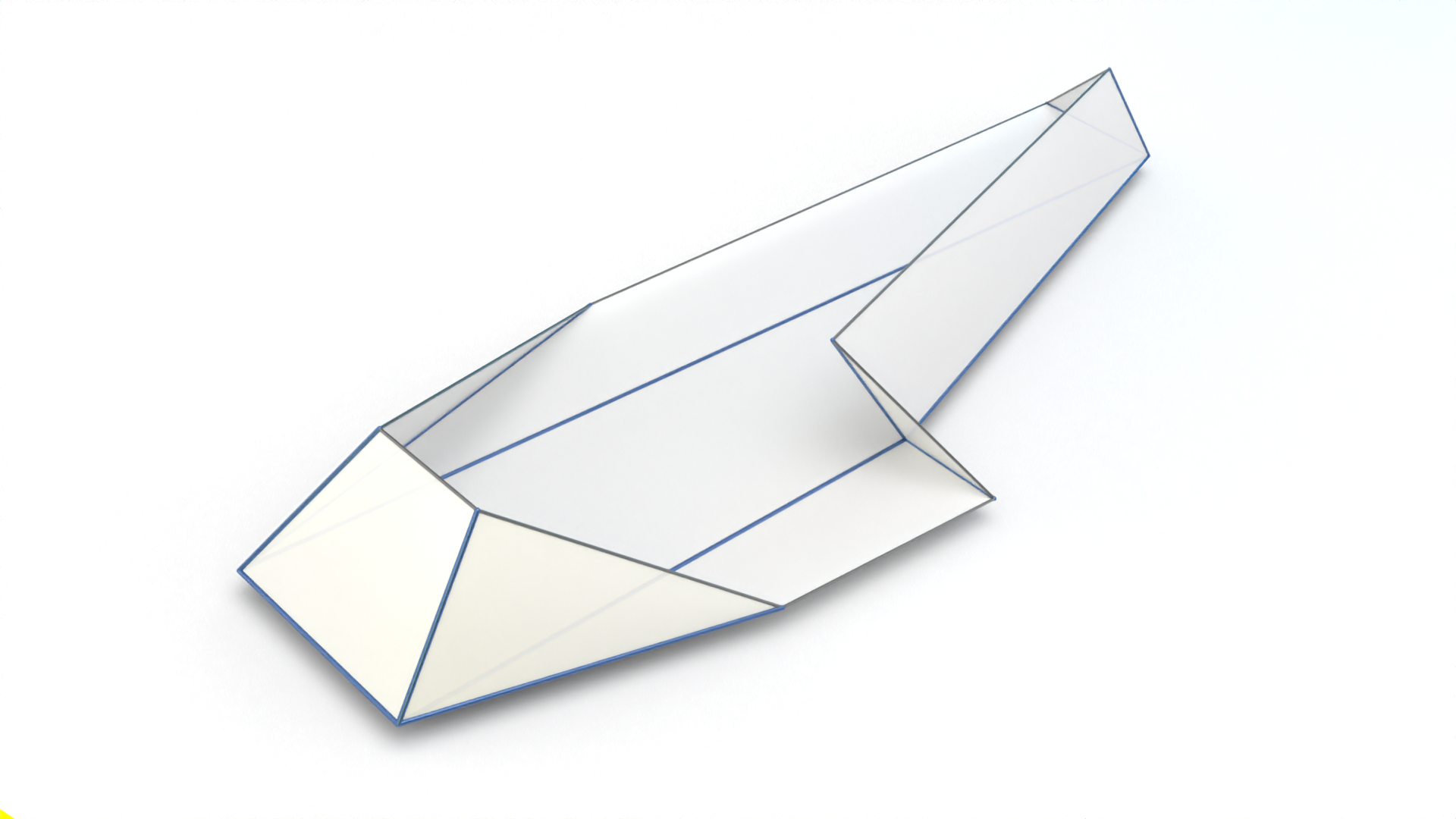}
    \caption{The second polyhedron in Example~\ref{ex:ex1a} for the fixed values of dihedral angles~\eqref{eq:E.1.b}. It has equimodular elliptic type but is not flexible.}
    \label{exfig:pic001.2}
\end{figure}

\begin{proposition}\label{ex:prop1a}
    Two polyhedra in Example~\ref{ex:ex1a} have the same flat angles, but one of them is flexible (in $\mathbb{R}^3$) and the other one is not (even in $\mathbb{C}^3$).
\end{proposition}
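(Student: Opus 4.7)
The plan is to treat the two polyhedra separately. The first polyhedron falls directly under Theorem~\ref{main_th:th1}, so its flexibility comes essentially for free. The second polyhedron requires two steps: certifying existence via Bricard's equations and then ruling out flexibility by analyzing the complexified configuration space.

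For the first polyhedron, I would verify that the flat angles satisfy the QS relations~\eqref{eq:QS}, the ellipticity condition~\eqref{eq:N.0}, and the side condition~\eqref{eq:M.0.a}. A direct computation gives $\sigma_1=120^\circ$, $\delta_1=90^\circ$, $\overline{\alpha}_1=105^\circ$, $\overline{\beta}_1=60^\circ$, $\overline{\gamma}_1=45^\circ$, $\overline{\delta}_1=30^\circ$, all in $(0,\pi)$, so~\eqref{eq:M.0.a} holds. Then Theorem~\ref{main_th:th1} immediately produces a flexible elliptic QS-net whose flexion, after substitution of these numerical values into~\eqref{eq:M.0.b}, simplifies to~\eqref{eq:E.1.a}.

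For the second polyhedron, I would first certify existence by checking Bricard's equations~\eqref{eq:M.1.g} at the prescribed dihedral angles $\cot(\theta_1/2)=\cot(\theta_3/2)=0$, $\cot(\theta_2/2)=\sqrt{1+\sqrt{3}}$, $\cot(\theta_4/2)=\sqrt{3/2}\,\sqrt{1+\sqrt{3}}$: this is a routine algebraic check, and since $\delta_1+\delta_2+\delta_3+\delta_4=2\pi$ and $M_i>0$, Lemma~\ref{plan_pr:prop2} then delivers a realization in $\mathbb{R}^3$. To establish rigidity even over $\mathbb{C}$, I would decompose the complexified configuration space (the zero locus in $(w_1,t,w_2,z)$-space of the four Bricard equations) into irreducible components, in the same style as the linear-compound exclusion inside the proof of Proposition~\ref{ex:prop1}. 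Pairing the equations and solving each pair as quadratics in $w_1$ and $w_2$ yields closed-form parametrizations of every positive-dimensional component; I would then verify that the point~\eqref{eq:E.1.b} lies on none of them, so that it is an isolated zero-dimensional component of the full intersection.

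The main obstacle is precisely this last step: it is not enough to check that~\eqref{eq:E.1.b} is not on the one-parameter family~\eqref{eq:E.1.a}, since one must rule out \emph{every} one-dimensional component through the point. I plan to carry this out via a Gr\"obner basis and primary decomposition computation in the supplementary Mathematica notebook, mirroring the computations used in Proposition~\ref{ex:prop1}. A useful preliminary sanity check is to compute the Jacobian of the Bricard system at~\eqref{eq:E.1.b}: full rank would imply a smoothly isolated point and settle rigidity immediately, while a rank drop would force the more detailed global decomposition to exclude hidden components.
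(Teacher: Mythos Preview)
Your treatment of the first polyhedron is essentially identical to the paper's: verify~\eqref{eq:N.0}, compute $\overline{\alpha}_1,\overline{\beta}_1,\overline{\gamma}_1,\overline{\delta}_1\in(0,\pi)$, and invoke Theorem~\ref{main_th:th1}. For the second polyhedron, however, note that $\cot(\theta_1/2)=\cot(\theta_3/2)=0$ forces $\theta_1,\theta_3\notin(0,\pi)$, so Lemma~\ref{plan_pr:prop2} does not apply as stated. The paper sidesteps this by writing out the four Bricard equations directly in the variables $(w_1,t,w_2,z)$ and simply checking that~\eqref{eq:E.1.b} is a solution; you should do the same rather than route through~\eqref{eq:M.1.g}.

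For rigidity, your strategy matches the paper's, but the phrasing ``verify that the point~\eqref{eq:E.1.b} lies on none of them'' is where the subtlety hides. The point \emph{does} lie on a one-dimensional component of the first pair of Bricard equations, namely the branch where the two $\pm$ signs are opposite, on which $w_1=-w_2$. The paper's key move is to substitute $w_1=-w_2$ into the \emph{second} pair, which forces $w_1=w_2=0$ or $z=0$ and hence only finitely many solutions of the full system on that branch; the same-sign branch is dispatched because it misses the point $(0,\sqrt{1+\sqrt3},0)$. So the hand argument is short and structural, and you do not actually need a Gr\"obner/primary decomposition (or a Jacobian check, which incidentally would be inconclusive here since the point sits on a positive-dimensional component of a subsystem). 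Your plan would still succeed via the computer-algebra route, but the paper's direct exploitation of $w_1=-w_2$ is both simpler and more transparent.
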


The explanation for this apparent paradox is that the solution set of the system of Bricard's equations has components of different dimensions.

\begin{proof}
We use a supplementary material (\texttt{Example2/helper2.nb})~\cite{Nurmatov2025Repo} for the proof, which 
verifies every computation below.

First, we verify condition \eqref{eq:N.0} by considering all possible choices of signs in $\pm$. Next, we compute $\overline{\alpha}_1 = 105^\circ$, $\overline{\beta}_1 = 60^\circ$, $\overline{\gamma}_1 = 45^\circ$, $\overline{\delta}_1 = 30^\circ\in(0, 180^\circ)$. Thus, by Theorem~\ref{main_th:th1}, the first polyhedron is realizable and flexible in $\mathbb{R}^3$, with a flexion given by \eqref{eq:E.1.a}.

To demonstrate the existence and non-flexibility of the second polyhedron, consider the system of Bricard's equations \cite[Eq.~(20)]{izmestiev-2017} in the variables $w_1:=\cot{\frac{\theta_1}{2}}$, $t:=\cot{\frac{\theta_2}{2}}$, $w_2:=\cot{\frac{\theta_3}{2}}$, $z:=\cot{\frac{\theta_4}{2}}$ (see \texttt{Example2/helper2.nb} in~\cite{Nurmatov2025Repo}):
\begin{align*}
\frac{1}{\sqrt{3}}\left(\left(\sqrt{3}+1\right) t^2-2\right) w_1^2-4\sqrt{2}\, t w_1-\left(\sqrt{3}-1\right) t^2+2&=0,\\
\frac{1}{\sqrt{3}}\left(\left(\sqrt{3}+1\right) t^2-2\right) w_2^2+4\sqrt{2}\, t w_2-\left(\sqrt{3}-1\right) t^2+2&=0,\\
w_1^2 \left(z^2-\sqrt{3}-1\right)+2w_1z+\left(\sqrt{3}-1\right)z^2-3&=0,\\
w_2^2 \left(z^2-\sqrt{3}-1\right)+2w_2z+\left(\sqrt{3}-1\right)z^2-3&=0.
\end{align*}
(We have multiplied Bricard's equations by suitable constants.) Since~\eqref{eq:E.1.b} is a solution to the system, it follows that the second polyhedron exists.

Let us show that~\eqref{eq:E.1.b} is an isolated solution to the system, so that the second polyhedron is not flexible (even in $\mathbb{C}^3$). Consider the first pair of equations. 
Solving them as quadratic equations in $w_1$ and $w_2$, 
we parametrize the complex solutions by the first three expressions in~\eqref{eq:E.1.a} in a neighborhood of any point $(w_1,t,w_2)$ such that
$t^4+2\sqrt{3} t^2+2\ne 0$ and $\left(\sqrt{3}+1\right) t^2-2\ne 0$. Here, we choose any continuous branch of the square root in this neighborhood, and the signs in $\pm$ need \emph{not} agree.

The irreducible component of the solution set of the first pair of equations corresponding to the same choice of signs in $\pm$ does not pass through the point $(w_1,t,w_2)=(0,\sqrt{\sqrt{3}+1},0)$ and can be ruled out.

Consider the other irreducible component, where the signs in $\pm$ are opposite. On this component, $w_1=-w_2$. Substituting this relation into the second pair of Bricard's equations, we conclude that $w_1=w_2=0$ or $z=0$. In either case, we end up with a finite number of complex solutions to all four equations. 
Thus, solution~\eqref{eq:E.1.b} is isolated.
\end{proof}

Next, let us show how one can use the results from Sections~\ref{sec:notation}--\ref{sec:prexcrit} to construct different closed-form examples. 
For simplicity, we take $\delta_i = \frac{\pi}{2}$ for each $i=1,\ldots, 4$. We look over simple fractional values of $u$, $z_i$ and try to express $x_i$ and $y_i$ using \eqref{eq:M.1.b}--\eqref{eq:M.1.f} (see Proposition~\ref{main_th:prop1}) in order to satisfy conditions \eqref{eq:N.3}--\eqref{eq:N.5}. Here, conditions \eqref{eq:N.3} and \eqref{eq:N.4} are easy to satisfy. However, satisfying condition \eqref{eq:N.5} with explicit values of $z_i$ is not so easy. For this reason, we introduce an additional restriction on $z_i$ and $u$, useful for Lemma~\ref{ex:lem2} below. We use inequalities from Lemmas~\ref{plan_pr:lem2.5}--\ref{plan_pr:lem3} to get more restrictions. Next, using Lemma~\ref{plan_pr:prop1}, we calculate flat angles. Finally, we verify condition~\eqref{eq:N.0}.

The simplest choice satisfying all those restrictions is $u= \frac{1}{2}$, $z_1 = 1$, $z_2 = \frac{4}{3}$, $z_3 = \frac{3}{2}$, $z_4 = 2$, $y_1 = \frac{1}{2}$. It leads to the following example.

\begin{example}[See Figure~\ref{exfig:pic002}]\label{ex:ex2}
Consider the polyhedron with the cosines of flat angles
    \begin{align*}
    &\cos{\alpha_1} = \frac{1}{\sqrt{5}},
    &&\cos{\beta_1} = \frac{7}{5\sqrt{2}},
    &&\cos{\gamma_1} = -\frac{1}{\sqrt{10}},
    &&\cos{\delta_1} = 0,
    \\
    &\cos{\alpha_2} = \frac{1}{4\sqrt{11}},
    &&\cos{\beta_2} = \frac{7\sqrt{7}}{4\sqrt{22}},
    &&\cos{\gamma_2} = -\frac{1}{2\sqrt{2}},
    &&\cos{\delta_2} = 0,
    \\
    &\cos{\alpha_3} = \frac{1}{4\sqrt{11}},
    &&\cos{\beta_3} = \frac{7\sqrt{7}}{4\sqrt{22}},
    &&\cos{\gamma_3} = \frac{1}{2\sqrt{2}},
    &&\cos{\delta_3} = 0,
    \\
    &\cos{\alpha_4} = \frac{1}{\sqrt{5}},
    &&\cos{\beta_4} = \frac{7}{5\sqrt{2}},
    &&\cos{\gamma_4} = \frac{1}{\sqrt{10}},
    &&\cos{\delta_4} = 0,
\end{align*}
and the cotangents of dihedral half-angles
\begin{align*}\label{eq:E.2}\tag{16}
\left.
\begin{aligned}
    &\cot{\frac{\theta_1}{2}} =t, & &\cot{\frac{\theta_2}{2}}  = \frac{5\sqrt{7}\,t \mp\sqrt{10(3t^2 - 1)(2 - 3t^2)}}{4 + 9t^2}, \\
    &\cot{\frac{\theta_3}{2}}  = \frac{1}{t}, & &\cot{\frac{\theta_4}{2}}  = \frac{6\,t \mp\sqrt{2(3t^2 - 1)(2 - 3t^2)}}{1 + 3t^2},
\end{aligned}
    \right.
\end{align*}
where $t$ is a parameter in the range $\left(\frac{1}{\sqrt{3}}, \frac{\sqrt{2}}{\sqrt{3}}\right)$ and the signs in $\mp$ agree.


\begin{figure}[htbp!]
  \centering
  \begin{subfigure}[t]{0.475\textwidth}
    \centering
    \includegraphics[scale=0.11]{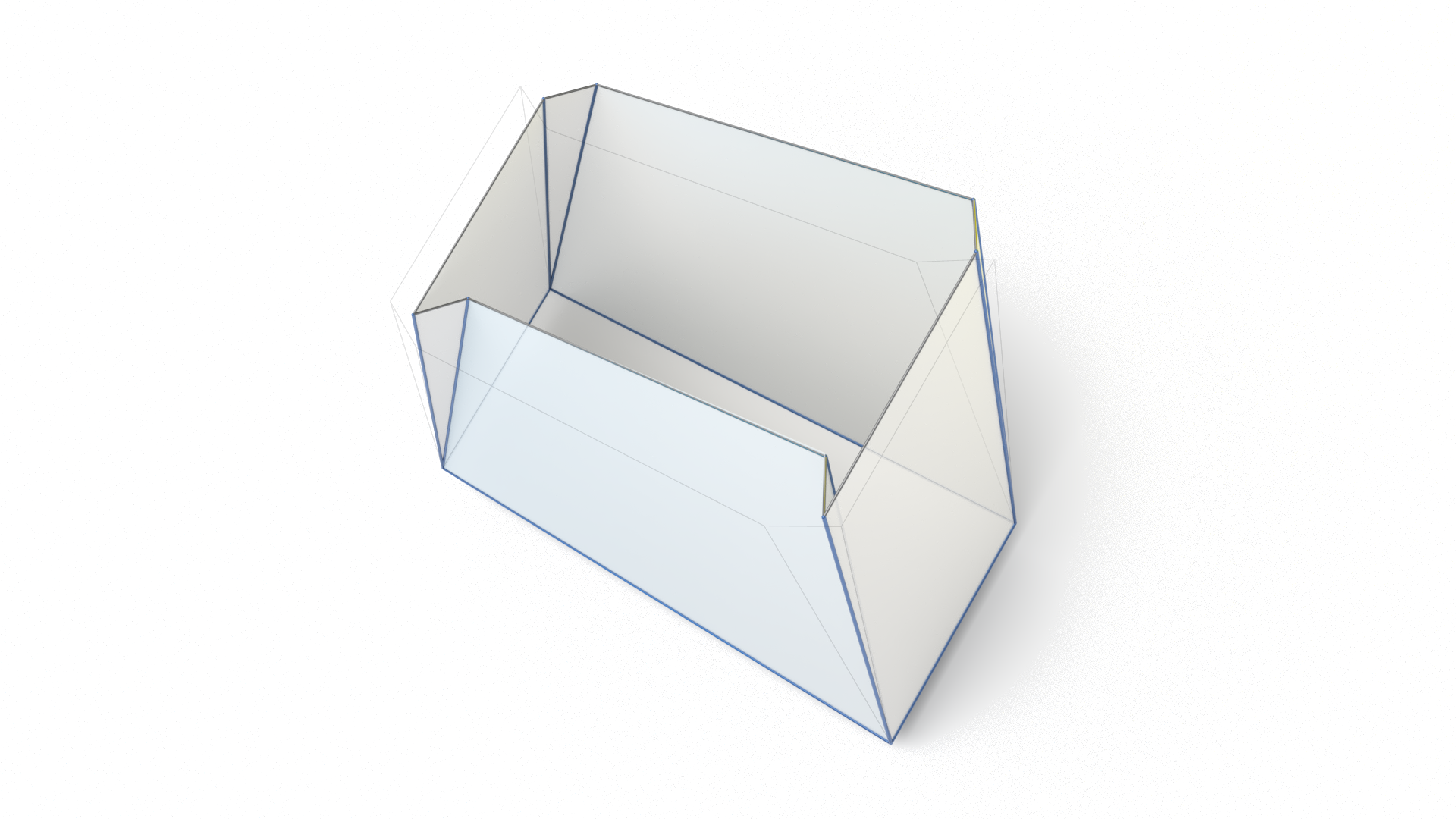}
    \caption{$t=\frac{3}{5}$}
    \label{exfig:pic002a}
  \end{subfigure}
  \begin{subfigure}[t]{0.475\textwidth}
    \centering
    \includegraphics[scale=0.11]{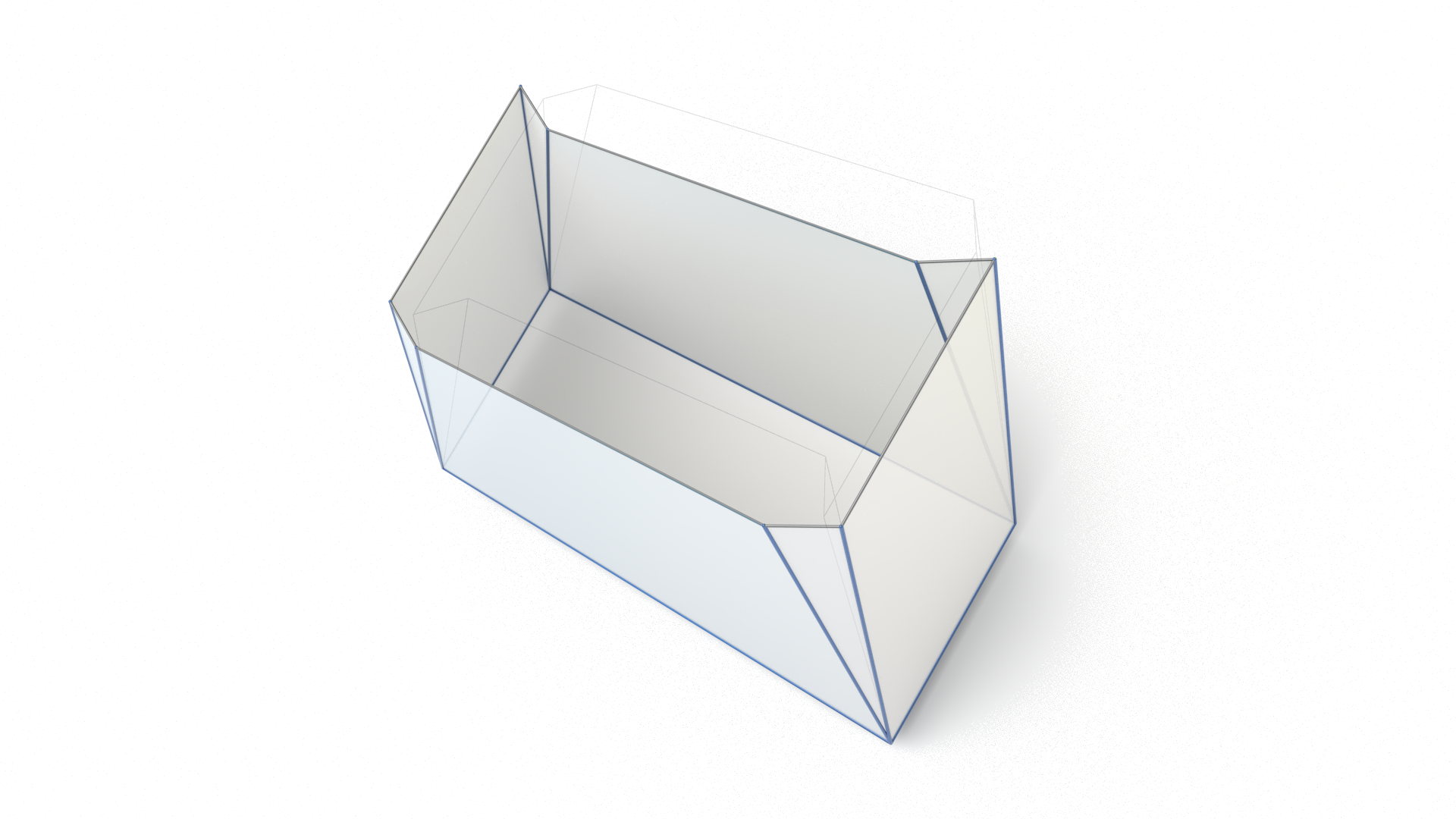}
    \caption{$t=\frac{3}{4}$}
    \label{exfig:pic002b}
  \end{subfigure}
  \caption{The polyhedron of equimodular elliptic type in Example~\ref{ex:ex2} at different values of parameter $t$. The sign ``$-$'' is chosen in each $\mp$ in~\eqref{eq:E.2}.}
  \label{exfig:pic002}
\end{figure}
\end{example}

\begin{proposition}\label{ex:prop2}
    The polyhedron from Example~\ref{ex:ex2} belongs to equimodular elliptic type and linear compound type (after switching one of the boundary strips), does not belong to any other classes in Izmestiev's classification, is flexible in $\mathbb{R}^3$, and a flexion is given by \eqref{eq:E.2}.
\end{proposition}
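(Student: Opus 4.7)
The plan mirrors the proof of Proposition~\ref{ex:prop1} and splits into three blocks: confirming equimodular elliptic type together with the flexion~\eqref{eq:E.2}, locating the polyhedron in the linear compound class after a strip switch, and excluding every remaining class in Izmestiev's list. As in the preceding examples, the bookkeeping is automated in a companion Mathematica notebook.

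For the first block I would start from the closed-form cosines and compute the parameters $\sigma_i$, $\overline{\alpha}_i,\ldots,\overline{\delta}_i$, $M_i$, $r_i$, $s_i$, $f_i$, $u_i$, $x_i$, $y_i$, $z_i$ by direct substitution. The sum $\delta_1+\delta_2+\delta_3+\delta_4=2\pi$ is immediate from $\delta_i=\pi/2$, and condition~\eqref{eq:N.0} is verified for every sign combination at every vertex. By construction the numerical values recover the seed data $u=1/2$, $z_1=1$, $z_2=4/3$, $z_3=3/2$, $z_4=2$, $y_1=1/2$, so the matching conditions~\eqref{eq:M.1.a}--\eqref{eq:M.1.e} hold tautologically and Proposition~\ref{main_th:prop1} reduces the equimodular elliptic type to the period condition~\eqref{eq:N.5}. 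I would check the latter by computing the modulus $k=1/\sqrt2$, the quarter periods $K=K'$, the four phase shifts $t_i$ via~\eqref{eq:N.9} and Table~\ref{tab:tpqsigma}, and exhibiting signs $e_1,e_2,e_3\in\{\pm1\}$ for which $t_1+e_1t_2+e_2t_3+e_3t_4\in\Lambda$. Substituting~\eqref{eq:E.2} into Bricard's equations~\cite[Eq.~(20)]{izmestiev-2017} yields a polynomial identity in $t$; since the right-hand sides of~\eqref{eq:E.2} are real and define four distinct dihedral angles for $t\in(1/\sqrt3,\sqrt2/\sqrt3)$, flexibility in $\mathbb{R}^3$ follows and~\eqref{eq:E.2} is a flexion.

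For the second block the key observation is the identity $\cot(\theta_1/2)\cdot\cot(\theta_3/2)=t\cdot(1/t)=1$ visible from~\eqref{eq:E.2}. In the notation of the proof of Proposition~\ref{ex:prop1} this reads $w_1 w_2=\mathrm{const}$, which is precisely the linear compound condition after a boundary strip switch (see~\cite[Section~3.5]{izmestiev-2017}); hence the polyhedron lies in the linear compound class in this sense. For the third block I would reuse the elimination arguments from Proposition~\ref{ex:prop1}: condition~\eqref{eq:N.0} disposes of the degenerate subclasses; the relation $\cos\beta_i\cos\delta_i=0\ne\cos\alpha_i\cos\gamma_i$, preserved under arbitrary strip switches, rules out the orthodiagonal class; the equality $M_i=1/2<1$ (also preserved by switching) rules out the conjugate-modular class; the visible flexion of every dihedral angle in~\eqref{eq:E.2} rules out the trivial class; and the explicit cosine list fails the identities defining the isogonal, linearly conjugate, and chimera types, which I would inspect case by case.

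The main obstacle is the period condition~\eqref{eq:N.5}. The seed values for $u$ and the $z_i$ were engineered in the construction precisely so that this condition holds, but making the verification rigorous requires a careful interval assignment for each $t_i$ via Table~\ref{tab:tpqsigma} together with explicit evaluation of the Jacobi delta amplitudes at $k=1/\sqrt2$. If a direct match in $\Lambda$ fails, the half-period symmetry of Lemma~\ref{plan_pr:lem2.6} and Remark~\ref{rem:half-symmetry} can be invoked to bring pairs of phase shifts into congruent positions modulo $2K$ before summing, which together with the classification of signs of $p_iq_i$ should close the argument.
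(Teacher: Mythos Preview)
Your proposal follows the same three-block structure as the paper and handles the linear compound identification and the exclusion of the remaining classes in essentially the same way. The one substantive difference is the verification of the period condition~\eqref{eq:N.5}. You propose to compute the four phase shifts $t_i$ directly from $\operatorname{dn} t_i=\sqrt{f_i}$ and Table~\ref{tab:tpqsigma}, then search for a sign pattern landing in $\Lambda$; you correctly flag this as the main obstacle and suggest fallback symmetries. The paper bypasses this computation entirely via Lemma~\ref{ex:lem2}: since all $\sigma_i<\pi$ and $p_iq_i\in\mathbb{R}_{>0}$, Table~\ref{tab:tpqsigma} places every $t_i$ in $(0,\mathbf{i}K')$, and the seed values were engineered so that $(f_1-1)(f_4-1)=(f_2-1)(f_3-1)=\tfrac12=1-M$. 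Hence
\[
k^2\operatorname{sn}^2 t_i\,\operatorname{sn}^2 t_{i+1}
=\frac{(1-\operatorname{dn}^2 t_i)(1-\operatorname{dn}^2 t_{i+1})}{k^2}
=\frac{(1-f_i)(1-f_{i+1})}{1-M}=1
\qquad(i=2,4),
\]
and Lemma~\ref{ex:lem2} gives $t_2+t_3=t_1+t_4=\mathbf{i}K'$, so $t_1-t_2-t_3+t_4=0\in\Lambda$. This is precisely the ``additional restriction on $z_i$ and $u$'' alluded to in the construction paragraph preceding Example~\ref{ex:ex2}. Your direct approach would work in principle, but the paper's identity turns the period check into a one-line algebraic verification rather than an inverse-elliptic-function computation. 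A minor point: the isogonal, linearly conjugate, and chimera classes are already excluded by condition~\eqref{eq:N.0}, so the separate case-by-case inspections you propose for them are unnecessary.
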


We need a simple lemma.

\begin{lemma}\label{ex:lem2}
Let $t_1$, $t_2 \in (0, \mathbf{i}K')$ and $k\in (0,1)$. Then $t_1 + t_2 = \mathbf{i}K'$ if and only if $k^2\operatorname{sn}^2 {t_1} \operatorname{sn}^2{t_2} = 1$.
\end{lemma}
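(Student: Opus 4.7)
The plan rests on the quasi-periodicity identity for the Jacobi sine,
\[
\operatorname{sn}(u+\mathbf{i}K',k)=\frac{1}{k\,\operatorname{sn}(u,k)},
\]
together with the imaginary-argument transformation $\operatorname{sn}(\mathbf{i}u,k)=\mathbf{i}\,\operatorname{sc}(u,k')$, both standard for Jacobi elliptic functions. I would invoke these without reproof.

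For the forward direction ($t_1+t_2=\mathbf{i}K'\Rightarrow k^2\operatorname{sn}^2 t_1 \operatorname{sn}^2 t_2=1$), I would substitute $t_2=\mathbf{i}K'-t_1$ into $\operatorname{sn}(t_2,k)$, apply the quasi-periodicity identity (with a change of sign coming from the oddness of $\operatorname{sn}$) to obtain $\operatorname{sn}(t_2,k)=-1/(k\,\operatorname{sn}(t_1,k))$, and then square. This is a one-line computation.

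For the converse, I would first parametrize $t_j=\mathbf{i}u_j$ with $u_j\in(0,K')$, since $(0,\mathbf{i}K')$ is by definition the open purely imaginary segment. The imaginary-argument formula then gives $\operatorname{sn}(t_j,k)=\mathbf{i}\,\operatorname{sc}(u_j,k')$ with $\operatorname{sc}(u_j,k')>0$ because both $\operatorname{sn}(\cdot,k')$ and $\operatorname{cn}(\cdot,k')$ are strictly positive on $(0,K')$. From $k^2\operatorname{sn}^2 t_1\operatorname{sn}^2 t_2 = 1$ I deduce $k\,\operatorname{sn}(t_1)\operatorname{sn}(t_2)=\pm 1$, and the sign is resolved by noting that $k\,\operatorname{sn}(t_1)\operatorname{sn}(t_2)=-k\,\operatorname{sc}(u_1,k')\operatorname{sc}(u_2,k')<0$, so the minus sign must hold. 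This rearranges to $\operatorname{sn}(t_2,k)=\operatorname{sn}(\mathbf{i}K'-t_1,k)$, using the quasi-periodicity identity again.

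The final step is injectivity: since $u\mapsto\operatorname{sc}(u,k')$ is strictly increasing from $0$ to $+\infty$ on $(0,K')$, the map $t\mapsto\operatorname{sn}(t,k)$ sends $(0,\mathbf{i}K')$ bijectively onto $\mathbf{i}(0,+\infty)$. Both $t_2$ and $\mathbf{i}K'-t_1$ lie in $(0,\mathbf{i}K')$, so the equality of their sines forces $t_2=\mathbf{i}K'-t_1$. The only subtlety, and the point I would be careful about, is precisely this sign determination when extracting the square root; everything else is standard manipulation of Jacobi elliptic identities.
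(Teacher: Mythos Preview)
Your proposal is correct and follows essentially the same approach as the paper: both directions hinge on the quasi-periodicity identity $\operatorname{sn}(u+\mathbf{i}K',k)=1/(k\,\operatorname{sn}(u,k))$ together with injectivity of $\operatorname{sn}$ on the imaginary segment. The paper's converse is terser, simply invoking that $\operatorname{sn}$ is odd and injective on $(-\mathbf{i}K',\mathbf{i}K')$, whereas you unpack this via the imaginary-argument transformation $\operatorname{sn}(\mathbf{i}u,k)=\mathbf{i}\,\operatorname{sc}(u,k')$ to pin down the sign and justify injectivity explicitly; this is more detailed but not a different route.
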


\begin{proof}
    ($\Rightarrow$) If $t_1 + t_2 = \mathbf{i}K'$ then $\operatorname{sn}{t_2} = \operatorname{sn}{(-t_1 + \mathbf{i}K')} = -\frac{1}{k \operatorname{sn}{t_1}}$, where $\operatorname{sn}{t_1} \neq 0$ as $t_1 \in (0, \mathbf{i}K')$. Thus $k^2\operatorname{sn}^2 {t_1} \operatorname{sn}^2{t_2} = 1$.

     ($\Leftarrow$) Since $\operatorname{sn}{t_2}$ is odd and injective on $(-\mathbf{i}K',\mathbf{i}K')$, it follows that
     the equation $k^2\operatorname{sn}^2 {t_1} \operatorname{sn}^2{t_2} = 1$ implies $t_2=\mathbf{i}K'-t_1$.
     %
     %
\end{proof}

\begin{proof}[Proof of Proposition~\ref{ex:prop2}]
In this a computer-assistant proof, we use a supplementary material (\texttt{Example3/helper3.nb})~\cite{Nurmatov2025Repo}, which computes all the quantities $r_i$, $s_i$, $f_i$, $\sigma_i$, $M_i$ and verifies every step below starting from expressing flat angles as inverse cosine function.

First, by 
Bricard's equations \cite[Eq.~(19)]{izmestiev-2017}
our example is realizable and flexible in $\mathbb{R}^3$ with the flexion given by \eqref{eq:E.2}, as verified in supplementary Mathematica notebook (\texttt{Example3/helper3.nb})~\cite{Nurmatov2025Repo}.

Second, we verify \eqref{eq:N.0} by considering all possible choices of signs in $\pm$. Then using \eqref{eq:N.2} we find $r_1 = r_2 = \frac{4}{3}$, $r_3= r_4 = \frac{5}{2}$, $s_1= s_4 = 3$, $s_2=s_3 = \frac{11}{6}$, $f_1 = 2$, $f_2=\frac{7}{4}$, $f_3 = \frac{5}{3}$, $f_4= \frac{3}{2}$, $M_1 = M_2 = M_3 = M_4 = M=\frac{1}{2}$, $\sigma_1 = \arccos{\left(-\frac{1}{\sqrt{2}}\right)}$, $\sigma_2 = \arccos{\left(-\frac{3\sqrt{7}}{2\sqrt{22}}\right)}$, $\sigma_3 = \arccos{\left(-\frac{2}{\sqrt{11}}\right)}$, and $\sigma_4 = \arccos{\left(-\frac{1}{\sqrt{5}}\right)}$. Thus \eqref{eq:N.0}, \eqref{eq:N.3}, and \eqref{eq:N.4} hold.

Third, observe that $\sigma_i < 180^\circ$ and $p_iq_i \in \mathbb{R}_{>0}$, see \eqref{eq:N.8}. Then by Table~\ref{tab:tpqsigma}, $t_i \in (0, \mathbf{i}K')$, for each $i=1, \ldots, 4$. Introduce $t_5:=t_1$. By \eqref{eq:N.7} and \eqref{eq:N.9}, we get $k^2\operatorname{sn}^2 {t_i} \operatorname{sn}^2{t_{i+1}} = \frac{(1 - \operatorname{dn}^2 {t_i})(1 - \operatorname{dn}^2 {t_{i+1}})}{k^2} = \frac{(1 - f_i)(1 - f_{i+1})}{1-M} = 1$ for $i=2, 4$. Then by Lemma~\ref{ex:lem2}, we have $t_2 + t_3 = \mathbf{i}K'$ and $t_1 + t_4 = \mathbf{i}K'$. Hence $t_1 - t_2 - t_3 + t_4 = 0$, and \eqref{eq:N.5} holds. Thus our example is of equimodular elliptic type.

Next, since $\cot{\frac{\theta_1}{2}} \cot{\frac{\theta_3}{2}}$ is constant it follows that our example belongs to linear compound class after switching the upper boundary strip. Thus it is in the intersection of linear compound and equimodular  elliptic classes.

Finally, analogously to the proof of Proposition~\ref{ex:prop1}, we check that this example does not fall into any of the other classes introduced by Izmestiev \cite{izmestiev-2017}, even after switching the boundary strips.
\end{proof}

\subsection{Numerical Examples} \label{sec-numeric}

\bluenew{In this subsection, we shift our perspective from the explicit generative approach to address the broader general case. Specifically, by treating the dihedral angles and flat angles of the base face as inputs, we reduce the problem to a system of eight equations in nine variables.}

To find concrete examples that satisfy the existence criterion in Proposition~\ref{main_th:prop1}, we employ a two-phase numerical search-and-verification procedure. The input consists of dihedral angles $\{\theta_i\}$ and flat angles $\{\delta_i\}$ with all angles in $(0,\pi)$ and $\sum_{i=1}^4 \delta_i = 2\pi$ (Eq.~\eqref{eq:M.1.0}), enforced by construction.

The unknowns are the parameters $(u,\,x_1,\,x_3,\,y_1,\,y_2,\,z_1,\,z_2,\,z_3,\,z_4)$
We first incorporate symmetry relations~\eqref{eq:M.1.a}--\eqref{eq:M.1.e} into~\eqref{eq:M.1.f} and~\eqref{eq:M.1.g}. To remove sign ambiguities, we square~\eqref{eq:M.1.f} and~\eqref{eq:M.1.g}. The resulting system has eight equations in the nine variables.
We therefore augment it with a dummy equation $0=0$
to obtain a square polynomial system. 

To explore the (generally) non-convex solution set, we adopt a multi-start strategy, solving the system 
from many randomized initial guesses. These seeds are not chosen blindly: they are drawn from admissible domains implied by our theory, in particular Lemmas~\ref{plan_pr:lem2} and \ref{plan_pr:lem2.5}, as well as additional heuristic inequalities that bound the dihedral angles of realizable polyhedra.
This significantly improves robustness and efficiency.

Each converged solution of the 
system is treated as a candidate and subjected to rigorous filtering. We discard non-physical ($u\ge 1$), non-convergent, or duplicate solutions. Survivors are required to admit real-valued angles, i.e., 
the right sides of expressions in Lemma~\ref{plan_pr:prop1} must lie in $[-1,1]$. Finally, each candidate is validated against the original (unsquared) equations~\eqref{eq:M.1.f} and~\eqref{eq:M.1.g}, 
as well as conditions~\eqref{eq:N.0} and
~\eqref{eq:N.5} for some choice of 
$e_j$.
At this step, it is convenient to treat $\varepsilon_i$ and $\sigma_i$ as additional variables and their definitions~\eqref{eq:N.2} as additional equations to be validated.
The 
resulting Python implementation is available in the supplementary material (\texttt{Algorithm/numerical\_search.py})~\cite{Nurmatov2025Repo}.

As for first example, we took as input
\begin{align*}
    \delta_1 &= 120^\circ, &\delta_2 &= 80^\circ,  &\delta_3 &= 85^\circ,  &\delta_4 &= 75^\circ,\\
\theta_1 &= 130^\circ, &\theta_2 &= 140^\circ, &\theta_3 &= 125^\circ, &\theta_4 &= 135^\circ.
\end{align*}
One admissible solution returned by our algorithm 
is presented in the following example; 
full-precision values (additional 12 decimal digits) are provided in the supplementary repository (\texttt{Example4})~\cite{Nurmatov2025Repo}.

\begin{example}[see Figure~\ref{exfig:pic003}]\label{ex:ex3}
Consider the polyhedron with flat angles
\begin{align*}
    &\alpha_1 = 91.32...^\circ, &&\alpha_2 = 115.75...^\circ,  &&\alpha_3 = 31.19...^\circ,  &&\alpha_4 = 28.19...^\circ,\\
    &\beta_1 = 27.53...^\circ, &&\beta_2 = 29.33...^\circ,  &&\beta_3 = 113.66...^\circ,  &&\beta_4 = 107.67...^\circ, \\
    &\gamma_1 = 103.21...^\circ, &&\gamma_2 = 109.78...^\circ,  &&\gamma_3 = 89.61...^\circ,  &&\gamma_4 = 121.75...^\circ,\\
    &\delta_1 = 120^\circ, &&\delta_2 = 80^\circ,  &&\delta_3 = 85^\circ,  &&\delta_4 = 75^\circ,
\end{align*}
and the cotangents of dihedral half-angles
\begin{align*}\label{eq:E.3}\tag{17}
\left.
\begin{aligned}
    &\cot{\frac{\theta_1}{2}}  =t,\\
    &\cot{\frac{\theta_2}{2}}  = \frac{0.99... \, t \mp 0.28... \,\sqrt{-4.34...\,t^4 + 8.16...\, t^2  -1}}{t^{2} + 0.47...}, \\
    &\cot{\frac{\theta_3}{2}}  = \frac{-0.11...\, t
      \mp 0.98... \,\sqrt{-4.34...\,t^4 + 8.16...\, t^2  -1}}{t^{2}-1.74...}, \\
    &\cot{\frac{\theta_4}{2}}  = \frac{3.13... \, t
      \mp 0.75... \,\sqrt{-4.34...\,t^4 + 8.16...\, t^2  -1}}{t^{2} + 1.93...},
\end{aligned}
    \right.
\end{align*}
where $t$ 
lies in the range $\left(0.36..., 1.32...\right)$
and the signs in $\mp$ agree.



\begin{figure}[htbp!]
    \centering
    \vspace{2mm}
    \includegraphics[trim={11.5cm 8.5cm 8.5cm 10.5cm},clip, scale=0.15]{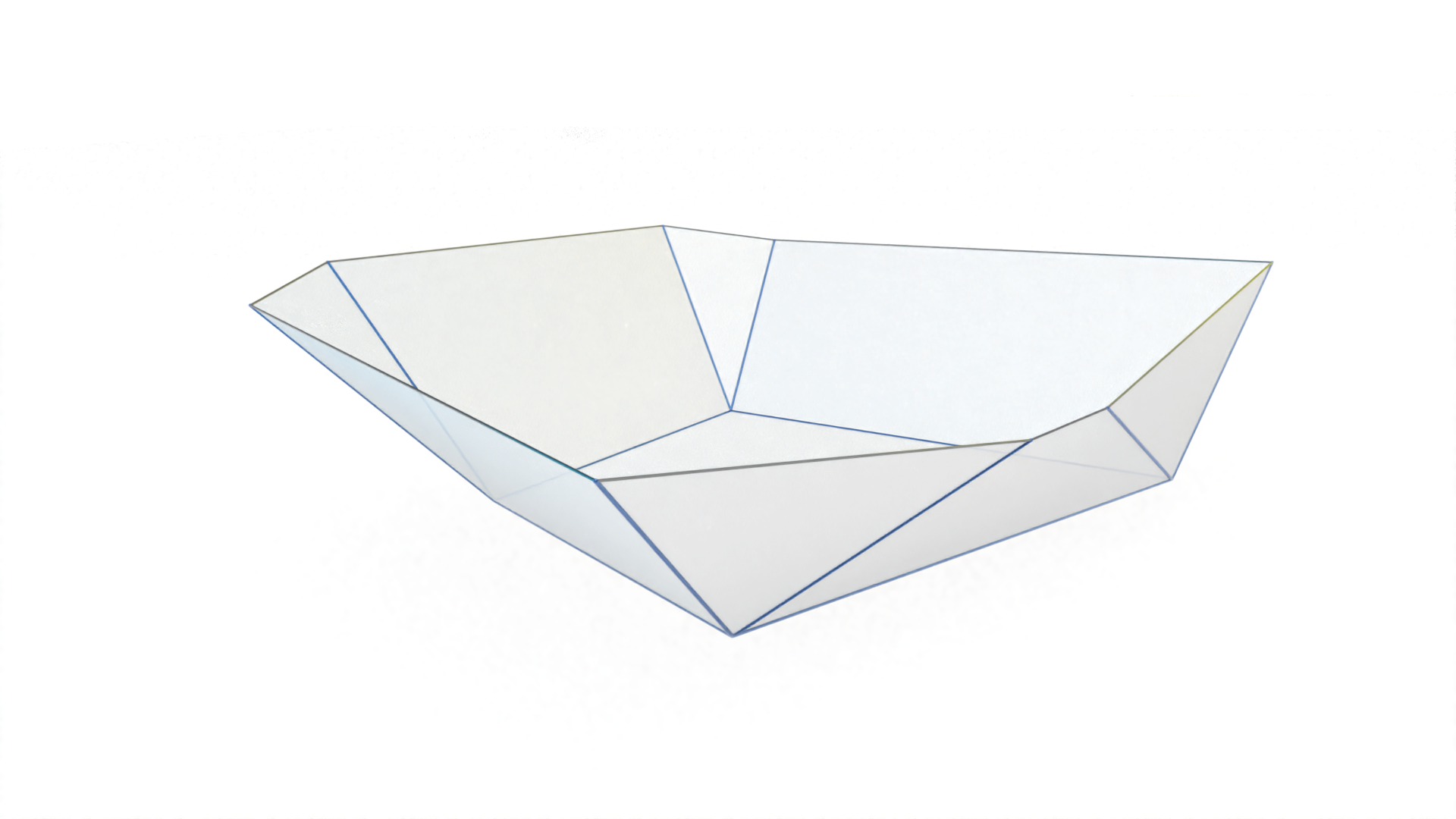}
    \vspace{2mm}
    \caption{Numerical example of a polyhedron of equimodular elliptic type at the state $\theta_1 = 130^\circ$, $\theta_2 = 140^\circ$,  $\theta_3 = 125^\circ$, $\theta_4 = 135^\circ$. See Example~\ref{ex:ex3}, where the minus sign is chosen in each $\mp$.}
    \label{exfig:pic003}
\end{figure}
\end{example}

To verify that this example has the equimodular elliptic type  (up to some tolerance), we processed it with the code in the supplementary material (\texttt{Example4/helper4.nb})~\cite{Nurmatov2025Repo}, following the same approach as in Examples~\ref{ex:ex1}--\ref{ex:ex2}.
Condition~\eqref{eq:N.0} was verified by considering all possible choices of signs in $\pm$. 
We verified approximate equalities $M_1=M_2=M_3=M_4 = M = 0.92...$, $r_1=r_2 = 1.13...$, $r_3=r_4=0.68...$, $s_1=s_4=1.17...$, $s_2=s_3=1.09...$ with a further 10--14 digits available in the supplement. 
Period condition~\eqref{eq:N.5} was validated numerically to a tolerance of $10^{-15}$. 
Bricard's equations~\cite[Eq.~(19)]{izmestiev-2017} were satisfied for~\eqref{eq:E.3} to a tolerance of $10^{-12}$. 
In a sense, this means that the example is flexible in \(\mathbb{R}^3\) to tolerance $10^{-12}$.

Finally, we confirmed that the example belongs \emph{exclusively} to the equimodular elliptic class (even after switching boundary strips),
by the 
approach used for Example~\ref{ex:ex1}; see (\texttt{Example4/helper4.nb})~\cite{Nurmatov2025Repo}.

In all preceding examples, we have \(M<1\). To complement these, we now consider a one with \(M>1\). While, for the same input data as in Example~\ref{ex:ex3}, our algorithm 
already returns several solutions with \(M>1\), we chose to test the effectiveness of the procedure on a different dataset. Specifically, we took as input
\begin{align*}
\delta_1 &= 60^\circ, & \delta_2&= 115^\circ, & \delta_3 &= 80^\circ, &  \delta_4 &= 105^\circ,\\
\theta_1 &= 120^\circ, & \theta_2 &= 140^\circ, & \theta_3 &= 110^\circ, &\theta_4 &= 130^\circ
\end{align*} and obtained an admissible solution reported in Example~\ref{ex:ex4}. Full-precision values (additional 12 decimal digits) are provided in the supplementary repository (\texttt{Example5})~\cite{Nurmatov2025Repo}.

\begin{example}[See Figure~\ref{exfig:pic004}]\label{ex:ex4}
Consider the polyhedron with flat angles
\begin{align*}
    &\alpha_1 = 26.20...^\circ, &&\alpha_2 = 16.16...^\circ,  &&\alpha_3 = 134.65...^\circ,  &&\alpha_4 = 117.95...^\circ,\\
    &\beta_1 = 82.24...^\circ, &&\beta_2 = 130.87...^\circ,  &&\beta_3 = 34.44...^\circ,  &&\beta_4 = 49.52...^\circ, \\
    &\gamma_1 = 21.94...^\circ, &&\gamma_2 = 18.85...^\circ,  &&\gamma_3 = 145.36...^\circ,  &&\gamma_4 = 149.02...^\circ,\\
    &\delta_1 = 60^\circ, &&\delta_2 = 115^\circ,  &&\delta_3 = 80^\circ,  &&\delta_4 = 105^\circ,
\end{align*}
and the cotangents of dihedral half-angles
\begin{align*}\label{eq:E.4}\tag{18}
\left.
\begin{aligned}
    &\cot{\frac{\theta_1}{2}}  =t,\\
    &\cot{\frac{\theta_2}{2}}  = \frac{-0.21... \, t \pm 0.29... \,\sqrt{-2.70... \, t^4 + 2.67... \, t^2 + 1}}{t^2 + 0.33...}, \\
    &\cot{\frac{\theta_3}{2}}  = \frac{0.54...\, t
      \pm 0.31... \,\sqrt{-2.70... \, t^4 + 2.67... \, t^2 + 1}}{t^2 + 0.67...}, \\
    &\cot{\frac{\theta_4}{2}}  = \frac{-0.39... \, t
      \pm 0.43... \,\sqrt{-2.70... \, t^4 + 2.67... \, t^2 + 1}}{t^2 + 0.35...},
\end{aligned}
    \right.
\end{align*}
where $t$ is a parameter in the range $\left(0, 1.13...\right)$ and the signs in $\pm$ agree.
\end{example}

\begin{figure}[htbp!]
    \centering
    \includegraphics[trim={9cm 9cm 6.3cm 9.5cm},clip, scale=0.15]{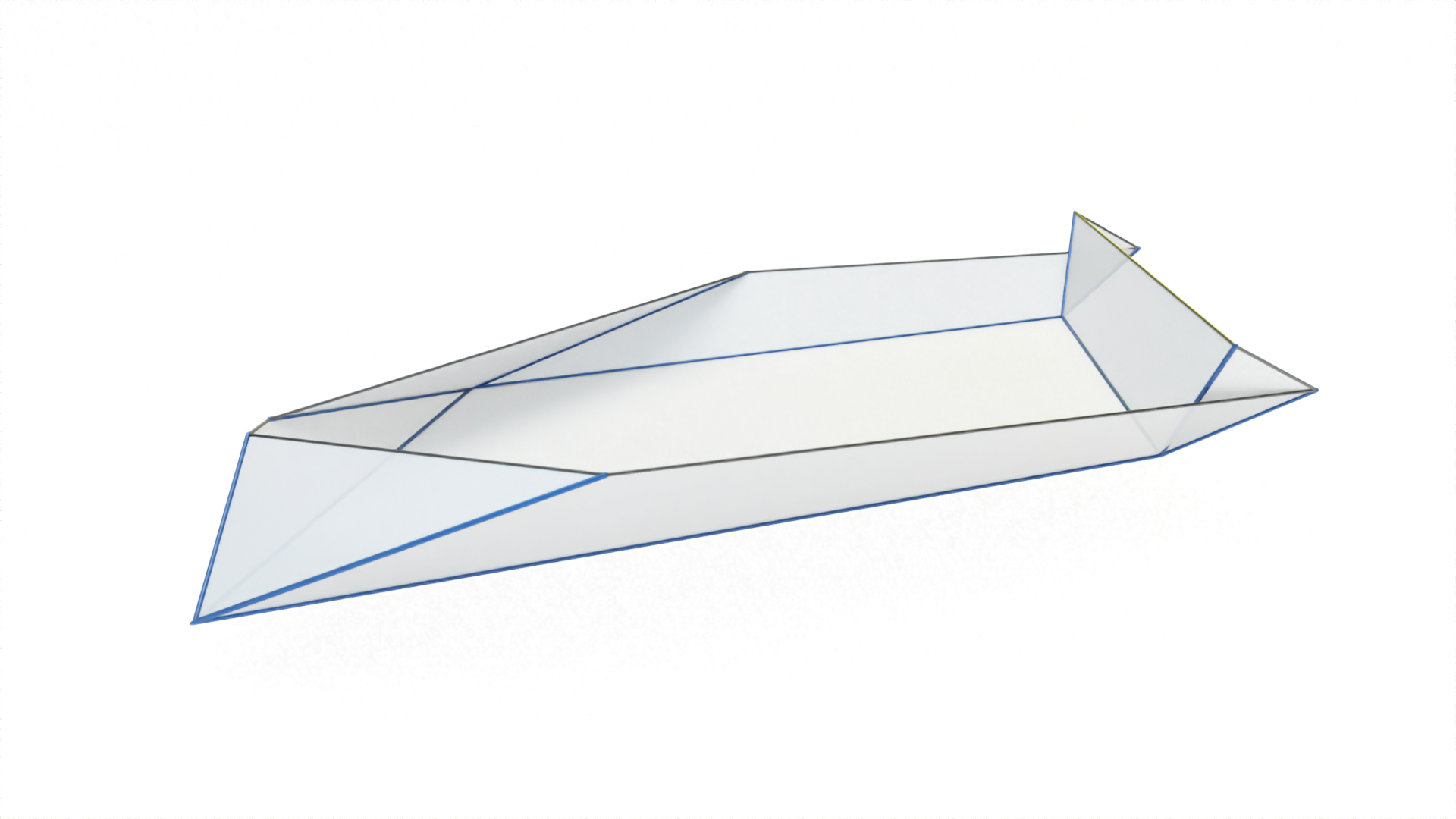}
    \vspace{2mm}
    \caption{The numerical example of a polyhedron of equimodular elliptic type in Example~\ref{ex:ex4} at the state $\theta_1 = 120^\circ$, $\theta_2 = 140^\circ$,  $\theta_3 = 110^\circ$, $\theta_4 = 130^\circ$. The sign ``$+$'' is chosen in each $\pm$ in~\eqref{eq:E.4}.}
    \label{exfig:pic004}
\end{figure}

To verify that this example has equimodular elliptic type (to numerical tolerance), we processed it with the supplementary Mathematica notebook (\texttt{Example5/helper5.nb})~\cite{Nurmatov2025Repo}, following the same approach as in previous examples. Condition~\eqref{eq:N.0} was verified by considering all possible choices. 
We verified the approximate equalities $M_1=M_2=M_3=M_4 = M = 1.22...$, $r_1=r_2 = 0.71...$, $r_3=r_4=0.88...$, $s_1=s_4=0.58...$, $s_2=s_3=0.80...$ with a further 11--13 digits available in the supplement.
Period condition~\eqref{eq:N.5} was validated numerically to a tolerance of \(10^{-15}\). In addition, Bricard's equations 
were satisfied for~\eqref{eq:E.4} to a tolerance of \(10^{-12}\).

Furthermore, this example does \emph{not} belong to the conjugate-modular class because $M_1 = M_2=M_3=M_4=M$ and $M\neq 2$; switching boundary strips does not affect 
\(M_i\) and thus does not change this conclusion. As in Example~\ref{ex:ex1}, we verified that the example does \emph{not} belong to any other classes 
(even after switching boundary strips); see (\texttt{Example5/helper5.nb})~\cite{Nurmatov2025Repo}.

\begin{remark}\label{ex:rem1}
In our simulations, Examples~\ref{ex:ex2}--\ref{ex:ex4} are non-self-intersecting, if we choose the upper sign in each $\mp$ and $\pm$,
and become self-intersecting, otherwise. Reproducible Python scripts are provided in the example folders of the supplementary repository~\cite{Nurmatov2025Repo}.
\end{remark}

\tmpcomment

\subsection{Small-scale Prototype}

We show that the proposed mechanisms can be successfully realized in practice by building the small-scale (about $150\times150\times100$ mm${}^3$) prototype of our QS-net shown in Figure \ref{dfig:pic001}, which has been previously presented in Figure \ref{exfig:pic001}. Unlike Maleczek et al. \cite{maleczek2022rapid} and others, we opted against rapid prototyping and the use of plastics, as these materials can introduce significant tolerances and undesired flexibility. Instead, we selected stainless steel for the fabrication of our model. Its motion is purely a consequence of its geometry, with no flexibility, deformation, or loose fitting of the material contributing to movement.

The mechanism is constructed from a 0.5 mm thick laser-cut hard-rolled stainless steel sheet (1.4404), CNC-cut cold-drawn stainless steel seamless capillary pipe (1.4301) with an outer diameter of 2.5 mm and a wall thickness of 0.5 mm, and straightened carbon spring steel wire (1.1200) with a diameter of 1.5 mm. The joint knuckles were fusion pulse TIG-welded to the edges of the laser-cut quadrilaterals. The fit between the knuckle and the pin at the joints is H10/h8 (knuckle inner diameter 1.5 mm -0/+50 µm, pin diameter 1.5 mm +0/-14 µm). The resulting mechanism is both stiff and smooth, with only a single degree of motion and no backlash or wiggle.

\begin{figure}[htbp!]
    \centering
    \includegraphics[width=\textwidth]{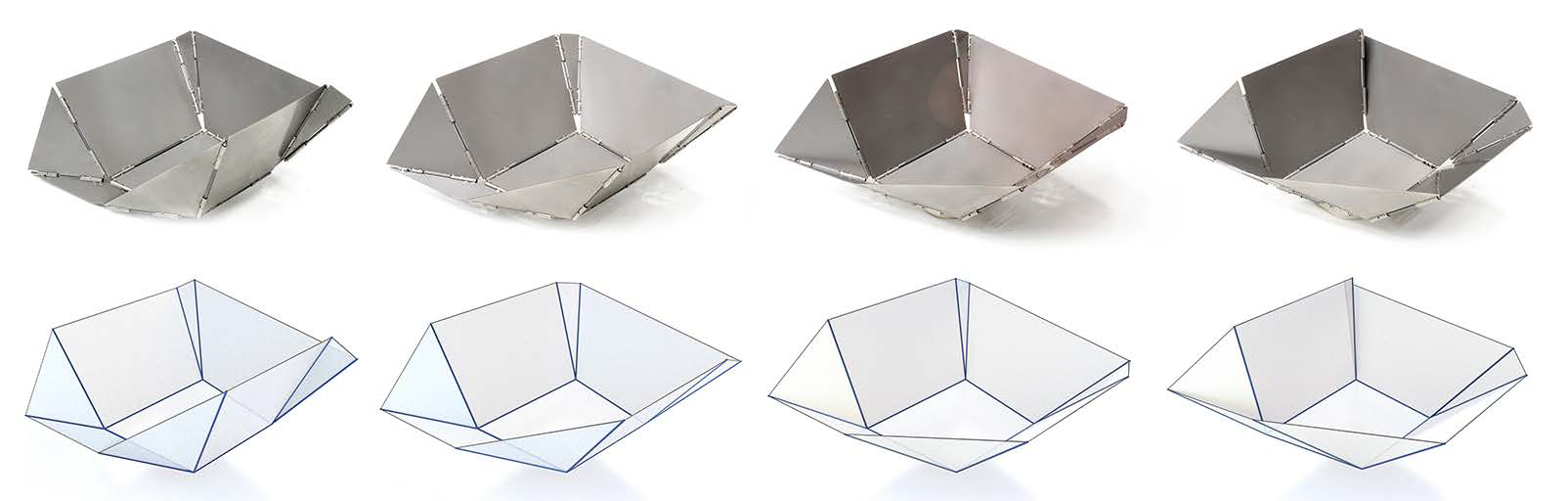}
    \caption{Illustration of several configurations of our fabricated flexible QS-net. Photos (top) and corresponding renderings (bottom) are shown.}
    \label{dfig:pic001}
\end{figure}

\endtmpcomment 

\section{Discussion}
\label{sec:disc}
In this work, we fill a notable gap in the classification of flexible Kokotsakis polyhedra with quadrangular base by providing the first \emph{explicit} constructions of the \emph{equimodular elliptic} type -- most directly via Theorem~\ref{main_th:th1} (the QS-net) -- and by deriving an \emph{explicit algebraic characterization}
(Proposition~\ref{main_th:prop1}) that links flat and dihedral angles through the variables \((u,x_i,y_i,z_i)\) and the relations~\eqref{eq:M.1.a}--\eqref{eq:M.1.g}. Although the definition of quasi-symmetric nets is elementary and direct, the path to this definition was via 
the implementation and analysis of the more complicated algebraic characterization.

Our results complement Izmestiev's algebraic classification~\cite{izmestiev-2017}, which identified this type but did not provide concrete realizations. Beyond existence, we establish flexibility in \(\mathbb{R}^3\) by providing closed-form flexions for analytic examples (Examples~\ref{ex:ex1}--\ref{ex:ex2}), presenting numerical instances validated to tight tolerances (Examples~\ref{ex:ex3}--\ref{ex:ex4}), and giving systematic evidence that these instances 
lie \emph{exclusively} within the equimodular elliptic class even after switching the boundary strips (Examples~\ref{ex:ex1}, \ref{ex:ex3}--\ref{ex:ex4}). Importantly, our examples cover both cases \(M<1\) and \(M>1\), indicating the breadth of realizable behavior within this class.
Across our examples we observed both non-self-intersecting motion 
and self-intersection 
(see Remark~\ref{ex:rem1}).


For the numerical results, Bricard's equations~\cite[Eq.~(19)]{izmestiev-2017} are satisfied to \(10^{-12}\), period condition~\eqref{eq:N.5} to \(10^{-15}\), moduli condition~\eqref{eq:N.3} to \(10^{-14}\) or better, amplitudes condition~\eqref{eq:N.4} to \(10^{-12}\) or better, and condition~\eqref{eq:N.0} is satisfied with even better precision.
These tolerances are more than sufficient for constructing physical prototypes. All scripts and notebooks are released to support full reproducibility~\cite{Nurmatov2025Repo}.

From a CAD perspective, Theorem~\ref{main_th:th1} offers an immediate source of simple, closed-form designs (QS-nets) with prescribed flat-angle relations. More generally, Proposition~\ref{main_th:prop1} is directly actionable: it reduces the synthesis of mechanisms of equimodular elliptic type to solving a structured algebraic system with simple symmetry constraints~\eqref{eq:M.1.a}--\eqref{eq:M.1.e} and two core relations -- a face-compatibility equation for \(\delta_i\)~\eqref{eq:M.1.f} and a kinematic equation for the dihedral angles~\eqref{eq:M.1.g}. This enables (i) fast feasibility checks during interactive modeling, (ii) \emph{inverse} design by prescribing partial angle data and solving for remaining parameters, and (iii) automatic rejection of designs drifting into non-elliptic regimes (violations of~\eqref{eq:N.0}) or non-physical solutions. The numerical pipeline (Section~\ref{sec-numeric}) is tailored for this workflow: it uses theory-informed sampling (Lemmas~\ref{plan_pr:lem2} and~\ref{plan_pr:lem2.5}) 
to initialize solvers; candidates are verified against the \emph{unsquared} equations and consistent sign patterns \(\{\varepsilon_i\},\{e_j\}\); and the period condition~\eqref{eq:N.5} is certified at prescribed tolerances. Together with our constructive routines for coordinate formulae, edge and angle generation, and construction and interactive flexion (see \texttt{Appendix/appendix.pdf}~\cite{Nurmatov2025Repo}), this yields a robust pipeline from CAD to physical prototype.

Future work will address the open problems above, including the following directions:
to extend the theory and algorithms from \(3\times 3\) nets to general \(m\times n\) nets; to investigate whether all polyhedra of equimodular elliptic type are flexible in \(\mathbb{R}^3\) for some range of their dihedral angles and, if so, to derive explicit 
flexion formulae; to seek simpler reformulations of period condition~\eqref{eq:N.5} in terms of elementary functions for faster verification; and to explore potential hidden symmetries within the equimodular elliptic type. 
\bluenew{Regarding the extension to general \(m\times n\) nets, a primary symbolic obstacle in scaling these explicit characterizations is the sheer complexity of the algebraic compatibility conditions. For example, imposing periodic boundary conditions on flat angles to form a quad-mesh requires solving highly coupled systems of Bricard-type equations. Alternatively, preliminary combinatorial approaches -- analogous to the ``building blocks'' in~\cite{dieleman2020} -- could be used to assemble larger patterns by treating our explicitly characterized \(3\times 3\) flexible meshes as local kinematic modules. We believe this interplay between local algebraic design and global combinatorial assembly provides a valuable direction for both the origami and discrete differential geometry communities.}

\section{Conclusion}
\label{sec:concl}
To summarize, we addressed a long–standing gap in the theory of flexible mechanisms by providing the first explicit constructions and an algebraic characterization 
of quadrangular Kokotsakis polyhedra of the equimodular elliptic type. Building on Izmestiev’s abstract framework, we translated the classification into a polynomial 
system that directly links flat and dihedral angles. This system -- formulated in Proposition~\ref{main_th:prop1} and complemented by the constructive Theorem~\ref{main_th:th1} -- serves as the central theoretical contribution and enables a rigorous, constructive pathway to identify members of this class.

On this foundation, we showed conclusively that the equimodular elliptic class is 
physically realizable. We presented the first closed-form examples together with numerical instances discovered via a robust search-and-verify pipeline. For all of these, we established flexibility in \(\mathbb{R}^3\), provided motion parameterizations, and verified that most of them belong \emph{exclusively} to the equimodular elliptic class, even after switching the boundary strips. The examples include non-self-intersecting realizations and span both regimes \(M<1\) and \(M>1\), underscoring the breadth of attainable behavior.

Beyond these foundational contributions, we have developed and validated a complete computational pipeline that bridges the gap between theory and practice. The numerical search-and-verification algorithm, informed by our analytical results, provides an effective tool for discovering new mechanisms and can be integrated directly into CAD workflows for the inverse design and simulation of flexible structures.

The presented results resolve several open questions and simultaneously lay the groundwork for new avenues of inquiry, such as extending the framework to larger nets. In providing the first tangible examples and a clear pathway for their design and construction, this research enriches the classification of flexible polyhedra and offers a new family of mechanisms for applications in architectural geometry, robotics, and deployable systems. In future work along different trajectories, one could investigate the dynamic behavior and stability of these flexible mechanisms under external loads, drawing on techniques such as exponential integration \cite{10.1145/2508462,10.1145/2988458.2988464} for stiff problems \cite{MICHELS2015136,10.1145/3072959.3073706}. There is also potential to explore the application of graph learning \cite{NEURIPS2021_26337353}. Also, the exploration of flexible molds based on such mechanisms \cite{10.1145/3687906} could lead to novel fabrication techniques.

\section*{Acknowledgements}
This research has been supported by the baseline funding of the KAUST Computational Sciences Group. The authors are grateful to Ivan Izmestiev for useful discussions.






\bibliographystyle{elsarticle-num}
\bibliography{references}


\end{document}